\newtheorem{theorem}{Theorem}[section]
\newtheorem{lemma}[theorem]{Lemma}
\newtheorem{proposition}[theorem]{Proposition}
\newtheorem{corollary}[theorem]{Corollary}
\newtheorem{definition}[theorem]{Definition}
\newtheorem{remark}[theorem]{Remark}
\newtheorem{example}[theorem]{Example}
\newtheorem{Assumption}[theorem]{Assumption}
\newtheorem{notation}[theorem]{Notation}
\begin{document}

\title{Uniqueness of the invariant measure  and asymptotic stability for the 2D Navier-Stokes equations with multiplicative noise}
\author{Benedetta Ferrario\thanks{Dipartimento di Scienze Economiche e Aziendali, Universit\`a di Pavia, 27100 Pavia, Italy.
E-mail: \texttt{benedetta.ferrario@unipv.it}}
\and Margherita Zanella\thanks{Department of Mathematics, Politecnico di Milano,
Via E.~Bonardi 9, 20133 Milano, Italy.
E-mail: \texttt{margherita.zanella@polimi.it}}}

\maketitle

\begin{abstract}
We establish the uniqueness and  the asymptotic stability of the invariant measure for the two-dimensional Navier-Stokes equations driven by a multiplicative noise which is either bounded or  with a sublinear or a linear growth. We work on an “effectively elliptic” setting, that is we require that the range of the covariance operator contains the unstable directions. 
We exploit the generalized asymptotic coupling techniques of \cite{GHMR17} and \cite{KS}, used by these authors for the stochastic Navier-Stokes equations with additive noise. 
Here  we show how these methods are flexible enough to deal with multiplicative noise as well. 
A crucial role in our argument is played by the Foias-Prodi estimate in expected valued, which has
a different form (exponential or polynomial decay) according to the growth condition of the multiplicative noise.
\end{abstract}

\noindent
{\textbf{Keywords:} Two dimensional stochastic Navier-Stokes equations, multiplicative noise, invariant measure, generalized coupling method, mixing, Foias–Prodi estimate in expected value.}
\\
{\bf MSC}:  
35Q30,  
35R60, 
60H30, 
60G10, 
60H15. 

\tableofcontents

\section{Introduction}

In the last decades there have been a large number of papers on the subject of ergodicity for stochastic partial differential equations (SPDEs), 
see for instance \cite{CIME} \cite{KS_BOOK}, \cite{noi}, \cite{noi_2}, \cite{Sca_Zan} and the references therein. The large majority of the works concerns SPDEs driven by an additive stochastic forcing term, whereas the papers dealing with multiplicative-type noises are much scarcer.

In \cite{GHMR17} Glatt-Holtz, Mattingly and Richards 
 identify an intuitive and conceptually simple framework for proving the uniqueness of the invariant measure by a generalized asymptotic coupling technique. 
This approach has been developed in many other papers; we refer also  to \cite{HMS11} by   Hairer, Mattingly   and Scheutzow 
 and to  \cite{BKS} by  Butkovsky, Kulik and Scheutzow. 
In \cite{GHMR17} many examples of  PDEs driven by an additive noise are considered, for which this framework led to streamlined proofs of uniqueness of the invariant measure. The  main thread between these systems is the existence of a finite number of determining modes (low modes) and a sufficiently rich stochastic forcing to ensure that the low modes are excited. 
This is usually referred as the “effectively elliptic” setting, where all of the presumptively unstable directions
are stochastically forced.
The central idea of the method in \cite{GHMR17} is to introduce a suitable shift in the driving Wiener process to force solutions, which start at different initial conditions, together asymptotically as time goes to infinity. 
For strongly dissipative dynamical systems, in the spirit of \cite{FP}, it is usually enough to control a finite number of unstable directions by introducing a finite-dimensional shift and requiring a sufficiently rich stochastic forcing to ensure that the
unstable modes are excited. 

Starting from these results, 
in \cite{KS} Kulik and Scheutzow exploit  this technique  to prove for the same SPDE's considered in  \cite{GHMR17} an 
 asymptotic stability result too. Moreover the technique of  \cite{GHMR17} for the uniqueness of the invariant measure is improved in \cite{KS}.  In particular Kulik and Scheutzow  still introduce a  control similar to the one considered in \cite{GHMR17} but they have to drop the localization term considered in \cite{GHMR17} (see our Section \ref{uniq_inv_mea_sec} for the details).

These methods  have been successful to prove ergodic properties of the Navier-Stokes equations driven by an \textit{additive} noise. The main aim of our work is to show that those methods are flexible enough to also deal with noises of multiplicative type so to prove uniqueness of the invariant measure and asymptotic stability in an effectively elliptic setting.
To the best of our knowledge, \textit{generalized asymptotic coupling} techniques have so far been used to study the ergodic properties of SPDEs driven by multiplicative-type noises only in the case of delayed equations (see e.g. \cite{HMS11} and \cite{BKS}).
Let us point out that there are works that address ergodic problems for SPDEs with a multiplicative-type noise by different techniques, see e.g. \cite{Oda2008}, \cite{M_mult}, \cite{KS02} and \cite{DonPen}. In particular \cite{Oda2008} and \cite{M_mult} use \textit{coupling} techniques to study the long time behavior of strongly dissipative SPDEs (Navier-Stokes and Ginzburg-Landau) driven by a \textit{bounded} multiplicative noise. In addition to the uniqueness of the invariant measure they also prove the exponential convergence to it in a effectively elliptic setting.
Differently from \cite{Oda2008} and \cite{M_mult} here we deal with more general noises: the covariance operator is  either bounded or satisfies a sublinear or a linear growth condition. 

Therefore in this work we  focus on the stochastic two-dimensional Navier-Stokes equations, but these methods  could be exploited to deal with different types of (strongly dissipative) SPDEs as well.
The Navier-Stokes (NS) equations, considered here with Dirichelt boundary conditions, describe the time evolution of an incompressible fluid and are given by 
\begin{equation}
\label{NS}
\begin{cases}
\partial_t u(t,x)+ \left[-\nu \Delta u(t,x) + (u(t,x)\cdot \nabla)u(t,x)+ \nabla p(t,x)\right]{\rm d}t=f (x)\,{\rm d}t &+G(u(t,x))\partial_t W(t,x),
\\
\nabla \cdot u(t,x)=0, & x \in D, t>0,
\\
u(t,x)=0, & x \in \partial D, t>0,
\\
u(0,x)=u_0(x), & x \in D.
\end{cases}
\end{equation}
Here the unknowns, for any   time $t>0$ and position $x\in D$, are the velocity vector $u(t,x)$  and the scalar pressure $p(t, x)$;
the data are the kinematic viscosity  $\nu>0$ of the fluid, the  initial velocity $u_0$, 
the  deterministic external force $f$  and the  random external force depending on the Wiener process $W$
 and a operator $G$.  We assume $D$ to be an open bounded domain of $\mathbb{R}^2$ with regular boundary.

In the spirit of \cite{GHMR17} and \cite{KS} and as done in \cite{Oda2008} and \cite{KS02} (see also the references in these papers), 
we prove the uniqueness of the invariant measure for system \eqref{NS} and its asymptotic stability under the main requirement on the noise to be non degenerate in the unstable directions, that is, we require  the image of the covariance operator of the noise to contain
a finite number of low modes, corresponding to the 
 the unstable modes.
 A crucial tool is  an estimate in the same spirit as that obtained by    Foias and Prodi 
\cite{FP} for the deterministic Navier-Stokes equations. 
For the stochastic Navier-Stokes equations the Foias-Prodi type estimates  have been proved so far only with an additive or a  bounded multiplicative noise. What we indeed prove is a   Foias-Prodi  estimate  in expected value, showing that 
 a  finite dimensional noise, when chosen in a proper way, allows to synchronize (in the mean)   any two solutions in the limit as $t \rightarrow \infty$. 
 This requires a generalized coupling, obtained by means of a control acting on a finite number of low modes.
 
 It might be useful to revise that in  the literature the Foias-Prodi estimates appear in both forms: pathwise or  
in expectation. When the SPDE has a strong dissipation and an additive noise, then the Foias-Prodi estimate can be proved pathwise. Otherwise, when there is a weak dissipation (e.g.: a damping term $\nu u$ instead of the  Laplacian $-\nu \Delta u$ appearing in \eqref{NS}) or the noise is multiplicative, the Foias-Prodi estimate can be proved in the mean. 
 For these results we refer to   \cite{Odasso}  for  the  nonlinear weakly damped Schr\"odinger equation with additive noise and to  \cite{GHMR21} for  the  weakly damped KdV equation with additive noise; and to 
\cite{Oda2008}  for the Navier-Stokes equations with  a bounded multiplicative noise.

Following the intuition of \cite{Oda2008} we derive the Foias-Prodi estimates in expected value for the Navier-Stoeks equations \eqref{NS} (although formulated in a different form than in \cite{Oda2008} in the case of a bounded noise) and show that they are in fact the crucial ingredient to  readapt  the  \textit{generalized coupling} arguments of \cite{GHMR17} and \cite{KS} to infer uniqueness of the invariant measure and asymptotic stability in presence of multiplicative-type noises. 
As anticipated above, denoting by $P_N(H)$ the subspace spanned by the first $N$ modes and assuming that $N$ is large enough to contain the 
unstable modes, this technique  requires that the range of $G(u)$ contains $P_N(H)$. 
In addition, when the operator $G$ in front of the Wiener process has a linear growth, it will also be necessary to impose that the  viscosity coefficient $\nu$ somehow balances the intensity of the multiplicative part of the noise. In this case, the existence of the invariant measure, its uniqueness and its asymptotic stability require gradually to strengthen this condition on $\nu$. 
We obtain different type of Foias-Frodi estimates depending on the assumptions we make on the noise. In the case of a bounded noise we get an exponential decay while in the case of sublinear or linear growth noise we get a polynomial decay. The substantial difference in the latter two cases is that in the case of a sublinear growth noise the time decay goes as $t^{-p}$, when $t\to+\infty$, for  an arbitrary power $p>0$; in the case of a linear growth noise the range of admissible  parameters $p$ is related to the viscosity coefficient and to the intensity of the multiplicative part of the noise. 
 
The Foias-Prodi estimates we obtain are an interesting result in themselves, 
and we hope to use them also to obtain quantitative mixing results. This problem is currently under investigation.

We conclude by briefly summarizing the content of the paper. In Section \ref{setting_sec} we introduce the mathematical setting, state the assumptions and the main results. In Section \ref{prel_res_sec} we recall some results concerning well-posedness of the stochastic Navier-Stokes equations  \eqref{NS}. 
In Section \ref{Foias-Prodi_section} we derive the Foias-Prodi type estimates in expected value. Section \ref{uniq_inv_mea_sec} provides the proof of the existence, uniqueness and asymptotic stability of the invariant measure. Some remarks are collected in Section \ref{final_rem_sec}. 
In Appendix \ref{app_A} are collected some apriori estimates and in Appendix \ref{dim_lem} the proof of a technical lemma  is provided.


\section{Setting and main results}
\label{setting_sec}
 In this Section we fix the notations, explain the assumptions, formulate the framework of our problem and state the main results.
 
 In the sequel, given two Banach spaces $E$ and $F$, we denote by $L(E,F)$ the space of all linear bounded
operators $B: E\to F$ and abbreviate $L(E):=L(E,E).$  If $H$ and $K$ are separable Hilbert spaces, we employ the symbol
$L_{HS}(H, K)$ for the space of Hilbert-Schmidt operators from $H$ to $K$. If $(A, \mathcal{A}, \mu)$ is a finite measure space, we denote by $L^p(A,E)$ 
the space of $p$-Bochner integrable functions, for any $p \in [1, \infty)$. 
Given the Hilbert space $H$, for a fixed $T>0$, by $C([0,T];H)$ we denote the space of strongly continuous functions from $[0,T]$ to $H$ whereas $C_w([0,T];H)$ stands for the space of all continuous functions from the interval $[0,T]$ to  the space $H$ endowed with the weak topology.
\newline
If functions  $a,b\ge 0$ satisfy the inequality $a \le C(A) b$ with a constant $C(A)>0$ depending on the expression $A$, we write $a \lesssim_A b$; for a generic constant we put no subscript.
Everywhere $C$ denotes a generic constant; if needed, we specify the parameters on which it depends.

We consider the usual abstract form of equations \eqref{NS} (see, e.g., \cite{Temam2001} for further details). 
Let $\mathcal{V}$ be the space of smooth  and divergence-free vector fields $u:D\to\mathbb R^2$  with compact support strictly contained in $D$. 
We denote by $H$ and $V$ the closure of $\mathcal{V}$ in $[L^2(D)]^2$ and in $[H^1(D)]^2$, respectively. We denote by $\|\cdot\|_H$ and $\langle \cdot, \cdot \rangle$ the norm and the inner product in $H$. By $V^*$ we denote the dual space of $V$ and by $\langle \cdot, \cdot \rangle$ we denote the dual pairing between $V$ and $V^*$ when no confusion may arise. We set $\mathcal{D}(A):=[H^2(D)]^2 \cap \mathcal{V}$, and define the linear operator $A: \mathcal{D}(A) \subset H \rightarrow H$ as $Au=-\Pi\Delta u$, where $\Pi$ is the projection from $[L^2(D)]^2$ to $H$. Since $V$ coincides with $\mathcal{D}(A^\frac 12)$, we endow $V$ with the norm $\|u\|_V=\|A^{\frac 12}u\|_H$. 
The operator $A$ is a positive selfadjoint operator in $H$ with compact resolvent; we denote by 
$\{\lambda_j\}_{j\in \mathbb N}$ the eigenvalues of $A$ and by  $\{e_j\}_{j\in \mathbb N}$ the corresponding eigenvectors of $A$ that form a complete orthonormal system in $H$.  Moreover
$0<\lambda_1 \le \lambda_2 \le \cdots$ and
\[
\lim_{j\to\infty}\lambda_j=+\infty.
\]
 We recall  the Poincar\'e inequality
\begin{equation}
\label{lambda_1}
\|u\|^2_V \ge \lambda_1 \|u\|^2_H.
\end{equation}
Denoting by $P_N$ and $Q_N$ the orthogonal projection in $H$ onto the space $Span\{e_n\}_{1\le n\le N}$ and onto its complementary, respectively, we have the generalized Poincar\'e inequalities 
\begin{equation}
\label{gen_Poin}
\|P_Nu\|_V^2 \le \lambda_N\|P_N u\|^2_H, \qquad \|Q_Nu\|^2_H \le \frac{1}{\lambda_N}\|Q_Nu\|^2_V
\end{equation}
that hold for all sufficiently smooth $u$ and any $N\ge 1$.
\\
We define the bilinear operator $B: V \times V \rightarrow V^*$ as 
\begin{equation*}
\langle B(u,v), z\rangle:=\int_D(u(x)\cdot \nabla )v(x) \cdot z(x)\, {\rm d} x.
\end{equation*}
It holds
\begin{equation}
\label{B=0}
\langle B(u,v), v\rangle=0,
\end{equation}
\begin{equation*}
\langle B(u,v), z\rangle= -\langle B(u,z), v\rangle.
\end{equation*}

As far as the random forcing term is concerned, we always consider a
 filtered probability space $(\Omega,\mathcal{F},\mathbb{P},\mathbb{F})$, where the filtration $\mathbb{F}=\bigl(\mathscr{F}_t\bigr)_{t\geq 0}$ is right continuous and 
 $\mathscr{F}_0$ contains all $\mathbb{P}$-null events.
 Moreover 
 $W$ is an $U$-cylindrical $\mathbb{F}$-Wiener process, where $U$  is a separable real Hilbert space with  an orthonormal basis  $(f_n)_{n\in\mathbb{N}}$ (see, e.g., details in \cite{DapZab}).

Moreover, we will work under the following assumptions on the operator $G$ characterizing the noise. 
\begin{Assumption}
\label{assumption-stochastic}
 $\;$\\[-3mm]
\begin{itemize}
\item [\textbf{(G1)}]  \label{G1} 
$G:{H} \to L_{HS}(U,H)$ is a Lipschitz continuous operator,  i.e.
 \begin{equation} \label{Lipschitz_G}
  \exists \ L_G >0 : \quad  \|G(u_1)-G(u_2)\|_{L_{HS}(U,H)} \le L_G\|u_1-u_2\|_H \qquad \forall u_1, u_2 \in H.
 \end{equation}
\item[\textbf{(G2)(i)}] \label{item:1} 
 There exists a non negative constant $K_1$ such that
 \begin{equation*}\label{bounded}
  \|G(u)\|_{L_{HS}(U,H)}\le K_1,\quad \forall u \in H.
 \end{equation*}
\item[\textbf{(G2)(ii)}] \label{item:2} 
  There exist non negative constants $K_2, \tilde{K_2}$ and $\gamma \in (0,1)$ such that
 \begin{equation*}
  \|G(u)\|_{L_{HS}(U,H)}\le K_2+ \tilde{K}_2\|u\|_H^{\gamma}, \quad \forall u \in H.
 \end{equation*}
\item[\textbf{(G2)(iii)}] 
\label{item:3} 
  There exist non negative constants $K_3, \tilde{K_3}$ such that
 \begin{equation*}
  \|G(u)\|_{L_{HS}(U,H)}\le K_3+ \tilde{K}_3\|u\|_H, \quad \forall u \in H.
 \end{equation*}
\item[\textbf{(G3)}]  \label{G3}
There exists a 
measurable map $g:H \rightarrow L(H,U)$  such that
\begin{equation}
\label{bound_g}
\sup_{u \in H}\|g(u)\|_{L(H,U)} < \infty
\end{equation}
and
\begin{equation}
\label{GgM}
G(u)g(u)=P_M \qquad \forall u \in H
\end{equation}
for a positive integer $M$.
\end{itemize}
\end{Assumption}

In the sequel, when we say that assumption (G2) holds we mean that 
one of the three assumptions (G2)(i), (G2)(ii), (G2)(iii) holds.

\begin{notation}
Throughout the paper we will reserve the symbol $M$ to denote the integer that appears in Assumption (G3). To infer the uniqueness of the invariant measure and the qualitative mixing result we will require $M$ to be sufficiently large.
\end{notation}

\begin{remark}
The existence of a map $g: H \rightarrow L(H,U)$  fulfilling \eqref{GgM} is equivalent to the following property 
\begin{equation*}
P_MH \subseteq \text{{Im}}\ G(u) \qquad \forall u \in H.
\end{equation*}

Thus  assumption (G3) can be seen as a non degeneracy condition on the low modes. We refer to \cite[Remarks 3.1 and 3.2]{Oda2008} for more details. 
\end{remark}

\begin{example}[for assumption (G3)]
\label{cond_phi_k_rem}
Let us recall that by $\{f_k\}_{k \in \mathbb{N}}$ and $\{e_k\}_{k \in \mathbb{N}}$ we denote orthonormal basis in $U$ and $H$, respectively. 
Suppose that for any $k\in \mathbb N$  there exists a mapping $\phi_k:H \rightarrow \mathbb{R}$
such that for some $M \in \mathbb{N}$,
\begin{equation}
\label{G(u)}
G(x)f_k=\phi_k(x)e_k,\qquad  \ \forall \ k\le M, \ \forall\ x \in H,
\end{equation}
and
\begin{equation*}
0 \ne \phi_k(x), \qquad \forall \ k\le M, \quad \forall\ x \in H. 
\end{equation*}
Take 
\begin{equation}
\label{g_es}
g(x)e_k=
\begin{cases}
\phi_k(x)^{-1}f_k & \text{if} \ k\le M,
\\
0 & \text{otherwise},
\end{cases}
\end{equation}  
with 
\begin{equation}\label{somma-finita_phi}
\sup_{x \in H} \sum_{k\le M} |\phi_k(x)^{-1}|< \infty.
\end{equation}
Then $g$ satisfies \eqref{bound_g} and \eqref{GgM} in Assumption (G3). 
In fact, 
\[\begin{split}
\sup_{u \in H}\|g(u)\|_{L(H,U)}
&=\sup_{u \in H}\sup_{h \in H,\|h\|_H \le 1}\|g(u)h\|_{U}
\\
&=\sup_{u \in H}\sup_{h \in H,\|h\|_H \le 1}\left\Vert\sum_{k \in \mathbb{N}}\langle h, e_k\rangle g(u)e_k\right\Vert_{U}
\\&
=\sup_{u \in H}\sup_{h \in H,\|h\|_H \le 1}\left \Vert\sum_{k \le M}\langle h, e_k\rangle \phi_k(u)^{-1}f_k\right\Vert_{U}
\\
&\le \sup_{u \in H}\sup_{h \in H,\|h\|_H \le 1}\sum_{k \le M}\|h\|_H \|e_k\|_H\|f_k\|_U| \phi_k(u)^{-1}|
\\&
\le \sup_{u \in H}\sum_{k \le M}| \phi_k(u)^{-1}|
\end{split}\]
and the latter quantity is finite by \eqref{somma-finita_phi}.
Moreover, let $v \in H$; then, for any $u \in H$
\begin{multline*}
G(u)g(u)v= G(u)g(u)\left(\sum_{k \in \mathbb{N}}\langle v, e_k\rangle e_k \right)
=G(u)\left(\sum_{k \in \mathbb{N}}\langle v, e_k\rangle g(u)e_k \right)
=
G(u)\left(\sum_{k \le M }\langle v, e_k\rangle \phi_k(u)^{-1}f_k \right)
\\
=\sum_{k \le M }\langle v, e_k\rangle \phi_k(u)^{-1}G(u)f_k 
=\sum_{k \le M }\langle v, e_k\rangle \phi_k(u)^{-1}\phi_k(u)e_k =P_Mv.
\end{multline*}
\end{example}
 
We provide  now concrete examples for operators $G$ satisfying Assumption \ref{assumption-stochastic}.
\begin{example}[for Assumption \ref{assumption-stochastic}]
We take $U=H$. 
\begin{itemize}
\item Consider the mappings
\begin{equation*}
\phi_k(x):= \frac{\sqrt{\|x\|_H^2+1}}{k+1}, \quad k \in \mathbb{N}
\end{equation*}
and define the operator $G$ as in \eqref{G(u)}. Then $G$ satisfies 
(G1) and  (G2)(iii). Moreover, \eqref{bound_g} and \eqref{GgM} in (G3) are satisfied for any finite $M$  by choosing $g$ as in \eqref{g_es}. 
%
\item Let $\gamma\in (0,1)$. Consider the mappings 
\begin{equation*}
\phi_k(x):= \frac{\sqrt{\left(\|x\|_H^2+1\right)\pmb{1}_{(\|x\|_H \le 1)}+\left( \|x\|_H^{2\gamma}+1\right)\pmb{1}_{(\|x\|_H > 1)}}}{k+1}, \quad k \in \mathbb{N}
\end{equation*}
and define the operator $G$ as in \eqref{G(u)}. Then $G$ satisfies 
(G1) and (G2)(ii). Moreover, \eqref{bound_g} and \eqref{GgM} in (G3) are satisfied for any finite $M$  by choosing $g$ as in \eqref{g_es}. 
\item Consider the mappings
\begin{equation*}
\phi_k(x):= \frac{\sqrt{\left(\|x\|_H^2+1\right)\pmb{1}_{(\|x\|_H \le 1)}+\pmb{1}_{(\|x\|_H > 1)}}}{k+1}, \quad k \in \mathbb{N}
\end{equation*}
and define the operator $G$ as in \eqref{G(u)}. Then $G$ satisfies 
(G1) and  (G2)(i). Moreover, \eqref{bound_g} and \eqref{GgM} in (G3) are satisfied  for any finite $M$  by choosing $g$ as in \eqref{g_es}. 
\end{itemize}
\end{example}

We can rewrite problem \eqref{NS} in the  abstract form 
\begin{equation}
\label{NS_abs}
\begin{cases}
{\rm d}u(t) + \left[\nu Au(t)+B(u(t),u(t))\right]\,{\rm d}t=f \, {\rm d}t+G(u(t))\,{\rm d}W(t),\qquad\qquad t>0
\\
u(0)=u_0
\end{cases}
\end{equation}
We assume $\nu>0$, $u_0\in H$  and $ f \in V^*$ independent of time. 

Here is our main result on the stochastic Navier-Stokes equation \eqref{NS_abs}; for a more precise statement see Proposition \ref{prop_mis_inv}, and Theorems \ref{unique_thm} and \ref{asy_sta}.

\begin{theorem}
\label{thm_intro}
Assume (G1).
\begin{itemize}
\item [a)]
If  (G2)(i) or  (G2)(ii) hold, then there exists  a positive integer  $\bar N$, 
 depending on $\nu, f$ and $G$,  such that,
 whenever  (G3) holds for some $M \ge \bar N$, there exists a unique invariant measure which is asymptotically stable.
\item [b)] 
If (G2)(iii) holds and $\nu>\frac{\tilde K_3^2}{2\lambda_1}$,
 then there exists at least one invariant measure. Moreover, there exists a positive integer  $\bar N$, depending on $\nu, f$ and $G$,
 such that,  if   (G3) holds for some $M \ge \bar N$, then the invariant measure is unique provided $\nu>\frac{3\tilde K_3^2}{2\lambda_1}$ and it is asymptotically stable provided $\nu>\frac{11\tilde K_3^2}{2\lambda_1}$.
\end{itemize}
\end{theorem}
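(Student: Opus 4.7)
The plan is to prove Theorem~\ref{thm_intro} in three stages: existence of an invariant measure by Krylov--Bogoliubov, uniqueness via the generalized asymptotic coupling of \cite{GHMR17}, and asymptotic stability by the sharper variant of \cite{KS}. Throughout, the engine is the Foias--Prodi estimate in expected value announced in Section~\ref{Foias-Prodi_section}, which uses the non-degeneracy condition (G3) to synchronize the low modes of two solutions in mean.

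For existence, I would apply It\^o's formula to $\|u\|_H^2$ and use \eqref{B=0} to obtain
\begin{equation*}
\frac{\df}{\df t}\mathbb{E}\|u(t)\|_H^2 + 2\nu \mathbb{E}\|u(t)\|_V^2 \le 2\langle f, u\rangle + \mathbb{E}\|G(u(t))\|_{L_{HS}(U,H)}^2.
\end{equation*}
In cases (G2)(i) and (G2)(ii) the noise term is absorbed via Young's inequality and \eqref{lambda_1}; in case (G2)(iii) it contributes $\tilde K_3^2\mathbb{E}\|u\|_H^2 \le (\tilde K_3^2/\lambda_1)\mathbb{E}\|u\|_V^2$, so the condition $\nu>\tilde K_3^2/(2\lambda_1)$ is exactly what keeps the estimate coercive. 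Time averaging combined with the compact embedding $V\hookrightarrow H$ gives tightness of the empirical averages in $H$, and Krylov--Bogoliubov produces an invariant measure.

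The heart of the argument is the Foias--Prodi estimate. Given a solution $u$ of \eqref{NS_abs} driven by $W$, I would introduce an auxiliary controlled process $v$ solving
\begin{equation*}
\df v + [\nu A v + B(v,v)]\,\df t = f\,\df t + G(v)\,\df W + P_M(u-v)\,\df t,
\end{equation*}
started at some $v_0$. Setting $r:=u-v$, the identity \eqref{GgM} lets us rewrite the extra drift as $G(v)\,g(v)P_M r$, so it will later be absorbable as a Girsanov shift of $W$. It\^o's formula for $\|r\|_H^2$, combined with the standard bound $|\langle B(u,u)-B(v,v),r\rangle|\le C\|u\|_V\|r\|_H\|r\|_V$ and the spectral gap $\nu\lambda_{M+1}$ coming from \eqref{gen_Poin}, leads to an inequality of the form
\begin{equation*}
\frac{\df}{\df t}\mathbb{E}\|r(t)\|_H^2 \le -\bigl(2\nu\lambda_{M+1} - C\mathbb{E}\|u(t)\|_V^2 - C_G\bigr)\mathbb{E}\|r(t)\|_H^2,
\end{equation*}
where $C_G$ scales like $L_G^2$ in cases (G2)(i)--(ii) and like $\tilde K_3^2$ in case (G2)(iii). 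Using the exponential/polynomial moment bounds on $\int_0^t\|u\|_V^2\,\df s$ collected in Appendix~\ref{app_A} and choosing $M\ge\bar N$ large enough to beat the nonlinear and noise contributions, one gets exponential decay of $\mathbb{E}\|r\|_H^2$ in the bounded and sublinear cases and polynomial decay in the linear case.

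For uniqueness and asymptotic stability I would follow the framework of \cite{GHMR17,KS}. The control $h(t):=g(v(t))P_M r(t)\in U$ is uniformly bounded by \eqref{bound_g} and has integrable square norm in expectation by the Foias--Prodi decay, so Novikov's criterion applies and Girsanov produces a measure $\mathbb{Q}\sim\mathbb{P}$ under which $v$ is a genuine solution of \eqref{NS_abs} with initial datum $v_0$. Asymptotic vanishing of $\mathbb{E}\|u(t)-v(t)\|_H^2$ combined with this equivalence forces any two invariant measures to coincide, yielding uniqueness; the sharper construction of \cite{KS}, which dispenses with the localization used in \cite{GHMR17}, upgrades this to convergence of $\mathcal{L}(u(t))$ to the unique invariant measure in a suitable Wasserstein-type distance. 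The main obstacle I anticipate is case (G2)(iii): the noise term in the It\^o expansion of $\|r\|_H^2$ produces a genuine $\tilde K_3^2\mathbb{E}\|r\|_H^2$ contribution that must be strictly dominated by the dissipation $\nu\lambda_{M+1}\mathbb{E}\|r\|_H^2$, and the same quantity reappears when controlling the exponential moments of $\|u\|_V^2$ needed for Novikov. Tracking how many copies of $\tilde K_3^2/\lambda_1$ accumulate through these nested estimates is what pins down the explicit viscosity thresholds with factors $3$ and $11$ separating existence, uniqueness and asymptotic stability in part~(b).
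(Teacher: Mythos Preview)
Your outline captures the high-level architecture (existence via Krylov--Bogoliubov, Foias--Prodi synchronization, generalized coupling for uniqueness/stability), but there are two genuine gaps in the mechanics.

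\textbf{The Foias--Prodi inequality as written is not valid.} In your displayed bound you have replaced the random coefficient $\|u(t)\|_V^2$ by its expectation $\mathbb{E}\|u(t)\|_V^2$. This is the crux of the difficulty: the energy identity for $r=u-v$ gives a \emph{pathwise} inequality of the form
\[
{\rm d}\|r(t)\|_H^2 + \Big(\nu\lambda_N - L_G^2 - \tfrac{1}{\nu}\|u(t)\|_V^2\Big)\|r(t)\|_H^2\,{\rm d}t \le {\rm d}M(t),
\]
and one cannot simply take expectations and factor $\mathbb{E}[\|u\|_V^2\|r\|_H^2]$. The paper instead multiplies by the random integrating factor $\exp\!\big((\frac{\nu\lambda_N}{2}-L_G^2)t-\frac{1}{\nu}\int_0^t\|u\|_V^2\big)$, introduces the stopping time $\tau_{R,\beta}$ of \eqref{tau_R_beta} to control this factor, and splits
\[
\mathbb{E}\|r(t)\|_H^2 = \mathbb{E}\big[\pmb{1}_{(\tau_{R,\beta}=\infty)}\|r(t)\|_H^2\big] + \mathbb{E}\big[\pmb{1}_{(\tau_{R,\beta}<\infty)}\|r(t)\|_H^2\big].
\]
The first term decays like $e^{R+\beta-\nu\lambda_N t/4}$; the second is handled by H\"older against $\mathbb{P}(\tau_{R,\beta}<\infty)^{1/2}$. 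The exponential versus polynomial dichotomy you mention is \emph{not} a dichotomy in the decay of $\mathbb{E}\|r\|_H^2$ coming from a Gronwall argument; it reflects whether $\mathbb{P}(\tau_{R,\beta}<\infty)$ can be estimated by an exponential martingale inequality (only under (G2)(i), Proposition~\ref{tau_exp}) or only by Chebychev/BDG giving $R^{-p}$ (under (G2)(ii)--(iii), Propositions~\ref{tau_pol_ii}--\ref{tau_pol_iii}). Optimizing $R$ as a function of $t$ then yields Theorem~\ref{FP_cor}. The thresholds $3$ and $11$ arise precisely here: one needs $q>2$ in Proposition~\ref{lem_pol} (forcing $\nu>\frac{3\tilde K_3^2}{2\lambda_1}$) and then $p>1$ in \eqref{f} to make $\int_0^\infty f(s)\,{\rm d}s<\infty$ (forcing $\nu>\frac{11\tilde K_3^2}{2\lambda_1}$).

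\textbf{Novikov does not follow from ``integrable square norm in expectation''.} Finiteness of $\mathbb{E}\int_0^\infty\|h\|_U^2\,{\rm d}s$ does not give the exponential moment Novikov requires. The paper handles this differently in the two regimes. For uniqueness it introduces the localization $\pmb{1}_{s\le\sigma_K}$ of \eqref{def_sigma}, so that $\int_0^\infty\|h\|_U^2\pmb{1}_{s\le\sigma_K}\,{\rm d}s$ is \emph{almost surely bounded} and Novikov is trivial (Lemma~\ref{abs_cont_lem}); one then shows $\mathbb{P}(\sigma_K=\infty)>0$ via Borel--Cantelli and the stopping-time estimates (Proposition~\ref{prop_coupling}), which is enough for the abstract criterion of \cite{GHMR17}. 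For asymptotic stability the localization is dropped: from $\int_0^\infty\mathbb{E}\|r(s)\|_H^2\,{\rm d}s<\infty$ one only gets $\mathbb{P}\big(\int_0^\infty\|h\|_U^2\,{\rm d}s<\infty\big)=1$, which yields absolute continuity (not equivalence) of laws---exactly what Theorem~\ref{KS} from \cite{KS} requires. This asymmetry between the two proofs is what produces the gap between the constants $3$ and $11$ in part~(b).
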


\begin{remark}
\label{rem_cases}
Notice that
\[
 (G2)(i)\Longrightarrow(G2)(ii)\Longrightarrow (G2)(iii).
\]
Indeed, (G2)(i) is a particular case of (G2)(ii): take $K_2=K_1$ and $\tilde K_2=0$.

 Furthermore, by  the Young inequality we have 
 \[
 \|u\|_H^{\gamma}\le \varepsilon \|u\|_H+(1-\gamma)\left(\frac \gamma \varepsilon\right)^{\frac \gamma{1-\gamma}}
 \]
 for any positive $\varepsilon$. This shows the other implication.

We deduce that 
 if we are able to prove a result working under assumption  (G2)(iii) then the same result will hold also under (G2)(i) and (G2)(ii).

The statement of Theorem \ref{thm_intro} explains why we  state Assumption (G2) separating the three cases. 
Under the stronger assumption (G2)(i) or (G2)(ii) we prove the existence of a unique invariant measure, which is asymptotically stable, without any requirement on the viscosity $\nu$.
Notice that the case (G2)(i) corresponds to the case studied by Odasso in \cite{Oda2008}
 where the author, with different techniques, obtains the same results as we do (in fact, he also proves exponential mixing). 
 Things are more delicate under  the weaker assumption (G3)(iii): the existence of the invariant measure, its uniqueness and its asymptotic stability require gradually narrower assumptions on the viscosity  $\nu$. 
 A strong enough dissipation is required to balance the intensity of the multiplicative part of the noise $\tilde K_3$ more and more consistently.
 
Another reason to consider  three different hypotheses  concerns the Foias-Prodi estimates that we will derive in Theorem \ref{FP_cor}: depending on the type of assumption (G2) on the noise, we will get different decays in time (exponential or polynomial). We believe that these different decays will eventually lead to different types of quantitative mixing; this is under investigation at the moment.
\end{remark}


\section{Well posedness results}
\label{prel_res_sec}

In this Section we collect the results concerning the well posedness of system \eqref{NS_abs} under the Assumptions  (G1) and (G2). These are classical results.
Keeping in mind Remark \ref{rem_cases} it is enough to prove them under Assumptions  (G1) and (G2)(iii).

First, the solutions can be   weak or  strong solutions, in the probabilistic sense.

\begin{definition}
\label{mart_sol_def}
We say that there exists a martingale solution of the Navier-Stokes equation
\eqref{NS_abs} on the interval $[0,T]$ and with initial velocity $u_0\in H$ if there exist
a stochastic basis $(\widetilde \Omega, \widetilde{\mathcal{F}}, \widetilde{\mathbb{P}},\widetilde{\mathbb{F}})$, a $U$-cylindrical Wiener process $\widetilde W$,
and a progressively measurable process $u:[0,T]\times \widetilde \Omega \rightarrow H$ with $\widetilde{\mathbb{P}}$ a.e. paths 
\begin{equation*}
v \in C([0,T];H) \cap L^2(0,T;V)
\end{equation*}
such that $\widetilde{\mathbb{P}}$-a.s., the identity 
\begin{multline*}
\langle u(t), \psi\rangle-\int_0^t \langle A^{\frac 12}u(s), A^{\frac 12}\psi\rangle\, {\rm d}s + \int_0^t \langle B(u(s),u(s)), \psi\rangle\, {\rm d}s 
=\langle u_0, \psi\rangle + \langle f, \psi\rangle t + \langle \int_0^t G(u(s))\, {\rm d}\widetilde{W}(s), \psi\rangle
\end{multline*}
holds true for any $t \in [0,T]$, $\psi \in V$.
\end{definition}

\begin{definition}
\label{mart_sol_def}
Given a stochastic basis $(\Omega, \mathcal{F}, \mathbb{P}, \mathbb{F})$ and  a $U$-cylindrical Wiener process $W$, a strong solution of  the Navier-Stokes equation \eqref{NS_abs} on the interval $[0,T]$ 
with initial velocity $u_0\in H$ is an $H$-valued contiunuous $\mathbb{F}$-adapted process $u$ with $\mathbb{P}$-a.e. path in $L^2(0,T;V)$
such that ${\mathbb{P}}$-a.s., the identity 
\begin{multline*}
\langle u(t), \psi\rangle-\int_0^t \langle A^{\frac 12}u(s), A^{\frac 12}\psi\rangle\, {\rm d}s + \int_0^t \langle B(u(s),u(s)), \psi\rangle\, {\rm d}s 
=\langle u_0, \psi\rangle + \langle f, \psi\rangle t + \langle \int_0^t G(u(s))\, {\rm d}W(s), \psi\rangle
\end{multline*}
holds true for any $t \in [0,T]$, $\psi \in V$.
\end{definition}

Now  we consider the existence of martingale solutions.

\begin{proposition}
\label{prop_exis_mart_sol}
Under Assumptions (G1) and (G2), for any $T>0$ there exists a martingale solution to problem \eqref{NS_abs} which satisfies, for any $q \ge 2$,
\begin{equation}
\label{q_mom_sol_0}
\widetilde{\mathbb{E}}\left[ \|u\|^q_{L^{\infty}(0,T;H)}\right] < \infty.
\end{equation}
\end{proposition}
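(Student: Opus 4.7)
The plan is to follow the standard Galerkin approximation plus Skorokhod compactness program. By Remark \ref{rem_cases} it suffices to argue under (G1) and the weakest growth hypothesis (G2)(iii). First I introduce on $P_N H$ the finite-dimensional SDE
\begin{equation*}
du^N + \bigl[\nu A u^N + P_N B(u^N,u^N)\bigr]\,dt = P_N f\,dt + P_N G(u^N)\,dW, \qquad u^N(0)=P_N u_0,
\end{equation*}
whose drift is locally Lipschitz and whose diffusion is Lipschitz by (G1), so classical SDE theory yields a unique local strong solution.

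Next, applying It\^o's formula to $\|u^N\|_H^{2q}$, using the cancellation \eqref{B=0}, Young's inequality on $\langle f, u^N\rangle$, the linear growth assumption (G2)(iii) to bound $\|G(u^N)\|_{L_{HS}(U,H)}^2$, and the Burkholder-Davis-Gundy inequality, a standard Gronwall argument gives the uniform-in-$N$ estimate
\begin{equation*}
\sup_{N\in\mathbb N}\mathbb E\biggl[\sup_{t\in[0,T]}\|u^N(t)\|_H^{2q}+\Bigl(\int_0^T\|u^N(s)\|_V^2\,ds\Bigr)^{q}\biggr] < \infty
\end{equation*}
for every $q\geq 1$. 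This globalizes the Galerkin solutions on $[0,T]$ and, once the limit is taken, will yield \eqref{q_mom_sol_0}. Combining this bound with a fractional-in-time H\"older estimate for the deterministic drift in $V^*$ and a Kolmogorov-type estimate for the stochastic integral, the Aubin-Lions-Dubinsky lemma gives tightness of the laws $\{\mathcal L(u^N)\}$ in $L^2(0,T;H)\cap C([0,T];V^*)\cap C_w([0,T];H)$.

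By Prokhorov together with the Jakubowski-Skorokhod representation theorem, on a new stochastic basis $(\widetilde\Omega,\widetilde{\mathcal F},\widetilde{\mathbb P},\widetilde{\mathbb F})$ there exist processes $\tilde u^N,\tilde u$ and cylindrical Wiener processes $\widetilde W^N,\widetilde W$ with $\tilde u^N\to\tilde u$ $\widetilde{\mathbb P}$-almost surely in the above topology and with $\mathcal L(\tilde u^N)=\mathcal L(u^N)$. The strong convergence in $L^2(0,T;H)$ combined with the standard two-dimensional trilinear estimate allows me to pass to the limit in the bilinear term, and lower semicontinuity preserves \eqref{q_mom_sol_0} for $\tilde u$.

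The main technical point is identification of the stochastic integral: unlike in the additive noise case, $\int_0^\cdot G(\tilde u^N)\,d\widetilde W^N$ does not converge automatically, and one must exploit Lipschitz continuity of $G$ from (G1) together with the strong convergence of $\tilde u^N$ in $L^2(0,T;H)$ to deduce convergence in probability in $C([0,T];H)$ via a standard stochastic convergence argument. Apart from this ingredient, all the remaining steps are classical for the two-dimensional Navier-Stokes equations.
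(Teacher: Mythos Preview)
Your outline is correct and follows the standard Galerkin--compactness route that underlies the results the paper invokes. The paper's own proof, however, does not reproduce this argument: it simply cites \cite[Theorem~3.1]{Fla_Gat} for the existence of a martingale solution with paths in $C_w([0,T];H)\cap L^2(0,T;V)$ and the moment bound \eqref{q_mom_sol_0}, and then \cite[Lemma~7.2]{BrzMot} to upgrade to $C([0,T];H)$ in dimension two. So the content is the same, but the paper treats the proposition as a direct consequence of known literature, whereas you have sketched the machinery behind those references. One small point: your tightness is stated in $C_w([0,T];H)$, and you do not explicitly indicate how strong continuity in $H$ is recovered; this is precisely the step the paper isolates by appealing to \cite{BrzMot}.
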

\begin{proof}
Assuming  (G1) and (G2)(iii), in \cite[Thorem 3.1]{Fla_Gat} the existence of a martingale solution  is proved in any space dimension $d \ge 2$, with $\widetilde{\mathbb{P}}$ a.e. paths 
$v \in C_w([0,T];H) \cap L^2(0,T;V)$. Arguing as in \cite[Lemma 7.2]{BrzMot}, in dimension $d=2$, one can prove the additional regularity $u \in C([0,T];H)$ $\widetilde{\mathbb{P}}$-a.s.
Estimate \eqref{q_mom_sol_0} is proved in \cite[Appendix A]{Fla_Gat}. 

Keeping in mind Remark \ref{rem_cases}
we get that the result is true when we assume any of the three (G2) conditions.
\end{proof}

Then we consider the pathwise uniqueness.
\begin{proposition}
\label{prop_uniq_sol}
Let $T>0$. Let Assumptions (G1) and (G2) hold. Let $(\widetilde\Omega, \widetilde{\mathcal{F}}, \widetilde{\mathbb{P}},  \widetilde{\mathbb{F}},u_i)$, $i=1,2$ be two martingale solutions to \eqref{NS_abs} with the same initial velocity. Then $\widetilde{\mathbb{P}}(u_1(t)=u_2(t)\, \ \text{for all} \ t \in [0,T] )=1$, that is solutions to equation \eqref{NS_abs} are pathwise unique.
\end{proposition}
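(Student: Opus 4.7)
The plan is the classical difference argument. Set $w:=u_1-u_2$, so that $w(0)=0$ and, using $B(u_1,u_1)-B(u_2,u_2)=B(w,u_1)+B(u_2,w)$, the process $w$ satisfies in $V^\ast$
\[
\df w(t)+\bigl[\nu A w(t)+B(w(t),u_1(t))+B(u_2(t),w(t))\bigr]\df t=\bigl[G(u_1(t))-G(u_2(t))\bigr]\df \widetilde W(t).
\]
Because both $u_1,u_2$ have $\widetilde{\mathbb P}$-a.s.\ paths in $C([0,T];H)\cap L^2(0,T;V)$ (Proposition \ref{prop_exis_mart_sol}), the classical It\^o formula for $\|\cdot\|_H^2$ (Pardoux / Krylov--Rozovskii) applies to $w$ and gives
\[
\|w(t)\|_H^2+2\nu\!\int_0^t\!\|w(s)\|_V^2\df s=-2\!\int_0^t\!\langle B(w,u_1)+B(u_2,w),w\rangle\df s+\!\int_0^t\!\|G(u_1)-G(u_2)\|_{L_{HS}}^2\df s+2\!\int_0^t\!\langle w,(G(u_1)-G(u_2))\df\widetilde W\rangle.
\]

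Next I would exploit the cancellation $\langle B(u_2,w),w\rangle=0$ from \eqref{B=0} and estimate the remaining trilinear term via Ladyzhenskaya's inequality in dimension two, $\|w\|_{L^4}^2\lesssim\|w\|_H\|w\|_V$, which yields
\[
|\langle B(w,u_1),w\rangle|\le C\|w\|_H\|w\|_V\|u_1\|_V\le \nu\|w\|_V^2+\tfrac{C^2}{4\nu}\|u_1\|_V^2\|w\|_H^2.
\]
Combined with $\|G(u_1)-G(u_2)\|_{L_{HS}}^2\le L_G^2\|w\|_H^2$ from \eqref{Lipschitz_G}, this gives the pathwise inequality
\[
\|w(t)\|_H^2+\nu\!\int_0^t\!\|w(s)\|_V^2\df s\le \int_0^t\!\bigl(\tfrac{C^2}{2\nu}\|u_1(s)\|_V^2+L_G^2\bigr)\|w(s)\|_H^2\df s+2\!\int_0^t\!\langle w,(G(u_1)-G(u_2))\df \widetilde W\rangle.
\]

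The main obstacle is the stochastic integral: since the deterministic weight $\|u_1\|_V^2$ is only $\widetilde{\mathbb P}$-a.s.\ integrable on $[0,T]$, one cannot simply take expectation and invoke Gronwall. The standard remedy is the exponential-weight trick combined with localization. Introduce the $\widetilde{\mathbb P}$-a.s.\ positive process
\[
\eta(t):=\exp\!\Bigl(-\!\int_0^t\!\bigl(\tfrac{C^2}{2\nu}\|u_1(s)\|_V^2+L_G^2\bigr)\df s\Bigr)
\]
and the stopping times $\tau_R:=\inf\{t\in[0,T]:\int_0^t\|u_1(s)\|_V^2\df s>R\}\wedge T$, which satisfy $\tau_R\uparrow T$ a.s. Applying the It\^o product rule to $\eta(t)\|w(t)\|_H^2$, all deterministic contributions are absorbed by the derivative of $\eta$ and one is left with
\[
\df\bigl(\eta\|w\|_H^2\bigr)\le -\nu\,\eta\|w\|_V^2\df t+2\,\eta\,\langle w,(G(u_1)-G(u_2))\df \widetilde W\rangle.
\]
Stopped at $\tau_R$ the stochastic integral is a genuine martingale (the integrand is bounded by $\eta\le1$, by the Lipschitz constant $L_G$, and by $\sup_{s\le\tau_R}\|w(s)\|_H^2<\infty$ thanks to $u_i\in C([0,T];H)$). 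Taking expectation yields $\widetilde{\mathbb E}[\eta(t\wedge\tau_R)\|w(t\wedge\tau_R)\|_H^2]\le 0$, hence $w\equiv0$ on $[0,\tau_R]$ a.s.\ since $\eta>0$ a.s.; letting $R\to\infty$ concludes. The multiplicative character of the noise enters only through the Lipschitz bound and is fully absorbed by the exponential weight, which is why the argument works verbatim under (G1)+(G2)(iii) (and, by Remark \ref{rem_cases}, under (G2)(i) or (G2)(ii) as well).
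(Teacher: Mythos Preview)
Your argument is correct and follows the same exponential-weight (Schmalfuss) trick that the paper uses. The paper packages the key estimate as a separate lemma valid for distinct initial data $x,y$ (Lemma~\ref{techn_lemm}), namely $\widetilde{\mathbb E}\bigl[e^{-\int_0^t\psi}\|u_1(t)-u_2(t)\|_H^2\bigr]\le\|x-y\|_H^2$ with $\psi(s)=L_G^2-\lambda_1\nu+\tfrac1\nu\|u_1(s)\|_V^2$, and then specializes to $x=y$; it disposes of the stochastic integral by checking it is a true martingale via the moment bound \eqref{q_mom_sol} rather than by your stopping-time localization, and concludes uniqueness for all $t$ via a countable dense set and continuity of paths. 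These are cosmetic differences; the substance of the two proofs is the same.
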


The proof of the result is based on the following technical lemma whose proof is postponed to Appendix \ref{dim_lem}.
\begin{lemma}
\label{techn_lemm}
Let Assumptions (G1)-(G2)(iii) hold. Let $(\widetilde\Omega, \widetilde{\mathcal{F}}, \widetilde{\mathbb{P}},  \widetilde{\mathbb{F}},u_i)$, $i=1,2$ be two martingale solutions to \eqref{NS_abs} with initial velocities $x, y \in H$, respectively. 
Then
\begin{equation*}
\widetilde{\mathbb{E}}\left[ e^{-\left(L_G^2t- \lambda_1 \nu t+\frac{1}{\nu}\int_0^t\|u_1(s)\|^2_V\, {\rm d}s \right)}\|u_1(t)-u_2(t)\|^2_H\right] \le \|x-y\|_H^2.
\end{equation*}
\end{lemma}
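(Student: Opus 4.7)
The plan is to prove the estimate by a standard Itô / exponential weight calculation applied to the difference process, with a carefully chosen integrating factor that cancels the drift.

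First I would set $v(t) := u_1(t) - u_2(t)$, which satisfies
\begin{equation*}
\mathrm{d}v(t) + \nu A v(t)\,\mathrm{d}t + \bigl[B(u_1,u_1) - B(u_2,u_2)\bigr]\,\mathrm{d}t = \bigl[G(u_1) - G(u_2)\bigr]\,\mathrm{d}\widetilde W(t),
\end{equation*}
with $v(0) = x - y$. Using $B(u_1,u_1) - B(u_2,u_2) = B(v,u_1) + B(u_2,v)$ together with the cancellation $\langle B(u_2,v), v\rangle = 0$ from \eqref{B=0}, Itô's formula applied to $\|v(t)\|_H^2$ produces the three expected contributions: the dissipation $-2\nu\|v\|_V^2\,\mathrm{d}t$, the nonlinear term $-2\langle B(v,u_1),v\rangle\,\mathrm{d}t$, and the Itô correction $\|G(u_1)-G(u_2)\|_{L_{HS}(U,H)}^2\,\mathrm{d}t$, plus a local martingale term.

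Next I would control each term on the right-hand side. The Lipschitz assumption (G1) gives $\|G(u_1)-G(u_2)\|_{L_{HS}(U,H)}^2 \le L_G^2\|v\|_H^2$, producing the $L_G^2 t$ contribution. For the nonlinear term I would invoke the 2D interpolation estimate $|\langle B(v,u_1),v\rangle| \le c\,\|v\|_H\|v\|_V\|u_1\|_V$ followed by Young's inequality with parameter calibrated so that
\begin{equation*}
2|\langle B(v,u_1),v\rangle| \le \nu\|v\|_V^2 + \tfrac{1}{\nu}\|u_1\|_V^2\|v\|_H^2 .
\end{equation*}
This absorbs half of the dissipation; the other half, $-\nu\|v\|_V^2$, I would trade against the Poincaré inequality \eqref{lambda_1} to get $-\lambda_1\nu\|v\|_H^2$. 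Altogether this yields the scalar differential inequality
\begin{equation*}
\mathrm{d}\|v(t)\|_H^2 \le \Bigl(L_G^2 - \lambda_1\nu + \tfrac{1}{\nu}\|u_1(t)\|_V^2\Bigr)\|v(t)\|_H^2\,\mathrm{d}t + \mathrm{d}M_t,
\end{equation*}
where $M$ is the local martingale.

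At this point I would introduce the integrating factor $\phi(t) := L_G^2 t - \lambda_1\nu t + \frac{1}{\nu}\int_0^t \|u_1(s)\|_V^2\,\mathrm{d}s$, whose derivative is exactly the bracketed coefficient above, and compute $\mathrm{d}\bigl(e^{-\phi(t)}\|v(t)\|_H^2\bigr)$ by the Itô product rule. The deterministic drift cancels by construction, leaving
\begin{equation*}
\mathrm{d}\bigl(e^{-\phi(t)}\|v(t)\|_H^2\bigr) \le e^{-\phi(t)}\,\mathrm{d}M_t .
\end{equation*}
Finally, to take expectations I would localize with stopping times $\tau_n := \inf\{t : \|v(t)\|_H^2 + \int_0^t\|u_1\|_V^2\,\mathrm{d}s \ge n\} \wedge T$, so that on $[0,\tau_n]$ the stochastic integral is a genuine martingale, giving $\widetilde{\mathbb{E}}\bigl[e^{-\phi(t\wedge\tau_n)}\|v(t\wedge\tau_n)\|_H^2\bigr] \le \|x-y\|_H^2$; then pass to the limit via Fatou's lemma, using that $\tau_n \uparrow T$ a.s. by the path regularity $u_i \in C([0,T];H) \cap L^2(0,T;V)$ guaranteed by Proposition \ref{prop_exis_mart_sol}. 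The main technicality I anticipate is justifying this localization step, but the weight $e^{-\phi}$ actually helps because the term $-\tfrac{1}{\nu}\int_0^t\|u_1\|_V^2\,\mathrm{d}s$ in the exponent damps precisely where the integrand $\|u_1\|_V^2$ blows up, keeping things integrable.
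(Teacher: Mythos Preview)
Your proposal is correct and follows essentially the same route as the paper: form the difference, apply It\^o to $\|v\|_H^2$, estimate the trilinear term via the 2D interpolation plus Young to produce the coefficient $L_G^2-\lambda_1\nu+\tfrac{1}{\nu}\|u_1\|_V^2$, and cancel the drift with the matching exponential weight. The only difference is in the last step: the paper shows directly that the weighted stochastic integral is a true martingale by bounding its second moment via $e^{-\phi(t)}\le e^{\lambda_1\nu t}$ and the a priori bound \eqref{q_mom_sol_0} on $\widetilde{\mathbb{E}}\int_0^t\|r(s)\|_H^4\,\mathrm{d}s$, whereas you localize with stopping times and pass to the limit by Fatou; both are standard and yield the same conclusion.
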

\begin{proof} $[$of Proposition \ref{prop_uniq_sol}$]$
Keeping in mind Remark \ref{rem_cases}
we proceed assuming (G2)(iii).
Lemma \ref{techn_lemm} yields
\begin{equation*}
\widetilde{\mathbb{E}}\left[ e^{-\left(L_G^2t- \lambda_1 \nu t+\frac{1}{\nu}\int_0^t\|u_1(s)\|^2_V\, {\rm d}s \right)}\|u_1(t)-u_2(t)\|^2_H\right] \le 0.
\end{equation*}
So 
\begin{equation*}
e^{-\left(L_G^2t- \lambda_1 \nu t+\frac{1}{\nu}\int_0^t\|u_1(s)\|^2_V\, {\rm d}s \right)}\|u_1(t)-u_2(t)\|^2_H= 0, \quad \widetilde{\mathbb{P}}-a.s..
\end{equation*}
Thus, if we take a sequence $\{t_k\}_{k=1}^\infty$ which is dense in $[0,T]$ we have 
\begin{equation*}
\widetilde{\mathbb{P}}\left( \|u_1(t_k)-u_2(t_k)\|_H=0, \ \forall \ k \in \mathbb{N}\right)=1.
\end{equation*}
Since a.e.  path of the  solution process belongs to $C([0,T],H)$ we infer $\widetilde{\mathbb{P}}\left( \|u_1(t)-u_2(t)\|_H=0, \ \forall \ t \in[0,T] \right)=1$ and this concludes the proof.
\end{proof}

Keeping in mind Propositions \ref{prop_exis_mart_sol} and \ref{prop_uniq_sol} and \cite{Uniqueness}, which ensures that existence of a martingale solution and pathwise uniqueness yield existence of a unique strong solution, we get

\begin{theorem}
\label{ex_uniq_sol}
Under Assumptions  (G1) and (G2) there exists a unique strong solution to problem \eqref{NS_abs} with $\mathbb{P}$-a.e. paths in $C([0, + \infty);H)\cap L^2_{loc}(0, \infty;V)$ that satisfies, for any $T>0$ and $q \ge 2$,
\begin{equation}
\label{q_mom_sol}
\mathbb{E}\left[ \|u\|^q_{L^{\infty}(0,T;H)}\right] < \infty.
\end{equation}
\end{theorem}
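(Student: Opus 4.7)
The plan is to combine the three ingredients already at our disposal via the standard Yamada--Watanabe principle. Fix an arbitrary horizon $T>0$. By Proposition \ref{prop_exis_mart_sol} there exists a martingale solution of \eqref{NS_abs} on $[0,T]$ satisfying the moment bound \eqref{q_mom_sol_0}, and by Proposition \ref{prop_uniq_sol} pathwise uniqueness holds among martingale solutions. These are precisely the hypotheses of the infinite-dimensional Yamada--Watanabe theorem of \cite{Uniqueness}, which I would invoke directly to conclude that on the given stochastic basis $(\Omega,\mathcal F,\mathbb P,\mathbb F)$ and for the given $U$-cylindrical Wiener process $W$ there exists a strong solution $u^{T}$ of \eqref{NS_abs} on $[0,T]$, unique up to indistinguishability, with $\mathbb P$-a.e. path in $C([0,T];H)\cap L^2(0,T;V)$.

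Next I would extend the solution to the half-line. Consider a sequence $T_n\uparrow +\infty$ and the corresponding strong solutions $u^{T_n}$ on $[0,T_n]$ with initial datum $u_0$. By the uniqueness part, for $m\le n$ the restriction of $u^{T_n}$ to $[0,T_m]$ is indistinguishable from $u^{T_m}$. Hence the processes are consistent and I can define $u(t):=u^{T_n}(t)$ for any $n$ with $T_n\ge t$; this is unambiguous up to a $\mathbb P$-null set, is $\mathbb F$-adapted, and has $\mathbb P$-a.e. path in $C([0,+\infty);H)\cap L^2_{\mathrm{loc}}(0,\infty;V)$. By construction $u$ satisfies the variational formulation of \eqref{NS_abs} on every $[0,T]$, hence on $[0,+\infty)$. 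Uniqueness on $[0,+\infty)$ follows from uniqueness on every $[0,T]$ by the same localisation.

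Finally, the moment estimate \eqref{q_mom_sol} is inherited from \eqref{q_mom_sol_0}: since the law of the strong solution $u$ on $C([0,T];H)$ coincides with the law of any martingale solution constructed in Proposition \ref{prop_exis_mart_sol} (by pathwise uniqueness and the Yamada--Watanabe principle, uniqueness in law also holds), and since the functional $v\mapsto \|v\|^q_{L^\infty(0,T;H)}$ is Borel measurable on $C([0,T];H)$, we obtain $\mathbb{E}[\|u\|^q_{L^\infty(0,T;H)}]=\widetilde{\mathbb{E}}[\|\tilde u\|^q_{L^\infty(0,T;H)}]<\infty$ for every $q\ge 2$.

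Essentially all the analytical work has already been done: the a priori bounds and the tightness arguments are encoded in Proposition \ref{prop_exis_mart_sol}, and the delicate pathwise uniqueness (the only step where the two-dimensional nature of the equation and the Lipschitz assumption (G1) enter in a nontrivial way) is Proposition \ref{prop_uniq_sol} via Lemma \ref{techn_lemm}. The only mildly subtle point of the present proof is therefore the consistency argument used to pass from solutions on $[0,T_n]$ to a single global-in-time solution; I do not expect any genuine obstacle there, since pathwise uniqueness is preserved under restriction to sub-intervals.
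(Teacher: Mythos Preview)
Your proof is correct and follows exactly the same approach as the paper: combine the existence of a martingale solution (Proposition~\ref{prop_exis_mart_sol}), pathwise uniqueness (Proposition~\ref{prop_uniq_sol}), and the Yamada--Watanabe theorem of \cite{Uniqueness} to obtain a unique strong solution, with the moment bound inherited from \eqref{q_mom_sol_0}. Your exposition is in fact more detailed than the paper's, which simply records the theorem as an immediate consequence of these three ingredients without spelling out the consistency argument or the transfer of the moment estimate.
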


\section{Foias-Prodi estimates in expectation}
\label{Foias-Prodi_section}

This Section is devoted to establishing a Foias-Prodi type estimate for the Navier-Stokes equation \eqref{NS_abs} that holds in expectation. This result will serve as a crucial technical tool for the arguments establishing the uniqueness of the invariant measure and the qualitative mixing result. 

The Foias-Prodi estimates describe the following property for an infinite dimensional dynamical system:
 given any two solutions, if they 
  synchronize in the limit as $t \rightarrow + \infty$ on a sufficient (but finite) number of components, i.e. the low modes,
 then in fact all components synchronize. In other words, the dynamics of the high modes is asymptotically enslaved to the dynamics of the low modes.
What we get is that any two solutions, with different initial velocities, converge to each other as $t\to+\infty$ if a control acts on a sufficient  finite number of components; the convergence is in mean value.

We proceed as follows.
Given $G$ satisfying assumptions (G1) and (G2) and $u_0 \in H$, let $u=u(u_0)$ denote the  solution of  the Navier-Stokes equation \eqref{NS_abs}.
Given $\lambda>0, N>0$ and $v_0\in H$, let $v=v(v_0,u_0)$ denote the corresponding solution of 
\begin{equation}
\label{NS_abs_nud}
\begin{cases}
{\rm d}v(t) + \left[\nu Av(t)+B(v(t),v(t))\right]\,{\rm d}t
= 
f \,{\rm d}t +G(v(t))\,{\rm d}W(t)+ \lambda P_N(u(t)-v(t))\,{\rm d}t, \qquad t>0
\\
v(0)=v_0
\end{cases}
\end{equation}
where $P_N$ is the orthogonal projection from  $H$ onto the space $Span\{e_n\}_{1\le n\le N}$. Here $\lambda>0$ is a parameter to be  suitably chosen later on. 
\begin{notation}
\label{notation_N}
Throughout the paper we will reserve the symbol $N$ to indicate the dimension of the projected space $P_NH$ where the control $\lambda P_N(u-v)$  acts. 
\end{notation}
We will refer to \eqref{NS_abs_nud} as the \textit{nudged equation} corresponding to the  Navier-Stokes equation \eqref{NS_abs}. The well posedness of   \eqref{NS_abs_nud} can be trivially proved for \eqref{NS_abs_nud}: the additional term $\lambda P_N(u-v)=\lambda P_N u-\lambda P_Nv$ 
does not crucially impact the well-posedness estimates (see, e.g., \cite[Remark 8]{KS}).

The effect of the \textit{nudging term} $ \lambda P_N(u-v)$ is to drive $v$ towards $u$ on $P_NH$ that is on the low modes; the Foias-Prodi estimates (in expectation) will in fact quantity how many modes need to be activated in order to synchronize the full solution. More in details, we will show in Theorem \ref{FP_cor} that, provided $N$ is taken sufficiently large, $\mathbb{E}\left[ \|u(t)-v(t)\|^2_H\right]$ decays in time, as $t\to+\infty$, with different rates according to the different assumptions  on the covariance operator of the noise. The idea of the proof, inspired by \cite{GHMR21}, is as follows: we show (see Subsection \ref{FP_sec}) that for certain stopping time $\tau_{R, \beta}$ that controls the growth of the solution to \eqref{NS_abs}, the expectation $\mathbb{E}\left[\pmb{1}_{(\tau_{R, \beta=\infty)}} \|u(t)-v(t)\|^2_H\right]$ decays with an exponential rate in time (see Corollary \ref{cor_tau_R_beta}). Then we exploit energy estimates for the solutions to \eqref{NS_abs} to prove that the probability such stopping time $\tau_{R, \beta}$ remains finite decays in the cut-off parameter $R$ (see Subsection \ref{decay_sec}). According to the different assumptions on the covariance of the noise (G2)(i), (ii) or (iii) we obtain different types of decay in $R$, see Propositions \ref{tau_exp}, \ref{tau_pol_ii} and \ref{tau_pol_iii}. We highlight that in order to obtain a decay in the parameter $R$ we will need to take $N$  sufficiently large. The Foias-Prodi estimates (see Theorem \ref{FP_cor}) easily  follow, combining the results of Corollary \ref{cor_tau_R_beta} and Propositions \ref{tau_exp}, \ref{tau_pol_ii} and \ref{tau_pol_iii}.

\subsection{A preliminary estimate in expected value}
\label{FP_sec}

Let us start with the following preliminary result.
\begin{proposition}
\label{FP_thm}
Assume  (G1) and (G2).  
If we take $\lambda= \frac{\nu \lambda_N}{2}$ in the nudged equation \eqref{NS_abs_nud}, then for any $u_0, v_0 \in H$, the estimate 
\begin{multline}
\label{FP_est}
\mathbb{E}\left[\exp\left(\left(\frac{\nu\lambda_N}{2}-L_G^2\right) (t\wedge \tau)-\frac{1}{\nu}\int_0^{t\wedge \tau}\|u(s)\|^2_V\, {\rm d}s \right) \|u(t \wedge \tau)-v(t \wedge \tau)\|^2_H 
\right.
\\
 \left.+ \frac{\nu\lambda_N}{2}\int_0^{t \wedge \tau}\exp\left(\left(\frac{\nu\lambda_N}{2}-L_G^2\right)s-\frac{1}{\nu}\int_0^s\|u(\zeta)\|^2_V\, {\rm d}\zeta\right)\|u(s)-v(s)\|^2_H \, {\rm d}s\right] \le \|u_0-v_0\|^2_H
\end{multline}
holds for any stopping time $\tau\ge 0$ and any $t \ge 0$. Here $u=u(u_0)$ and $v=v(v_0,u_0)$ obey equations \eqref{NS_abs} and \eqref{NS_abs_nud} respectively.
\end{proposition}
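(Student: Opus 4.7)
The plan is to study the difference $w := u - v$. Subtracting \eqref{NS_abs_nud} from \eqref{NS_abs}, using the decomposition $B(u,u) - B(v,v) = B(w, u) + B(v, w)$, the cancellation \eqref{B=0}, and applying It\^o's formula to $\|w\|_H^2$ produces
\begin{equation*}
\mathrm{d}\|w\|_H^2 + 2\nu\|w\|_V^2\, \mathrm{d}t + 2\lambda\|P_N w\|_H^2\, \mathrm{d}t = -2\langle B(w,u), w\rangle\, \mathrm{d}t + \|G(u)-G(v)\|_{L_{HS}(U,H)}^2\, \mathrm{d}t + \mathrm{d}M_t,
\end{equation*}
where $M_t := 2\int_0^t \langle w(s), (G(u(s))-G(v(s)))\,\mathrm{d}W(s)\rangle$. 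The Lipschitz condition (G1) bounds the It\^o correction by $L_G^2\|w\|_H^2$; for the nonlinear term, I would invoke the planar Ladyzhenskaya-type estimate $|\langle B(w,u), w\rangle| \le c\|w\|_H\|w\|_V\|u\|_V$ and Young's inequality to obtain (the constant $c$ being absorbed through the choice of splitting)
\begin{equation*}
2|\langle B(w,u), w\rangle| \le \nu \|w\|_V^2 + \frac{1}{\nu}\|u\|_V^2\|w\|_H^2.
\end{equation*}

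\textbf{Exponential weight and sign.} Next I would introduce the process
\begin{equation*}
\Phi(t) := \Bigl(\frac{\nu\lambda_N}{2} - L_G^2\Bigr) t - \frac{1}{\nu}\int_0^t \|u(s)\|_V^2\, \mathrm{d}s,
\end{equation*}
whose absolutely continuous derivative is tailored to annihilate exactly the two surviving bad drifts $L_G^2\|w\|_H^2$ and $\frac{1}{\nu}\|u\|_V^2\|w\|_H^2$. Since $\Phi$ is of bounded variation, applying It\^o's formula to $e^{\Phi(t)}\|w(t)\|_H^2$ and combining with the previous displays yields
\begin{equation*}
\mathrm{d}\bigl(e^\Phi \|w\|_H^2\bigr) \le e^\Phi \Bigl[-\nu\|w\|_V^2 - 2\lambda\|P_N w\|_H^2 + \frac{\nu\lambda_N}{2}\|w\|_H^2\Bigr]\, \mathrm{d}t + e^\Phi\, \mathrm{d}M_t.
\end{equation*}
Specialising to $\lambda = \nu\lambda_N/2$ turns the nudging contribution into $-\nu\lambda_N\|P_N w\|_H^2$, and splitting $w = P_N w + Q_N w$ together with the generalized Poincar\'e inequality \eqref{gen_Poin} gives
\begin{equation*}
-\nu\|w\|_V^2 - \nu\lambda_N\|P_N w\|_H^2 \le -\nu\lambda_N\|Q_N w\|_H^2 - \nu\lambda_N\|P_N w\|_H^2 = -\nu\lambda_N\|w\|_H^2,
\end{equation*}
so the bracket is bounded above by $-\frac{\nu\lambda_N}{2}\|w\|_H^2$ and
\begin{equation*}
\mathrm{d}\bigl(e^\Phi \|w\|_H^2\bigr) + \frac{\nu\lambda_N}{2}\, e^\Phi \|w\|_H^2\, \mathrm{d}t \le e^\Phi\, \mathrm{d}M_t.
\end{equation*}

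\textbf{Localization and conclusion.} Integrating from $0$ to $t\wedge\tau$ and taking expectation would immediately produce \eqref{FP_est}, provided the stochastic integral has vanishing mean. Since $e^\Phi$ and the integrand of $M$ are not a priori integrable in strong norms, I would localize with $\sigma_n := \inf\{s : \int_0^s \|u(\zeta)\|_V^2\, \mathrm{d}\zeta + \|u(s)\|_H^2 + \|v(s)\|_H^2\ge n\}$; on $[0, t\wedge\tau\wedge\sigma_n]$ the weight $e^\Phi$ is bounded and the stopped stochastic integral is a true martingale with zero mean. Passing $n \to \infty$, Fatou's lemma on the pointwise term and monotone convergence on the time integral (both non-negative) deliver \eqref{FP_est}, with $\sigma_n \uparrow \infty$ a.s.\ ensured by the path regularity from Theorem \ref{ex_uniq_sol}. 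The main obstacle is the constant-bookkeeping in the Young step: the $\frac{1}{\nu}$ coefficient in $\Phi$, the cancellation of $L_G^2\|w\|_H^2$ arising from (G1), and the precise choice $\lambda = \nu\lambda_N/2$ must all conspire so that the generalized Poincar\'e bound is just tight enough to deliver the coercive drift $-\frac{\nu\lambda_N}{2}\, e^\Phi\|w\|_H^2$ after absorbing the bilinear contribution.
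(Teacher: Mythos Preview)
Your proof is correct and follows essentially the same route as the paper: derive the differential inequality for $\|u-v\|_H^2$ via It\^o's formula, the 2D bilinear estimate, and (G1); introduce the exponential weight $\Phi$ (the paper calls it $\Gamma$); use the generalized Poincar\'e inequality \eqref{gen_Poin} with the choice $\lambda=\nu\lambda_N/2$ to extract the coercive drift $-\tfrac{\nu\lambda_N}{2}e^{\Phi}\|w\|_H^2$; then integrate up to $t\wedge\tau$ and take expectation. Your explicit localization via $\sigma_n$ and the Fatou/monotone-convergence passage is in fact more careful than the paper, which simply asserts that the stopped stochastic integral has zero mean.
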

\begin{proof}

Given $u, v$ satisfying equations \eqref{NS_abs} and \eqref{NS_abs_nud} respectively, we obtain the evolution of the difference  $r:=u-v$
\begin{equation}
\begin{cases}
{\rm d}r+ \left[ \nu Ar+B(r,u)+B(v,r)+\lambda P_Nr\right]\,{\rm d}t=(G(u)-G(v))\,{\rm d}W
\\
r(0)=u_0-v_0.
\end{cases}
\end{equation}
We apply the It\^o formula to the functional $\|r(t)\|^2_H$. Exploiting \eqref{B=0}, we obtain, for any $t\ge 0$, $\mathbb{P}$-a.s.,
\begin{align*}
\frac 12 {\rm d}\|r(t)\|^2_H+\nu \|\nabla r(t)\|^2_H \, {\rm d}t
&=\left[- \langle B(r(t), u(t)), r(t)\rangle- \lambda \|P_Nr\|^2_H+ \frac12\|G(u(t))-G(v(t))\|^2_{L_{HS}(U,H)}\right]\, {\rm d}t\\
& \qquad + \langle r(t), [ G(u(t))-G(v(t))]\, {\rm d}W(t)\rangle.
\end{align*}
The Gagliardo-Nierenberg and the Young inequality yield
\begin{align*}
|\langle B(r(t), u(t)), r(t)\rangle| 
&\le \|\nabla u(t)\|_H \|r(t)\|^2_{L^4} \le \|\nabla u(t)\|_H \|r(t)\|_H\|\nabla r(t)\|_H 
\\
& \le \frac{\nu}{2}\|\nabla r(t)\|^2_H + \frac{1}{2\nu}\|u(t)\|^2_V\|r(t)\|^2_H.
\end{align*}
Therefore, from \eqref{Lipschitz_G} we infer 
\begin{equation*}
\frac12{\rm d}\|r(t)\|^2_H + \left[\frac{\nu}{2}\|\nabla r(t)\|^2_H+ \lambda \|P_Nr(t)\|^2_H\right]\, {\rm d}t\le \left( \frac{L_G^2}{2}+ \frac{1}{2\nu}\|u(t)\|^2_V\right)\|r(t)\|^2_H\, {\rm d}t + {\rm d}M(t),\end{equation*}
where we set
\begin{equation*}
M(t):=\int_0^t\langle  r(s) ,\, [G(u(t))-G(v(s))]{\rm d}W(s)\rangle.
\end{equation*}
Thanks to the generalized inverse Poincar\'e inequality \eqref{gen_Poin} we obtain 
\begin{align*}
\frac{\nu}{2}\|\nabla r(t)\|^2_H +\lambda \|P_Nr(t)\|^2_H 
&\ge \frac{\nu}{2}\|\nabla Q_Nr(t)\|^2_H +\lambda \|P_Nr(t)\|^2_H 
\\
&\ge \frac{\nu\lambda_N}{2}\|Q_Nr(t)\|^2_H +\lambda \|P_Nr(t)\|^2_H 
\end{align*}
and  by choosing $\lambda= \frac{\nu}{2}\lambda_N$ 
the latter sum equals $\frac{\nu\lambda_N}{2} \|r(t)\|^2_H$.

Thus we finally obtain
\begin{equation}
\label{sti_0}
{\rm d}\|r(t)\|^2_H + \left(\nu\lambda_N- L_G^2-\frac{1}{\nu}\|u(t)\|^2_V\right) \|r(t)\|^2_H\,{\rm d}t \le {\rm d}M(t).
\end{equation}
We set
\begin{equation*}
\Gamma(t):= \left(\frac{\nu\lambda_N}{2}-L_G^2\right) t-\frac{1}{\nu}\int_0^t\|u(s)\|^2_V\, {\rm d}s 
\end{equation*}
and we rewrite \eqref{sti_0} as
\begin{equation*}
{\rm d}\|r(t)\|^2_H + \left(\frac{\nu\lambda_N}{2} \|r(t)\|^2_H+ \Gamma^\prime(t)\|r(t)\|^2_H\right)  {\rm d}t
 \le {\rm d}M(t).
\end{equation*}
Multiplying  both members of the above expression by $e^{\Gamma(t)}$ and noticing that 
${\rm d}(e^{\Gamma(t)} \|r(t)|^2_H)
=
e^{\Gamma(t)} {\rm d}\|r(t)\|^2_H+ \Gamma^\prime(t)e^{\Gamma(t)} \|r(t)\|^2_H$, we get
\begin{equation*}
{\rm d}\left(e^{\Gamma(t)}\|r(t)\|^2_H\right)+ \frac{\nu \lambda_N}{2}e^{\Gamma(t)}\|r(t)\|^2_H {\rm d}t
\le e^{\Gamma(t)}{\rm d}M(t).
\end{equation*}
Integrating  in time  this bound up to a stopping time $\tau$ and taking the expected value we infer 
\[
\mathbb E \left[e^{\Gamma(t\wedge \tau)} \|r(t\wedge \tau)\|_H^2\right]
+\frac{\nu\lambda_N}{2} \mathbb E  \int_0^{t \wedge \tau} e^{\Gamma(s)} \|r(s)\|_H^2 {\rm d}s
\le 
\|r(0)\|_H^2 .
\]
This is \eqref{FP_est}.
\end{proof}

In order to control the integrating factor that appears in \eqref{FP_est}, we will make a suitable choice of the stopping time. For $R, \beta>0$, let
\begin{equation}
\label{tau_R_beta}
\tau_{R, \beta} := 
\inf \left\{r \ge 0: \frac{1}{\nu} \int_0^r \|u(s)\|^2_V\, {\rm d}s +\left(L_G^2-\frac{\nu \lambda_N}{4}\right)r - \beta \ge R\right\}
\end{equation} 
and $\tau_{R, \beta}=+\infty$ if the  set is empty, i.e. if 
\[
\frac{1}{\nu} \int_0^t \|u(s)\|^2_V\, {\rm d}s +\left(L_G^2-\frac{\nu \lambda_N}{4}\right)t - \beta < R
\qquad \forall t\ge 0.
\]

Here $N$ is the the parameter  of the finite dimensional control that 
 appears in the nudged equation \eqref{NS_abs_nud}, see Notation \ref{notation_N}.
The parameter $\beta$ will be useful to track the dependence on the initial data $u_0, v_0$ in subsequent estimates on $\tau_{R, \beta}$, see Propositions \ref{tau_exp}, \ref{tau_pol_ii} and \ref{tau_pol_iii}.

From the definition of $\tau_{R,\beta}$ in \eqref{tau_R_beta} we immediately get the following corollary of Proposition \ref{FP_thm}.
\begin{corollary}
\label{cor_tau_R_beta}
Under the same conditions as the Proposition \ref{FP_thm}, for any $u_0, v_0 \in H$ and any $R, \beta\ge 0$
\begin{equation*}
\mathbb{E} \left[{\pmb 1}_{(\tau_{R, \beta}=\infty)}\|u(t)-v(t)\|^2_H\right] \le e^{R+ \beta -\frac{\nu \lambda_N}{4}t}\|u_0-v_0\|^2_H.
\end{equation*}
\end{corollary}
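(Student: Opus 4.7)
The plan is to specialize Proposition \ref{FP_thm} to the stopping time $\tau = \tau_{R,\beta}$ and then use the defining property of this stopping time to lower-bound the integrating factor on the event $\{\tau_{R,\beta} = \infty\}$. Since the second (integral) term on the left-hand side of \eqref{FP_est} is nonnegative, I would first discard it to keep the cleaner inequality
\[
\mathbb{E}\left[e^{\Gamma(t \wedge \tau_{R,\beta})}\|u(t \wedge \tau_{R,\beta}) - v(t \wedge \tau_{R,\beta})\|_H^2\right] \le \|u_0 - v_0\|_H^2,
\]
where $\Gamma(t) = \bigl(\tfrac{\nu\lambda_N}{2} - L_G^2\bigr)t - \tfrac{1}{\nu}\int_0^t \|u(s)\|_V^2\,{\rm d}s$ is the exponent appearing in Proposition \ref{FP_thm}.

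Next I would multiply the integrand by the indicator $\pmb{1}_{(\tau_{R,\beta} = \infty)}$. On this event, $t \wedge \tau_{R,\beta} = t$ for every $t \ge 0$, so the random integrand simplifies to $e^{\Gamma(t)}\|u(t)-v(t)\|_H^2$. The key observation is that the definition \eqref{tau_R_beta} of $\tau_{R,\beta}$ gives, on $\{\tau_{R,\beta} = \infty\}$, the pathwise bound
\[
\frac{1}{\nu}\int_0^t \|u(s)\|_V^2\,{\rm d}s < R + \beta - \left(L_G^2 - \frac{\nu\lambda_N}{4}\right)t \qquad \forall\, t \ge 0,
\]
which, substituted into the expression for $\Gamma(t)$, yields the lower bound
\[
\Gamma(t) \ge \frac{\nu\lambda_N}{4}\,t - R - \beta \qquad \text{on } \{\tau_{R,\beta} = \infty\}.
\]

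Combining these two facts, I factor out the deterministic quantity $e^{\frac{\nu\lambda_N}{4}t - R - \beta}$ from the expectation and rearrange to obtain the claimed estimate
\[
\mathbb{E}\left[\pmb{1}_{(\tau_{R,\beta}=\infty)}\|u(t)-v(t)\|_H^2\right] \le e^{R + \beta - \frac{\nu\lambda_N}{4}t}\|u_0 - v_0\|_H^2.
\]
There is no real obstacle here: this is a short deterministic manipulation of the stochastic bound from Proposition \ref{FP_thm}, the entire point of introducing $\tau_{R,\beta}$ being precisely to convert the random integrating factor $e^{\Gamma(t)}$ into an explicit exponential decay rate. The genuinely substantive work lies upstream, in Proposition \ref{FP_thm} and downstream, in the forthcoming control of $\mathbb{P}(\tau_{R,\beta} < \infty)$ via $R$.
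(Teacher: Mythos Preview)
Your proof is correct and follows exactly the same approach as the paper's own proof, which consists of the single remark that on $\{\tau_{R,\beta}=\infty\}$ one has $\Gamma(t)\ge \frac{\nu\lambda_N}{4}t - R - \beta$. You have simply spelled out the details of that one-line observation.
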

\begin{proof}
It is enough to remark that if $\tau_{R, \beta}=\infty$, then 
$\frac{\nu \lambda_N}{4} t-\beta-R\le \Gamma(t)$ for any $t\ge 0$.
\end{proof}

\subsection{Decay estimates}
\label{decay_sec}
Let $\tau_{R, \beta}$ be the stopping time defined in \eqref{tau_R_beta}. In this Section we estimate the probability $\mathbb{P}(\tau_{R, \beta}<\infty)$ in terms of the parameter $R$. 
Under Assumption (G2)(i) we obtain an exponential decay in $R$ (Proposition \ref{tau_exp}), whereas under Assumption (G2) either (ii) or (iii), we obtain a polynomial decay in $R$ (Propositions \ref{tau_pol_ii} and \ref{tau_pol_iii}). 

\begin{proposition}
\label{tau_exp}
Assume (G1) and (G2)(i). 
Consider the stopping time $\tau_{R, \beta}$ defined in \eqref{tau_R_beta}, 
where $u$ is the solution of the Navier-Stokes equation \eqref{NS_abs}.
  If
\begin{equation}
\label{cond_beta_i}
\beta\ge\frac{2}{\nu^2} \|u_0\|^2_H,
\end{equation}
then there exists  a positive integer $\bar N=\bar N(L_G, K_1, \nu, \|f\|_{V^*})$ such that for  any 
$N \ge \bar N$  we have 
\begin{equation}
\label{tau_est_i}
\mathbb{P}(\tau_{R, \beta}< \infty) 
\le 
e^{-CR},
\end{equation}
where $C=C(\lambda_1,\nu, K_1)$ is a positive constant independent of $R$, $\beta$ and $u_0$.
\end{proposition}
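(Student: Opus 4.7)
The plan is to extract an exponential supermartingale from the standard energy estimate for $u$ and use it, together with the definition of $\tau_{R,\beta}$, to convert the stopping-time event into a large-deviation event for the driving martingale.

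First, applying It\^o to $\|u(t)\|_H^2$, exploiting \eqref{B=0}, Young's inequality on $\langle f,u\rangle$, and the uniform bound $\|G(u)\|_{L_{HS}(U,H)}\le K_1$ from (G2)(i), I would derive the basic energy estimate
\[
\|u(t)\|_H^2 + \nu \int_0^t \|u(s)\|_V^2 \, \df s \le \|u_0\|_H^2 + C_0 t + 2 M(t), \qquad C_0 := K_1^2 + \tfrac{1}{\nu}\|f\|_{V^*}^2,
\]
where $M(t):=\int_0^t \langle u(s), G(u(s))\,\df W(s)\rangle$ is a continuous local martingale whose quadratic variation satisfies
\[
\langle M\rangle_t \le K_1^2 \int_0^t \|u(s)\|_H^2\, \df s \le \frac{K_1^2}{\lambda_1}\int_0^t \|u(s)\|_V^2\, \df s,
\]
using \eqref{lambda_1} and $\|G(u)^\ast u\|_U \le K_1\|u\|_H$.

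Next, let $J(t):=\int_0^t\|u(s)\|_V^2\df s$ and consider the non-negative supermartingale $E_\alpha(t):=\exp(\alpha M(t)-\tfrac{\alpha^2}{2}\langle M\rangle_t)$. Choosing $\alpha:=\tfrac{\nu\lambda_1}{2K_1^2}$, the energy bound gives $2M(t)\ge \nu J(t)-\|u_0\|_H^2-C_0 t$, and combining with the upper bound on $\langle M\rangle_t$ I obtain
\[
E_\alpha(t) \ge \exp\!\left(\kappa\, J(t) - \tfrac{\alpha}{2}\|u_0\|_H^2 - \tfrac{\alpha C_0}{2}t\right), \qquad \kappa:=\tfrac{\nu^2\lambda_1}{8K_1^2}.
\]
On $\{\tau_{R,\beta}<\infty\}$, by continuity the defining inequality \eqref{tau_R_beta} is attained at $\tau:=\tau_{R,\beta}$, so $J(\tau)=\nu(R+\beta)+\nu\bigl(\tfrac{\nu\lambda_N}{4}-L_G^2\bigr)\tau$. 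Substituting and regrouping,
\[
E_\alpha(\tau)\ge \exp\!\left(\kappa\nu R + \Bigl[\kappa\nu\beta - \tfrac{\alpha}{2}\|u_0\|_H^2\Bigr] + \Bigl[\kappa\nu\bigl(\tfrac{\nu\lambda_N}{4}-L_G^2\bigr)-\tfrac{\alpha C_0}{2}\Bigr]\tau\right).
\]
I pick $\bar N$ so that for $N\ge\bar N$ one has $\tfrac{\nu\lambda_N}{4}\ge L_G^2 + \tfrac{2C_0}{\nu^2}$; this makes the $\tau$-coefficient non-negative and determines the asserted dependence $\bar N=\bar N(L_G,K_1,\nu,\|f\|_{V^*})$. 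A direct calculation with the chosen $\alpha,\kappa$ shows $\kappa\nu = \tfrac{\alpha\nu^2\lambda_1}{4K_1^2}\cdot\tfrac{1}{\alpha}\cdot \alpha/2$, and in particular the hypothesis $\beta\ge \tfrac{2}{\nu^2}\|u_0\|_H^2$ yields $\kappa\nu\beta \ge \tfrac{\alpha}{2}\|u_0\|_H^2$, absorbing the initial-data term.

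Consequently $E_\alpha(\tau)\mathbf 1_{\{\tau<\infty\}}\ge e^{\kappa\nu R}\mathbf 1_{\{\tau<\infty\}}$, and the supermartingale inequality $\mathbb E[E_\alpha(\tau\wedge T)]\le 1$ together with Fatou's lemma as $T\to\infty$ gives $\mathbb P(\tau_{R,\beta}<\infty)\le e^{-CR}$ with $C=\kappa\nu=\tfrac{\nu^3\lambda_1}{8K_1^2}$, depending only on $\lambda_1,\nu,K_1$. The main subtlety is tuning $\alpha$ so that the quadratic variation correction $-\tfrac{\alpha^2}{2}\langle M\rangle_t$ does not swallow the energy gain $\alpha M(t)$; this is precisely where the condition $\beta\ge \tfrac{2}{\nu^2}\|u_0\|_H^2$ from \eqref{cond_beta_i} is calibrated to cancel the $\|u_0\|_H^2$-term, giving a decay constant independent of $\beta$ and $u_0$. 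The second delicate point is that $N$ must be taken large enough to dominate both $L_G^2$ and the deterministic forcing quantity $C_0/\nu^2$ via $\tfrac{\nu\lambda_N}{4}$, which is the infinite-dimensional ingredient that ultimately makes the Foias--Prodi synchronization effective.
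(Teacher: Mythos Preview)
Your proof is correct and follows essentially the same approach as the paper: both combine the energy inequality $\|u(t)\|_H^2+\nu\int_0^t\|u(s)\|_V^2\,\df s\le\|u_0\|_H^2+(K_1^2+\nu^{-1}\|f\|_{V^*}^2)t+2M(t)$ with an exponential (super)martingale bound on $M$; the paper packages the latter as Proposition~\ref{lem_exp} (the exponential martingale inequality for a supremum event) and then does a set inclusion, whereas you evaluate the Dol\'eans exponential $E_\alpha$ directly at $\tau_{R,\beta}$ and apply Fatou, yielding the slightly sharper constant $C=\nu^3\lambda_1/(8K_1^2)$ versus the paper's $\nu^3\lambda_1/(16K_1^2)$. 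One cosmetic remark: the displayed identity ``$\kappa\nu=\tfrac{\alpha\nu^2\lambda_1}{4K_1^2}\cdot\tfrac{1}{\alpha}\cdot\alpha/2$'' in your penultimate paragraph is garbled, but the relation you actually need, namely $\kappa\nu\cdot\tfrac{2}{\nu^2}=\tfrac{\alpha}{2}$ (which is exactly what makes \eqref{cond_beta_i} the right threshold), is correct.
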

\begin{proof}
Keeping in mind the definition \eqref{tau_R_beta} of the stopping time $\tau_{R, \beta}$,  we introduce the set
\begin{equation}
\label{A}
A_{R, \beta}= \left\{\sup_{r \ge 0} \left[\frac{1}{\nu} \int_0^r \|u(s)\|^2_V\, {\rm d}s +\left(L_G^2-\frac{\nu \lambda_N}{4}\right)r - \beta\right] \ge R\right\}
\end{equation}
so that $\mathbb{P}(\tau_{R, \beta}< \infty) \le \mathbb{P}(A_{R, \beta})$. 
Thus we need to estimate $\mathbb{P}(A_{R, \beta})$.

Its complementary set can be written as follows
\begin{equation}
\label{A^C}
A_{R, \beta}^c=\left\{ \frac \nu 2  \int_0^r\|u(s)\|^2_V\, {\rm d}s < \frac{\nu^2}2 \left[\left(\frac{\nu \lambda_N}{4}-L_G^2\right)r+\beta +R\right] \text{ for any }\ r \ge 0
\right\}.
\end{equation}
We take $\bar N>0$ large enough such that 
\begin{equation}
\frac{\nu^2}2 \left( \frac{\nu\lambda_{\bar N}}{4}-L_G^2\right)>K_1^2+ \frac{1}{\nu}\|f\|^2_{V^*}.
\end{equation}
We recall that $K_1$ is the constant appearing in Assumption (G2)(i).
Choosing $\beta$ as in \eqref{cond_beta_i} and setting 
$
\bar R:=\frac{\nu^2}2 R$, for any   $N \ge \bar N$
 we get 
\begin{equation*}
A_{R, \beta}^c 
\supseteq 
\left\{ \frac \nu 2 \int_0^r \|u(s)\|^2_V\, {\rm d}s <\left( K_1^2+ \frac{1}{\nu} \|f\|^2_{V^*} \right)r+ \bar R + \|u_0\|^2_H  \text{ for any } r \ge 0\right\}
\end{equation*}
i.e.
\[
A_{R, \beta} \subseteq 
\left\{ \sup_{r\ge 0} \left[ \frac \nu 2 \int_0^r \|u(s)\|^2_V\, {\rm d}s -\left( K_1^2+ \frac{1}{\nu} \|f\|^2_{V^*}\right)r-\|u_0\|^2_H \right] \ge \bar R\right\} .
\]
From \eqref{P_est_exp} in Proposition \ref{lem_exp} we therefore conclude that 
\begin{align*}
 \mathbb{P}(A_{R, \beta}) \le 
 e^{-\frac{\nu\lambda_1}{8K_1^2}\bar R}.
 \end{align*}
 Since $\mathbb{P}(\tau_{R, \beta}< \infty) \le \mathbb{P}(A_{R, \beta})$, 
 keeping in mind the definition of $\bar R$ the estimate  \eqref{tau_est_i} immediately follows.
\end{proof}

\begin{proposition}
\label{tau_pol_ii}
Assume (G1) and (G2)(ii). 
Consider the stopping time $\tau_{R, \beta}$ defined in \eqref{tau_R_beta}, 
where $u$ is the solution of the Navier-Stokes equation \eqref{NS_abs}.
  If
\begin{equation}
\label{cond_beta_ii}
  \beta \ge \frac{1}{\nu^2} (C_b+\|u_0\|^2_H),
\end{equation}
with $C_b$ the constant that appears in estimate \eqref{P_est_ii},
then  there exists  a positive integer $\bar N=\bar N(\nu, L_G, K_2, \tilde{K}_2, \lambda_1, \gamma, \|f\|_{V^*})$ 
 such that for  any $N \ge \bar N$  we have 
\begin{equation}
\label{tau_est_ii}
\mathbb{P}(\tau_{R, \beta}< \infty) 
\le 
 \frac{C(1+ \|u_0\|_H^{4(p+1)})}{R^p},
 \end{equation}
 for any $p>0$,  where $C=C(\lambda_1,p, \nu, K_2, \tilde{K}_2, \gamma, \|f\|_{V^*})$
  is a positive constant  independent of $R, \beta$ and $u_0$.
 \end{proposition}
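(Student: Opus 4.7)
The plan is to mirror the structure of the proof of Proposition \ref{tau_exp}, replacing the exponential tail bound with a polynomial one that is tailored to the sublinear growth of the covariance operator. As a first step I would introduce
\begin{equation*}
A_{R, \beta}= \left\{\sup_{r \ge 0} \left[\frac{1}{\nu} \int_0^r \|u(s)\|^2_V\, {\rm d}s +\left(L_G^2-\frac{\nu \lambda_N}{4}\right)r - \beta\right] \ge R\right\},
\end{equation*}
so that $\mathbb{P}(\tau_{R, \beta}< \infty) \le \mathbb{P}(A_{R, \beta})$, and rewrite the complement exactly as in \eqref{A^C}. Multiplying through by $\nu^2/2$ puts us in the regime where the integral $\tfrac{\nu}{2}\int_0^r \|u(s)\|^2_V\,{\rm d}s$ competes against a linear drift in $r$ plus the cut-off $R$.

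To handle the sublinear growth encoded in (G2)(ii), I would invoke Young's inequality in the form $\|u\|^{2\gamma}_H \le \eta \|u\|^2_H + C(\eta, \gamma, \tilde K_2)$ with $\eta>0$ chosen sufficiently small, absorbing $\eta \|u\|^2_H$ into a fraction of $\|u\|^2_V/\lambda_1$ by means of \eqref{lambda_1}. This produces an effective constant $C_1 = C_1(\nu, K_2, \tilde K_2, \gamma, \lambda_1, \|f\|_{V^*})$ playing the role of $K_1^2 + \|f\|^2_{V^*}/\nu$ from the bounded case. I would then choose $\bar N$ large enough that
\begin{equation*}
\frac{\nu^2}{2}\left(\frac{\nu \lambda_{\bar N}}{4}-L_G^2\right) > C_1,
\end{equation*}
so that, setting $\bar R := \tfrac{\nu^2}{2}R$, condition \eqref{cond_beta_ii} yields the inclusion
\begin{equation*}
A_{R, \beta} \subseteq \left\{\sup_{r \ge 0} \left[\frac{\nu}{2}\int_0^r \|u(s)\|^2_V\,{\rm d}s - C_1 r\right] \ge \bar R + C_b + \|u_0\|^2_H\right\}
\end{equation*}
for every $N \ge \bar N$, in complete analogy with the derivation carried out in the proof of Proposition \ref{tau_exp}.

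At this point I would apply the polynomial maximal-type a priori estimate \eqref{P_est_ii} from the appendix, which bounds the right-hand probability above by $C(1+\|u_0\|^{4(p+1)}_H)/\bar R^p$ for any $p > 0$; substituting back $\bar R = \nu^2 R/2$ and absorbing the $\nu$-powers into the constant yields \eqref{tau_est_ii}. The main technical obstacle is exactly what makes this case distinct from the bounded one: the exponential-martingale argument underlying Proposition \ref{lem_exp} is no longer available, since the $u$-dependent diffusion does not yield an a priori control on the Novikov exponential. Instead, \eqref{P_est_ii} must be derived by Markov's inequality applied to the $(p+1)$-th moment of $\sup_{r\ge 0}\bigl[\tfrac{\nu}{2}\int_0^r \|u\|^2_V\,{\rm d}s - C_1 r - C_b - \|u_0\|^2_H\bigr]_+$; the exponent $4(p+1)$ in the $\|u_0\|_H$ dependence then emerges naturally from Burkholder--Davis--Gundy applied to the stochastic integral in the energy balance, combined with an iterated use of Young's inequality to transfer the $\|u\|^{2\gamma}_H$ terms into the dissipative side.
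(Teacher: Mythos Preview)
Your proposal is correct and follows essentially the same route as the paper: pass from $\tau_{R,\beta}$ to the event $A_{R,\beta}$, choose $\bar N$ so that the linear drift dominates, and then invoke the polynomial tail estimate \eqref{P_est_ii} with $q=2(p+1)$. The only superfluous step is your separate derivation of a constant $C_1$ via Young's inequality---that work is already absorbed into $C_b$ in the appendix, and the paper simply uses $C_b$ directly both in the choice of $\bar N$ (requiring $\nu^2(\tfrac{\nu\lambda_{\bar N}}{4}-L_G^2)>C_b$) and in the inclusion $A_{R,\beta}\subseteq\bigl\{\sup_{r\ge 0}[\nu\int_0^r\|u(s)\|^2_V\,{\rm d}s-C_b(r+1)-\|u_0\|^2_H]\ge \bar R\bigr\}$.
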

 \begin{proof}
The proof follows the line of the proof of Proposition \ref{tau_exp}. Consider the set $A_{R, \beta}$ and its complementary  set $A_{R, \beta}^c$ introduced in \eqref{A} and \eqref{A^C}, respectively.
We take $\bar N>0$ large enough such that 
\begin{equation}
  \nu^2  \left( \frac{\nu\lambda_{\bar N}}{4}-L_G^2\right)> C_b,
\end{equation}
where $C_b=C_b(K_2, \tilde{K}_2, \lambda_1, \nu, \gamma, \|f\|_{V^*})$ is the constant appearing in \eqref{P_est_ii} of Proposition \ref{lem_pol}.
Choosing $\beta$ as in \eqref{cond_beta_ii} and setting 
$\bar R:=\frac{\nu^2R}{2}$, for any $N \ge \bar N$  we get 
\[
A_{R, \beta}^c \supseteq \left\{ \nu  \int_0^r \|u(s)\|^2_V\, {\rm d}s -C_b(r+1)< \bar R + \|u_0\|^2_H,  \text{ for any } r \ge 0\right\}
\]
i.e.
\[
A_{R, \beta} \subseteq 
\left\{ \sup_{r\ge 0} \left[\nu \int_0^r \|u(s)\|^2_V\, {\rm d}s -C_b(r+1)-\|u_0\|^2_H \right] \ge \bar R\right\} .
\]
From \eqref{P_est_ii} we therefore conclude that, for any $q>2$, 
\begin{align*}
 \mathbb{P}(A_{R, \beta}) \le 
 \frac{C(1+\|u_0\|_H^{2q})}{\bar R^{\frac q2-1}}
 \end{align*}
 where $C=C(\lambda_1, q, \nu, K_2, \tilde{K}_2, \gamma, \|f\|_{V^*})$.
 Since $\mathbb{P}(\tau_{R, \beta}< \infty) \le \mathbb{P}(A_{R, \beta})$, 
 the estimate \eqref{tau_est_ii} immediately follows.
 \end{proof}

 \begin{proposition}
 \label{tau_pol_iii}
Assume (G1), (G2)(iii) and
\begin{equation}\label{condizione32}
\nu >  \frac{3\tilde{K}_3^2}{2\lambda_1}.
\end{equation}
Consider the stopping time $\tau_{R, \beta}$ defined in \eqref{tau_R_beta}, 
where $u$ is the solution of the Navier-Stokes equation \eqref{NS_abs}.
If
\begin{equation}
\label{cond_beta}
\beta \ge \frac{C_b+\|u_0\|^2_H} {\nu(\nu- \frac {\tilde{K_3}^2}{2\lambda_1})} ,
\end{equation}
with $C_b$ the constant that appears in \eqref{P_est_iii}, 
then   there exists  a positive integer $\bar N=\bar N(\nu, L_G, K_3,\tilde{K_3}, \lambda_1, \|f\|_{V^*})$  such that for  any $N \ge \bar N$ we have 
\begin{equation}
\label{tau_est_iii}
\mathbb{P}(\tau_{R, \beta}< \infty) 
\le 
 \frac{C(1+ \|u_0\|_H^{4(p+1)})}{R^{p}}
\end{equation}
for any $p\in \left(0,\frac{\nu \lambda_1}{2 \tilde K_3^2}-\frac34\right)$, 
where $C=C(\lambda_1,q,\nu, K_3, \tilde{K_3}, \|f\|_{V^*})>0$  is a positive constant  independent of $R$, $\beta$ and $u_0$.
\end{proposition}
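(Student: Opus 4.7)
The plan is to follow the same two-step template used for Propositions \ref{tau_exp} and \ref{tau_pol_ii}: first reduce the probability $\mathbb{P}(\tau_{R,\beta}<\infty)$ to the probability that a certain running supremum of the energy $\int_0^r\|u(s)\|_V^2\,\mathrm{d}s$ exceeds a threshold proportional to $R$, and then invoke the a priori estimate tailored to assumption (G2)(iii) (estimate \eqref{P_est_iii} in the appendix) to bound that tail probability polynomially. The condition $\nu>\frac{3\tilde K_3^2}{2\lambda_1}$ is exactly what is needed for such a polynomial a priori tail bound to be available with a nontrivial range of exponents.

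More concretely, I would introduce the set
\[
A_{R,\beta}=\Bigl\{\sup_{r\ge 0}\Bigl[\tfrac{1}{\nu}\!\int_0^r\!\|u(s)\|_V^2\,\mathrm{d}s+\bigl(L_G^2-\tfrac{\nu\lambda_N}{4}\bigr)r-\beta\Bigr]\ge R\Bigr\},
\]
so that $\{\tau_{R,\beta}<\infty\}\subseteq A_{R,\beta}$, and rewrite its complement as in \eqref{A^C}. Then I would choose $\bar N$ large enough so that $\nu\bigl(\nu-\tfrac{\tilde K_3^2}{2\lambda_1}\bigr)\bigl(\tfrac{\nu\lambda_{\bar N}}{4}-L_G^2\bigr)>C_b$, with $C_b$ the constant appearing in \eqref{P_est_iii}. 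Combined with the lower bound on $\beta$ in \eqref{cond_beta} and setting $\bar R:=\nu\bigl(\nu-\tfrac{\tilde K_3^2}{2\lambda_1}\bigr)R$, this forces
\[
A_{R,\beta}\subseteq\Bigl\{\sup_{r\ge 0}\Bigl[\nu\!\int_0^r\!\|u(s)\|_V^2\,\mathrm{d}s-C_b(r+1)-\|u_0\|_H^2\Bigr]\ge \bar R\Bigr\},
\]
exactly in the form needed to apply \eqref{P_est_iii}.

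The main obstacle, and the reason this proposition is more delicate than Propositions \ref{tau_exp} and \ref{tau_pol_ii}, is tracking the dependence of the admissible exponent on the parameters $\nu$, $\lambda_1$, $\tilde K_3$. Under (G2)(iii), because $G$ has linear growth, only moments $\mathbb{E}\|u(t)\|_H^{2q}$ up to a ceiling determined by the ratio $\nu\lambda_1/\tilde K_3^2$ remain finite along the flow; this restriction propagates to the Burkholder--Davis--Gundy/exponential martingale bound used in \eqref{P_est_iii} and produces an exponent $q$ restricted to $q<\tfrac{\nu\lambda_1}{\tilde K_3^2}$. A standard Markov-type argument then gives $\mathbb{P}(A_{R,\beta})\lesssim (1+\|u_0\|_H^{2q})/\bar R^{q/2-1}$, and reading off $p=q/2-1$ yields the stated constraint $p<\tfrac{\nu\lambda_1}{2\tilde K_3^2}-\tfrac34$ as well as the $\|u_0\|_H^{4(p+1)}$ dependence. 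The final bound \eqref{tau_est_iii} then follows from $\mathbb{P}(\tau_{R,\beta}<\infty)\le\mathbb{P}(A_{R,\beta})$, after absorbing the constant factor $\bigl(\nu(\nu-\tilde K_3^2/(2\lambda_1))\bigr)^{-p}$ coming from the rescaling $R\mapsto\bar R$ into the constant $C$.
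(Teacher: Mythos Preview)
Your approach is essentially the paper's, but there are two small slips worth flagging. First, after multiplying by $\nu\bigl(\nu-\tfrac{\tilde K_3^2}{2\lambda_1}\bigr)$ the coefficient in front of $\int_0^r\|u(s)\|_V^2\,\mathrm{d}s$ is $\nu-\tfrac{\tilde K_3^2}{2\lambda_1}$, not $\nu$; this is precisely the form of \eqref{P_est_iii} under (G2)(iii), and is the whole point of distinguishing this case from Proposition~\ref{tau_pol_ii}. With $\nu$ in front your displayed set is not the one to which \eqref{P_est_iii} applies. Second, the ceiling on $q$ coming from \eqref{P_est_iii} is $q<\tfrac12+\tfrac{\nu\lambda_1}{\tilde K_3^2}$ (it arises from needing the $2q$-th moment in Lemma~\ref{lem_A2}, where the admissible range is $2q<1+\tfrac{2\nu\lambda_1}{\tilde K_3^2}$), not $q<\tfrac{\nu\lambda_1}{\tilde K_3^2}$; only the former gives $p=\tfrac q2-1<\tfrac{\nu\lambda_1}{2\tilde K_3^2}-\tfrac34$ as stated. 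With these two corrections your argument coincides with the paper's proof.
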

\begin{proof}
We introduce the set $A_{R, \beta}$ as in \eqref{A} and write its complementary set as follows
\begin{equation*}
A_{R, \beta}^c
=
\left\{\left(\nu-\frac{\tilde{K}_3^2}{2\lambda_1} \right) \int_0^r\|u(s)\|^2_V\, {\rm d}s 
\le
\nu \left(\nu-\frac{\tilde{K}_3^2}{2\lambda_1} \right)\left[\left(\frac{\nu \lambda_N}{4}-L_G^2\right)r+\beta +R\right], \text{ for all } r \ge 0
\right\}.
\end{equation*}
We take $\bar N>0$ large enough such that 
\begin{equation}
\nu\left(\nu-\frac{\tilde{K}_3^2}{2\lambda_1}\right)\left( \frac{\nu\lambda_{\bar N}}{4}-L_G^2\right)>C_b,
\end{equation}
with $C_b$ the constant appearing in \eqref{P_est_iii}. Choosing $\beta$ as in \eqref{cond_beta} and setting 
\begin{equation}
\label{R_bar}
\bar R:=\nu\left(\nu-\frac{\tilde{K}_3^2}{2\lambda_1}\right)R,
\end{equation} 
for any   $N\ge \bar N$ we get  
\begin{equation*}
A_{R, \beta}^c \supseteq \left\{ \left(\nu-\frac{\tilde{K}_3^2}{2\lambda_1}\right) \int_0^r \|u(s)\|^2_V\, {\rm d}s -C_b(r+1)\le \bar R + \|u_0\|^2_H, \text{ for all }  r \ge 0\right\}.
\end{equation*}
From \eqref{P_est_iii}, provided $\nu> \frac{3\tilde{K}_3^2}{2\lambda_1}$ and $2<q<\frac 12 +\frac{\nu \lambda_1}{\tilde{K}_3^2}$, we therefore conclude that 
\begin{align*}
\mathbb{P}(\tau_{R, \beta}< \infty) \le \mathbb{P}(A_{R, \beta}) \le 
 \frac{C(1+ \|u_0\|_H^{2q})}{\bar R^{\frac q2-1}},
 \end{align*}
where $C=C(\lambda_1,q,\nu, K_3, \tilde{K_3}, \|f\|_{V^*})>0$ is a positive constant independent of $\bar R$. By taking $p=\frac q2 -1$ and keeping in mind \eqref{R_bar}, the estimate \eqref{tau_est_iii} immediately follows with the power $p= \frac q2 -1\in(0,\frac{\nu\lambda_1}{2 \tilde{K}_3^2}-\frac 34)$, since $2<q<\frac 12 +\frac{\nu \lambda_1}{\tilde{K}_3^2}$.
\end{proof}

\begin{remark}
We emphasize the difference between Propositions \ref{tau_pol_ii} and \ref{tau_pol_iii}. In Proposition \ref{tau_pol_ii} we have a polynomial decay in $R$, with an arbitrary exponent $p>0$. In Proposition \ref{tau_pol_iii} the type of decay in $R$ is still polynomial but now the range of admissible exponents $p$ depends on the viscosity coefficient $\nu$ and the constant $\tilde{K}_3$ that, roughly speaking, represents the intensity of the multiplicative part of the noise. In particular, in this latter case, we need to impose the condition $\nu>\frac{3\tilde{K}_3^2}{2\lambda_1}$ on the viscosity coefficient to ensure the existence of an admissible set of exponents $p$. 
\end{remark}

\subsection{The Foias-Prodi estimates}

As a consequence of  Proposition \ref{FP_thm} and Propositions \ref{tau_exp}, \ref{tau_pol_ii} and \ref{tau_pol_iii} we now show that,
provided $N$ is taken sufficiently large, $\mathbb{E}\left[ \|u(t)-v(t)\|^2_H\right]$
vanishes as $t\to+\infty$. 
The convergence rate depends on the growth of $G$ as specified by the three different assumptions (G2).

\begin{theorem}[Foias-Prodi estimates]
\label{FP_cor}
Assume (G1) and  $u_0, v_0 \in H$. Let 
$u$ be the solution of the Navier-Stokes equation \eqref{NS_abs} and $v$ that of its nudged equation\eqref{NS_abs_nud} with $\lambda=\frac{\nu \lambda_N}2$.
\begin{enumerate}[label=$(\roman{*})$]
\item
If (G2)(i) holds, then there exists a positive integer
$\bar N=\bar N(L_G, K_1, \nu, \|f\|_{V^*})$  and positive constants $C$ and $\delta$ such that for any  $N \ge \bar N$ we have 
\begin{equation}
\mathbb{E}\left[ \|u(t)-v(t)\|^2_H \right] \le 
C(1+ \|u_0\|^2_H+ \|v_0\|^2_H) \left(1+e^{\frac{2}{\nu^2}\|u_0\|^2_H}
\right)e^{-\delta t}\qquad \forall t>0.
\end{equation}
Here  $C$ and $\delta$ do not depend on $\|u_0\|_H, \|v_0\|_H$ and $t$
\item
If (G2)(ii) holds, then there exists a positive integer 
$\bar N=\bar N(L_G, K_2,\tilde{K}_2, \nu, \lambda_1, \gamma, \|f\|_{V^*})$ 
and positive constants $C$ and $\alpha$ such that  for any  $N \ge \bar N$ we have 
\begin{equation}
\label{pol_dec_est_ii}
\mathbb{E}\left[ \|u(t)-v(t)\|^2_H \right] 
\le\frac{C}{t^p}(1+ \|u_0\|_H^2+\|v_0\|^2_H)\left(1
+e^{\alpha \|u_0\|_H^2}   
\right)\qquad \forall t>0,
\end{equation}
 where $p$ is any positive number. Here 
$C$ and $\alpha$ do not depend on $\|u_0\|_H, \|v_0\|_H$ and $t$ but depend on $p$.
\item
If (G2)(iii)  and   \eqref{condizione32} hold, 
then there exists a positive integer 
$\bar N=\bar N(L_G, K_3, \tilde{K}_3, \nu, \lambda_1, \|f\|_{V^*})$ and 
 positive constants $C$ and $\alpha$ 
such that  for any  $N \ge \bar N$
 the estimate \eqref{pol_dec_est_ii} holds for any 
$p\in (0,\frac{\nu \lambda_1}{ 4 \tilde{K}_3^2}-\frac 38)$.
\end{enumerate}
\end{theorem}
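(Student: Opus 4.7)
The plan is to combine Corollary \ref{cor_tau_R_beta} with the decay estimates of Propositions \ref{tau_exp}, \ref{tau_pol_ii} and \ref{tau_pol_iii}, tuning the free parameters $R$ and $\beta$ (with the latter determined by $\|u_0\|_H$) as functions of time. Starting from the decomposition
\[
\mathbb{E}\bigl[\|u(t)-v(t)\|_H^2\bigr]
=\mathbb{E}\bigl[\mathbf{1}_{\{\tau_{R,\beta}=\infty\}}\|u(t)-v(t)\|_H^2\bigr]
+\mathbb{E}\bigl[\mathbf{1}_{\{\tau_{R,\beta}<\infty\}}\|u(t)-v(t)\|_H^2\bigr],
\]
I would fix $\beta$ at the minimal admissible value prescribed by \eqref{cond_beta_i}, \eqref{cond_beta_ii} or \eqref{cond_beta}, respectively; in each case $\beta$ is an affine function of $\|u_0\|_H^2$, which is what eventually produces the exponential prefactor in $\|u_0\|_H$ that appears in the statement.

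The first summand is handled directly by Corollary \ref{cor_tau_R_beta}, yielding the bound $e^{R+\beta-\frac{\nu\lambda_N}{4}t}\|u_0-v_0\|_H^2$. For the second summand I would apply the Cauchy--Schwarz inequality,
\[
\mathbb{E}\bigl[\mathbf{1}_{\{\tau_{R,\beta}<\infty\}}\|u(t)-v(t)\|_H^2\bigr]
\le \mathbb{P}(\tau_{R,\beta}<\infty)^{1/2}\,\bigl(\mathbb{E}\|u(t)-v(t)\|_H^4\bigr)^{1/2},
\]
and bound the fourth moment by $C(1+\|u_0\|_H^4+\|v_0\|_H^4)$ via the apriori estimates of Appendix \ref{app_A}, applied both to $u$ and to the nudged process $v$ (the extra term $\lambda P_N(u-v)$ in \eqref{NS_abs_nud} is globally Lipschitz on $H$ and finite-rank, so the standard moment bounds go through without modification, see \cite[Remark 8]{KS}). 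The probability factor is then controlled by the appropriate decay statement of Section \ref{decay_sec}.

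Choosing $R(t)$ optimally is now a balancing exercise. In case (i), setting $R=\eta t$ for a sufficiently small $\eta\in(0,\nu\lambda_N/4)$ makes the first summand decay like $e^{-(\nu\lambda_N/4-\eta)t}e^{\beta}\|u_0-v_0\|_H^2$, while \eqref{tau_est_i} makes the Cauchy--Schwarz term decay like $e^{-C\eta t/2}$; combining the two and factoring out $e^{\beta}\lesssim e^{2\|u_0\|_H^2/\nu^2}$ yields the desired exponential rate. In cases (ii) and (iii), I would take $R=\frac{\nu\lambda_N}{8}t$, so that the first summand is super-polynomially small and absorbs into $C_p t^{-p}$ (for any fixed $p>0$) times $e^{\beta}$. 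The Cauchy--Schwarz piece, by Proposition \ref{tau_pol_ii} or \ref{tau_pol_iii}, is of order $R^{-p}(1+\|u_0\|_H^{2(p+1)})\lesssim t^{-p}(1+\|u_0\|_H^{2(p+1)})$ after applying the square root to their estimates and relabelling the exponent $p\mapsto 2p$; the elementary bound $x^{4(p+1)}\le C_{p,\alpha}e^{\alpha x^2}$ then absorbs the polynomial in $\|u_0\|_H$ into $e^{\alpha\|u_0\|_H^2}$, producing \eqref{pol_dec_est_ii}. In case (iii), the admissible exponent range in Proposition \ref{tau_pol_iii} is $\bigl(0,\tfrac{\nu\lambda_1}{2\tilde K_3^2}-\tfrac 34\bigr)$, which after halving by Cauchy--Schwarz becomes exactly the advertised $\bigl(0,\tfrac{\nu\lambda_1}{4\tilde K_3^2}-\tfrac 38\bigr)$.

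The main obstacle I foresee is simply the careful bookkeeping of how $\beta$, and through it the $\|u_0\|_H$-dependence, propagates through the two summands: this is what forces the exponential prefactor $e^{\alpha\|u_0\|_H^2}$ in the polynomial cases and the factor $e^{2\|u_0\|_H^2/\nu^2}$ in the exponential one, and one must be careful that the $\beta$ contribution never outgrows the $t$-decay after the balancing choice of $R$. A secondary, essentially routine, verification is that the fourth-moment bound on $v$ follows from the same It\^o argument that gives it for $u$, modulo the harmless bounded linear perturbation $\lambda P_N(u-v)$; this uses the boundedness of $P_N$ on $H$ and the $L^2$-moment bounds on $u$ already at hand.
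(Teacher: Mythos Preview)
Your proposal is correct and follows essentially the same approach as the paper: decompose on $\{\tau_{R,\beta}=\infty\}$ versus $\{\tau_{R,\beta}<\infty\}$, apply Corollary~\ref{cor_tau_R_beta} and Cauchy--Schwarz respectively, bound the fourth moment uniformly via the a~priori estimates (the paper uses Lemmata~\ref{lem_A2} and~\ref{lem_v} with $q=4$, noting in a footnote that this is where the constraint $\nu>3\tilde K_3^2/(2\lambda_1)$ enters in case (iii)), and then choose $R$ proportional to $t$. The only cosmetic difference is that the paper takes $R=\tfrac{\nu\lambda_N}{8}t$ uniformly in all three cases rather than introducing a separate small parameter $\eta$ in case (i).
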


\begin{proof}
In all the cases (G2)(i), (G2)(ii) and (G2)(iii) the structure of the proof is the same. We therefore prove all the statements in a unified way.

Let $u$ and $v$ be the solutions to \eqref{NS_abs} and \eqref{NS_abs_nud} starting from $u_0, v_0 \in H$ respectively. 
By means of the H\"older and the Young inequalities, invoking Corollary \ref{cor_tau_R_beta} and estimates \eqref{sup_q_est} and \eqref{est_v} with $q=4$ \begin{footnote}{Notice that considering $q=4$ in  \eqref{sup_q_est} and \eqref{est_v} requires to impose the condition $1+ \frac{2\nu \lambda_1}{\tilde{K}_3^2}>4$, equivalent to $ \nu >\frac{3\tilde K_3^2}{2\lambda_1}$.}\end{footnote}, we infer 
\begin{align}
\label{sti_diff_proof}
\mathbb{E} \left[ \|u(t)-v(t)\|^2_H\right]
&= \mathbb{E} \left[\pmb{1}_{(\tau_{R, \beta =+\infty ) }} \|u(t)-v(t)\|^2_H\right]+ \mathbb{E} \left[\pmb{1}_{(\tau_{R, \beta <+\infty)}} \|u(t)-v(t)\|^2_H\right]
\notag \\
& \le e^{\beta+ R - \frac{\nu \lambda_N}{4}t }\|u_0-v_0\|^2_H+ \left( \mathbb{P}(\tau_{R, \beta}<\infty)\right)^{\frac 12} \left(\mathbb{E}[\|u(t)-v(t)\|^4_H \right)^{\frac 12}
\notag\\
& \le
 C\left(1+\|u_0\|^2_H+ \|v_0\|^2_H \right)\left(\left( \mathbb{P}(\tau_{R, \beta}<\infty)\right)^{\frac 12}+ e^{\beta+ R - \frac{\nu \lambda_N}{4} t}\right),
\end{align}
where $C$ is a positive constant that depends on the parameters of equations \eqref{NS_abs} and \eqref{NS_abs_nud} (see Lemmata \ref{lem_A2} and \ref{lem_v} for the explicit dependence, according to which assumption (G2) we make on the noise) and is independent of $R, \beta, u_0$ and $v_0$.
Now we use the previous bounds on $\tau_{R, \beta}$; by  Propositions \ref{tau_exp}, \ref{tau_pol_ii} and \ref{tau_pol_iii}
for suitably chosen $\beta$ (see \eqref{cond_beta_i}, \eqref{cond_beta_ii}, \eqref{cond_beta}) 
we get  for any $R>0$
\begin{equation*}
\mathbb{P}(\tau_{R, \beta} < \infty)^{\frac 12} \le
\begin{cases}
e^{-CR} & \text{under (G2)(i)}, \quad \text{with $C=C(\lambda_1, \nu, K_1)$}
\\
\frac{C\left(1+\|u_0\|^{2(p+1)}_H\right)}{R^{\frac p2}}, & \text{ for any $p>0$,  under (G2)(ii)}, \quad \text{with $C=C\left(\lambda_1, p, \nu, K_2, \tilde{K}_2,\gamma, \|f\|_{V^*}\right)$}
\\
\frac{C\left(1+\|u_0\|^{2(p+1)}_H\right)}{R^{\frac p2}}, & \text{ for any $p\in (0,\frac{\nu \lambda_1}{2\tilde{K}_3^2}-\frac 34)$ \ under (G2)(iii)}, \quad \text{with $C=C\left(\lambda_1, p, \nu, K_3, \tilde{K}_3, \|f\|_{V^*}\right)$}
\end{cases}
\end{equation*}
where we emphasize that the constants $C$ that appear in the above expressions do not depend on $u_0, v_0, R, \beta$ and $t$. Coming back to estimate \eqref{sti_diff_proof}, 
if we select $R= \frac{\nu \lambda_N}{8}t$, for each $t > 0$ and take $\beta$ according to the lower bounds in  \eqref{cond_beta_i}, \eqref{cond_beta_ii}, \eqref{cond_beta}, we conclude the proof. 
In the cases (G2)(ii) and (G2)(iii) the polynomial dependence on $\|u_0\|_H$ is estimated by an exponential function.
\end{proof}

\section{Ergodic results}
\label{uniq_inv_mea_sec}
In this Section we prove the existence and uniqueness of the invariant measure for \eqref{NS_abs} and the weak convergence to it, also named asymptotic stability of the invariant measure.  As anticipated in Theorem \ref{thm_intro}, in the study of the long time behavior of the solution, working under Assumptions 
(G2)(i) or (G2)(ii) does not require any restriction on the viscosity coefficient $\nu$; we will in fact prove that in these cases at least one invariant measure always exists and it is unique and asymptotically stable under a non-degeneracy condition on the noise.
Things are more delicate under (G2)(iii): the existence of  invariant measures, their uniqueness and the 
asymptotic stability require gradually narrower assumptions about the viscosity coefficient $\nu$. Dissipation is required to balance the intensity of the multiplicative part of the noise $\tilde K_3$ more and more consistently.
Therefore, for clarity of exposition, we separate the results of existence, uniqueness to asymptotic stability of the invariant measure by dividing them into the three Sections \ref{exis_inv_meas}, \ref{main_result_sec} and \ref{asy_sta_sec}. The existence result is well known in the literature (see \cite{Fla_Gat}) but we briefly recall it. The uniqueness result and the asymptotic stability result are based on the abstract results of \cite{GHMR17} and \cite{KS} respectively; we recall them in Section \ref{exis_inv_meas}.

\subsection{Existence of an invariant measure}
\label{exis_inv_meas}

For every $x\in H$, the unique solution 
to equation \eqref{NS_abs} as given in Proposition~\ref{ex_uniq_sol} will be denoted by $u(\cdot; x)$,
and for every $t\in[0,T]$ we set  $u(t;x)$ for its value at time $t$, and $u(t;x):\Omega\to H$ is a random variable in $L^2(\Omega, \mathcal{F}_t; H)$.

We denote by 
$\mathfrak{B}(H)$ the $\sigma$-algebra of all Borel subsets of $H$ and by $\mathcal{P}(H)$ the set of all probability measures on $(H, \mathfrak{B}(H))$. 
Also, the symbol $\mathcal{B}_b(H)$ denotes the space of Borel measurable bounded functions from $H$ to $\mathbb{R}$ and $\mathcal{C}_b(H)$ the space of continuous bounded functions from $H$ to $\mathbb{R}$. 

With this notation and by virtue of Theorem~\ref{ex_uniq_sol}, we can introduce the Markov kernel
\begin{equation}
P_t(x, A):=\mathbb{P}(u(t;x)\in A), \quad \forall t \ge 0, \ x \in H, \ A \in \mathfrak{B}(H).
\end{equation}
This kernel defines a family of operators $P:=(P_t)_{t\ge 0}$ that 
act on functions $\varphi \in \mathcal{B}_b(H)$ as 
\begin{equation}
(P_t\varphi)(x):=\int_H \varphi (y)P_t(x, {\rm d}y)= \mathbb{E}[ \varphi(u(t;x))], \quad x\in H, \ t \ge 0.
\end{equation}
For any Borel probability measures $\mu \in \mathcal{P}(H)$ we consider the evolution of measures
\begin{equation*}\label{P_mu}
P^*_t\mu(A):=\int_H P_t(y,A)\, \mu({\rm d}y), \quad  A \in \mathfrak{B}(H), \ t \ge 0.
\end{equation*} 
It is clear that $P_t\varphi$ is bounded 
for every $\varphi \in \mathcal{B}_b(H)$. We know from \cite[Corollary 23]{On2005}
that the transition function is jointly measurable, that is for any $A\in\mathfrak{B}(H)$ 
the map $H \times [0,\infty)\ni  (x,t)\mapsto P_t(x,A)\in \mathbb{R}$ is measurable. So $P_t\varphi$ is also measurable for every $\varphi \in \mathcal{B}_b(H)$, 
hence $P_t$ maps $\mathcal{B}_b(H)$ into itself for every $t \ge0$.
Furthermore, since the unique solution of \eqref{NS_abs} is an $H$-valued continuous process, 
then it is also a Markov process, see \cite[Theorem 27]{On2005}. 
Therefore we deduce that the family of operators $(P_t)_{t\ge0}$ is a Markov semigroup, 
namely $P_{t+s}=P_tP_s$ for any $s,t\ge0$.
\\
We are ready to give the precise definition of invariant measure. We recall that a semigroup $P$ is said to be Feller if $P_t: \mathcal{C}_b(H) \rightarrow \mathcal{C}_b(H)$, for all $t> 0$.
\begin{definition}
Given a Feller semigroup $P$ an invariant measure for $P$ 
  is a probability measure $\mu\in\mathcal{P}(H)$ such that $P^*_t\mu=\mu$ for all $t\ge 0$ or, equivalently, 
  \[
  \int_H \varphi(x)\,\mu({\rm d} x) = \int_H P_t\varphi(x)\,\mu({\rm d} x) \quad\forall\,t\geq0,\quad\forall\,\varphi\in \mathcal{C}_b(H).
  \]
\end{definition}
The following result shows that the transition semigroup $P$
of equation \eqref{NS_abs} admits at least one invariant measure. 
\begin{proposition}
\label{prop_mis_inv}
Assume (G1) and (G2) with the additional condition 
\begin{equation}\label{visco1}
\nu>\frac{\tilde{K}_3^2}{2\lambda_1}
\end{equation}
for the case (G2)(iii). 
Then, the transition semigroup $P$ admits at least one invariant measure.
\end{proposition}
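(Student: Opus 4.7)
The plan is to apply the classical Krylov-Bogoliubov procedure, which requires two ingredients: (a) the Feller property of the semigroup $(P_t)_{t\ge 0}$, and (b) tightness of the time-averaged measures
\[
R_T(u_0,\cdot):=\frac{1}{T}\int_0^T P_t(u_0,\cdot)\,\df t,\qquad T>0,
\]
for some fixed initial condition $u_0\in H$.

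For the Feller property, I would use Lemma \ref{techn_lemm} (which is proved under the broadest assumption (G2)(iii)). That lemma together with Theorem \ref{ex_uniq_sol} gives continuous dependence of the strong solution on the initial datum in a weighted $L^2(\Omega;H)$ sense, from which one extracts convergence in probability along sequences $x_n\to x$ in $H$. Combined with the moment bound \eqref{q_mom_sol}, this yields $P_t\varphi(x_n)\to P_t\varphi(x)$ for every $\varphi\in\mathcal{C}_b(H)$ by dominated convergence, i.e.\ the Feller property. This step is essentially routine given the machinery already developed.

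The key step is the tightness of $\{R_T(u_0,\cdot)\}_{T\ge 1}$. Here one applies It\^o's formula to $\|u(t)\|_H^2$ to obtain
\[
\df \|u\|_H^2+2\nu\|u\|_V^2\,\df t
=\bigl(2\langle f,u\rangle+\|G(u)\|_{L_{HS}(U,H)}^2\bigr)\df t+2\langle u,G(u)\df W\rangle.
\]
Under (G2)(iii), the Hilbert--Schmidt bound gives $\|G(u)\|_{L_{HS}(U,H)}^2\le (1+\varepsilon)\tilde K_3^2\|u\|_H^2+C_\varepsilon K_3^2$, and Poincar\'e's inequality \eqref{lambda_1} yields $\tilde K_3^2\|u\|_H^2\le \frac{\tilde K_3^2}{\lambda_1}\|u\|_V^2$. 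Choosing $\varepsilon$ small, the hypothesis $\nu>\frac{\tilde K_3^2}{2\lambda_1}$ guarantees that the coefficient of $\|u\|_V^2$ on the left-hand side remains strictly positive after absorbing the noise term, and one obtains, after also controlling $2\langle f,u\rangle$ by Young's inequality, a differential inequality of the form
\[
\frac{\df}{\df t}\mathbb{E}\|u\|_H^2+\kappa\,\mathbb{E}\|u\|_V^2\le C\bigl(\|f\|_{V^*}^2+K_3^2\bigr),
\]
for some $\kappa>0$. Invoking Poincar\'e once more on the left and Gronwall give $\sup_{t\ge 0}\mathbb{E}\|u(t;u_0)\|_H^2<\infty$, and integrating in time produces the averaged estimate
\[
\frac{1}{T}\int_0^T \mathbb{E}\|u(t;u_0)\|_V^2\,\df t\le C(1+\|u_0\|_H^2),
\]
uniformly in $T\ge 1$. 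The cases (G2)(i) and (G2)(ii) are easier since the noise term can be absorbed without any constraint on $\nu$ (in (G2)(ii) by Young's inequality to dispose of the sublinear term). By Chebyshev's inequality, $R_T(u_0,\{v\in H:\|v\|_V>R\})\le CR^{-2}$, and since the embedding $V\hookrightarrow H$ is compact the family $\{R_T(u_0,\cdot)\}_{T\ge 1}$ is tight on $H$.

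Finally, by Prokhorov's theorem there is a weakly convergent subsequence $R_{T_n}(u_0,\cdot)\rightharpoonup\mu$ in $\mathcal{P}(H)$, and the standard Krylov--Bogoliubov argument, using the Feller property to pass to the limit in $\int P_s\varphi\,\df R_{T_n}=\int\varphi\,\df R_{T_n}+\frac{1}{T_n}\bigl(\int_{T_n}^{T_n+s}P_t\varphi(u_0)\,\df t-\int_0^s P_t\varphi(u_0)\,\df t\bigr)$ for $\varphi\in\mathcal{C}_b(H)$, shows that $\mu$ is invariant. The main obstacle is really only the energy estimate in case (G2)(iii): the condition \eqref{visco1} is exactly what allows the dissipation to dominate the quadratic part of the noise and produce a finite uniform-in-time second moment.
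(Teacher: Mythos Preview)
Your proposal is correct and follows essentially the same route as the paper: Feller property via Lemma~\ref{techn_lemm} (convergence in probability plus boundedness of $\varphi$), and tightness of the time-averaged laws via the It\^o energy estimate, Poincar\'e, and the compact embedding $V\hookrightarrow H$. The only cosmetic differences are that the paper packages the energy estimate as Lemma~\ref{lem_A1} and takes the initial datum $u_0=0$, and it invokes Vitali rather than dominated convergence for the Feller step (either works since $\varphi$ is bounded; the moment bound \eqref{q_mom_sol} is not actually needed there).
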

\begin{proof}
The result is a consequence of the Krylov-Bougoliubov Theorem (see e.g. \cite[Theorem~11.7]{DapZab}) provided 
that we check that $P$ is Feller and the tightness property holds.\\
(i) Let us show at first that $P$ is Feller. Let $t>0$ and $\varphi \in \mathcal{C}_b(H)$ be fixed. We need to prove that, given a sequence $\{x_n\}_n \subset H$ which converges in $H$ to $x\in H$ as $n \rightarrow \infty$, the sequence $P_t\varphi(x_n)$ converges to $P_t \varphi(x)$ as $n \rightarrow \infty$. Lemma \ref{techn_lemm} yields 
\begin{equation*}
\mathbb{E}\left[ e^{-\left(L_G^2t- \lambda_1 \nu t+\frac{1}{\nu}\int_0^t\|u(s;x)\|^2_V\, {\rm d}s \right)}\|u(t;x)-u(t;x_n)\|^2_H\right] \le \|x-x_n\|_H^2.
\end{equation*}
It follows that $u(t;x_n)$ converges to $u(t;x)$ in probability. This implies, by the continuity of $\varphi$, that $\varphi(u(t;x_n))$ converges to $\varphi(u(t;x))$ in probability. The boundedness of $\varphi$ and the Vitali Theorem yield, in particular, $\varphi(u(t;x_n))\rightarrow\varphi(u(t;x))$ in $L^1(\Omega)$ and thus
\begin{equation*}
|P_t\varphi(x_n)-P_t\varphi(x)| \le \mathbb{E}\left[\left|\varphi(u(t;x_n))-\varphi(u(t;x)) \right| \right] \rightarrow 0, \quad \text{as} \ n \rightarrow + \infty,
\end{equation*}
which proves the Feller property in $H$.
\\
(ii) We prove now that $P$ satisfies the tightness property of the Krylov-Bougoliubov Theorem.
We use the estimate of Lemma \ref{lem_A1}. 
To this end, let $x=0$. We are going to show that the family of measures 
$(\mu_t)_{t>0}\subset \mathcal{P}(H)$ defined by 
\begin{equation*}
\mu_t : A \mapsto \frac 1t \int_0^t (P_s\pmb{1}_{A})(0)\, {\rm d}s
=\frac 1t \int_0^t P_t(0,A)\, {\rm d}s, \qquad A \in \mathcal{B}(H), \ t>0,
\end{equation*}
is tight in $H$.
Let $B_n$ be the closed ball in $V$ of radius $n \in \mathbb{N}$, $B_n$ is a compact subset of $H$, since the embedding $V \hookrightarrow H$ is compact. 
Hence, Lemma~\ref{lem_A1} and the Chebychev inequality yield, for any $t>0$,
\begin{align*}
\mu_t(B_n^c)
&=\frac 1t \int_0^t (P_s\pmb{1}_{B_n^c})(0)\, {\rm d}s=
\frac 1t \int_0^t \mathbb{P}\left(\|u(s;0)\|^2_V \ge n^2 \right)\, {\rm d}s
\\
&\le \frac{1}{tn^2}\int_0^t \mathbb{E}\left[ \|u(s;0)\|^2_V\right]\, {\rm d}s
\le \frac{1}{n^2}\frac ba,
\end{align*}
with $a$ and $b$ defined in \eqref{a} and \eqref{b} respectively,
from which
\[
\forall \varepsilon >0 \ \exists \  n_\varepsilon: \mu_t(B_{n_\varepsilon})>1- \varepsilon
\qquad \text{ for any } t\ge 0
\]
and the thesis follows. 
\end{proof}

\begin{remark}
The condition $\nu>\frac{\tilde K_3^2}{ 2 \lambda_1}$ that appears in Proposition \ref{prop_mis_inv}, when we work under Assumption (G2)(iii), roughly speaking says that the viscosity coefficient has to balance the intensity of the \emph{multiplicative} part of the noise. Notice that the same condition appears in \cite[Theorem 4.1]{Fla_Gat}; compare also with the similar condition (3.6) that appears in \cite[Theorem 3.3.]{noi} in the case of the nonlinear Schr\"odinger equation.
\end{remark}

\subsection{The abstract results in \cite{GHMR17} and \cite{KS}}
\label{KS_sec}

The proof of the uniqueness of the invariant measure and its asymptotic stability relies on the abstract results of \cite{GHMR17} and \cite{KS}, respectively.
In \cite{GHMR17} the authors provide sufficient conditions in terms of generalized couplings for the uniqueness of an invariant measure, whereas in \cite{KS}
sufficient conditions for the weak convergence to the invariant measure 
are provided under more restrictive assumptions. 
The statement of these results requires to fix some notation.

We work on the Polish space $H$ and on it we consider the metric induced by the norm $\|\cdot\|_H$. This metric induces on the space $\mathcal{P}(H)$ 
the weak convergence
: $\{\mu_k\}_k \subset \mathcal{P}(H)$ weakly converges to $\mu\in \mathcal{P}(H)$ if 
\begin{equation*}
\int_H f\, {\rm d}\mu_k \rightarrow \int_H f\,{\rm d}\mu \quad \text{as} \ k \rightarrow \infty, \quad \forall \ f \in C_b(H).
\end{equation*}
We denote by $\text{Lip}_b(H)$ the space of all bounded and Lipschitz real-valued functions on $H$, endowed with the norm 
\begin{equation*}
\|\varphi\|_L= \sup_{x\in H}|\varphi(x)|+\sup_{x, y \in H, x\neq y}\frac{|\varphi(x)-\varphi(y)|}{\|x-y\|_H}.
\end{equation*} 
We endow the space $\mathcal{P}(H)$ with the Wasserstein (also called dual-Lipschitz) distance 
\begin{equation}
\|\mu- \nu\|_* := \sup_{\varphi \in \text{Lip}_b(H), \|\varphi\|_L \le 1}\left| \int_H\varphi\,{\rm d}\mu-\int_H\varphi\,{\rm d}\nu\right|, \quad \mu, \nu \in \mathcal{P}(H).
\end{equation}
From \cite[Theorem 1.2.15]{KS_BOOK} we know that a sequence $\{\mu_k\}_k \subset \mathcal{P}(H)$ converges to a measure $\mu \in \mathcal{P}(H)$ w.r.t. the Wasserstein distance if and only if $\{\mu_k\}_k$ weakly converges to $\mu$.

All the main statements below will be formulated in the discrete-time setting; however, they have straightforward analogues in the continuous-time setting  thanks to the continuity of the trajectories of the solution (see, e.g.,  \cite[Remark 4]{KS}).

We introduce the space of one-sided infinite sequences $H^{\mathbb{N}}$ with its Borel $\sigma$-field $\mathfrak{B}(H^{\mathbb{N}})$. By $\mathcal{P}\left(H^{\mathbb{N}}\right)$ we denote the collections of Borel probability measures on $H^{\mathbb{N}}$.

For given $\mu, \nu \in \mathcal{P}(H^{\mathbb{N}})$ we define
\begin{equation*}
\mathcal{C}(\mu, \nu):= \{ \xi \in \mathcal{P}(H^{\mathbb{N}} \times H^{\mathbb{N}})\ : \ \pi_1(\xi)= \mu, \ \pi_2(\xi)= \nu\},
\end{equation*}
where $\pi_i(\xi)$ denotes the $i$-th marginal distribution of $\xi$, $i=1,2$. Any $\xi \in C(\mu, \nu)$ is called a \textit{coupling} for $\mu, \nu$. We introduce the following two extensions of the notion of coupling. Recall that $\mu \ll \nu$ means that $\mu$ is absolutely continuous w.r.t. $\nu$ and $\mu \sim \nu$ means that $\mu$ and $\nu$ are equivalent, i.e., mutually absolutely continuous. We define 
\begin{equation*}
\widetilde{\mathcal{C}}
(\mu, \nu):= \{ \xi \in \mathcal{P}(H^{\mathbb{N}} \times H^{\mathbb{N}})\ : \ \pi_1(\xi) \sim \mu, \ \pi_2(\xi) \sim \nu\},
\end{equation*}
\begin{equation*}
\widehat{\mathcal{C}}
(\mu, \nu):= \{ \xi \in \mathcal{P}(H^{\mathbb{N}} \times H^{\mathbb{N}})\ : \ \pi_1(\xi) \ll \mu, \ \pi_2(\xi) \ll \nu\}
\end{equation*}
and call any probability measure from the classes $\widetilde{C}(\mu, \nu)$, $\widehat{C}(\mu, \nu)$ a \textit{generalized coupling} for $\mu, \nu$.

We introduce the subspaces
\begin{equation*}
D:= \{(x,y) \in H^{\mathbb{N}} \times H^{\mathbb{N}} \ : \ \lim_{n \rightarrow \infty}\| x(n)-y(n)\|_H=0\}
\end{equation*}
and, for a given $\varepsilon>0$, $n \in \mathbb{N}$,
\begin{equation*}
D^n_{\varepsilon}:= \{(x,y) \in H^{\mathbb{N}} \times H^{\mathbb{N}} \ : \ \|x(n)-y(n)\|_H\le \varepsilon\}.
\end{equation*}
We also introduce the set of test functions 
\begin{equation}
\label{G}
\mathcal{G}= \left\{\varphi \in C_b(H) \ : \ \sup_{x \ne y}\frac{|\varphi(x)-\varphi(y)|}{ \|x-y\|_H}< \infty \right\}
\end{equation}
which is  determining  measure set in $H$, that is, if $\mu, \nu \in \mathcal{P}(H)$ are such that $\int_H \varphi(u) \mu({\rm d}u)=\int_H \varphi(u) \nu({\rm d}u)$ for all $\varphi \in \mathcal{G}$, then it follows that $\mu=\nu$.

Let $u$ be the unique solution to equation \eqref{NS_abs}, the law of the sequence $\{u(n)\}_{n \in \mathbb{N}}$ on $(H^{\mathbb{N}}, \mathfrak{B}(H^{\mathbb{N}}))$ with initial velocity $u_0$ will be denoted by $\mathbb{P}_{u_0}$.
We are now ready to state the abstract result from \cite{GHMR17} in the form that best fits our context. 

\begin{theorem}
\label{GHMR}
Suppose that $\mathcal{G}$ determines measures on $(H, \|\cdot\|_H)$, and that $D \subseteq H^{\mathbb{N}} \times H^{\mathbb{N}}$ is measurable. If, for each $u_0, v_0 \in H$, there exists a generalized coupling $\xi_{u_0,v_0}\in \widehat{\mathcal{C}}(\mathbb{P}_{u_0},\mathbb{P}_{v_0})$ such that $\xi_{u_0,v_0}(D)>0$, then there is at most one $P$-invariant probability measure $\mu \in \mathcal{P}(H)$.
\end{theorem}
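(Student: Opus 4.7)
The natural strategy is a contradiction argument combining Birkhoff's ergodic theorem with the generalized coupling hypothesis. I would first suppose that two distinct $P$-invariant probability measures $\mu_1 \neq \mu_2$ exist, and reduce to the case in which both are ergodic with respect to the discrete-time skeleton $P_1$; this follows from the standard ergodic decomposition of the convex set of $P_1$-invariant measures (every $P$-invariant measure is $P_1$-invariant, which is why the hypothesis is naturally phrased on the sequence space $H^{\mathbb N}$). Two distinct ergodic invariant measures are mutually singular, so I may fix disjoint measurable sets $\Sigma_i \subset H$ with $\mu_i(\Sigma_i)=1$, and pick any $\varphi \in \mathcal{G}$ with $\int\varphi\, \mathrm{d}\mu_1 \neq \int\varphi\,\mathrm{d}\mu_2$, whose existence is guaranteed because $\mathcal{G}$ is measure-determining.

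Next I would invoke Birkhoff's pointwise ergodic theorem for the Markov chain: for each ergodic $\mu_i$ and the chosen $\varphi$ there is a measurable event $E_i \subset H^{\mathbb N}$ such that, for $\mu_i$-almost every $u_0$, one has $\mathbb{P}_{u_0}(E_i)=1$ and on $E_i$ the Cesaro averages $\frac{1}{N}\sum_{n=1}^N \varphi(x(n))$ converge to $\int_H \varphi\,\mathrm{d}\mu_i$. I then fix specific $u_0 \in \Sigma_1$, $v_0 \in \Sigma_2$ realising this full-measure property and consider the generalized coupling $\xi = \xi_{u_0,v_0}\in\widehat{\mathcal{C}}(\mathbb{P}_{u_0},\mathbb{P}_{v_0})$. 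Since $\pi_1(\xi)\ll\mathbb{P}_{u_0}$, absolute continuity yields
\[
\xi\bigl(E_1\times H^{\mathbb N}\bigr)=\pi_1(\xi)(E_1)=1,
\]
and symmetrically $\xi(H^{\mathbb N}\times E_2)=1$.

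The generalized coupling hypothesis $\xi(D)>0$ then forces
\[
D\cap\bigl(E_1\times H^{\mathbb N}\bigr)\cap\bigl(H^{\mathbb N}\times E_2\bigr)
\]
to have positive $\xi$-measure, and in particular to be non-empty. On any pair $(x,y)$ in this intersection, the Lipschitz character of $\varphi\in\mathcal G$ combined with $\|x(n)-y(n)\|_H\to 0$ gives $|\varphi(x(n))-\varphi(y(n))|\to 0$, so the Cesaro averages of the two trajectories share a common limit; but by the choice of $E_1,E_2$ this common limit equals both $\int\varphi\,\mathrm{d}\mu_1$ and $\int\varphi\,\mathrm{d}\mu_2$. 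This contradicts our choice of $\varphi$, proving $\mu_1=\mu_2$.

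Regarding obstacles: the explicit estimates (Birkhoff, Cesaro, Lipschitz) are routine; the real bookkeeping difficulty lies in juggling ``almost sure'' statements with respect to three distinct measures $\mathbb{P}_{u_0}$, $\mathbb{P}_{v_0}$ and $\xi$, and ensuring that the absolute continuity built into $\widehat{\mathcal{C}}$ is precisely what is needed to transport the Birkhoff full-measure events from the marginals to $\xi$ itself, so that they can be intersected with $D$. A secondary delicate point is justifying the reduction to ergodic invariant measures in the continuous-time setting via the discrete skeleton and verifying that the exceptional $\mu_i$-null sets arising from Birkhoff do not prevent us from choosing valid base points $u_0\in\Sigma_1$ and $v_0\in\Sigma_2$; this is where the mutual singularity of $\mu_1$ and $\mu_2$ is essential.
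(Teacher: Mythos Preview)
The paper does not give its own proof of this theorem: it is stated in Section~5.2 as the abstract result from \cite{GHMR17} (``We are now ready to state the abstract result from \cite{GHMR17} in the form that best fits our context''), and is invoked as a black box in the proof of Theorem~\ref{unique_thm}. Your proposal is correct and is essentially the argument given in \cite{GHMR17}: reduce to two distinct ergodic invariant measures (hence mutually singular), pick a test function $\varphi\in\mathcal G$ separating them, use Birkhoff's theorem to produce full-$\mathbb P_{u_0}$ and full-$\mathbb P_{v_0}$ events on which the time averages converge to the respective spatial averages, transport these events to $\xi$ via the absolute continuity of the marginals, and intersect with $D$ to force the two limits to coincide by the Lipschitz property of $\varphi$.
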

Roughly speaking, the above result states that in order for the Markov semigroup to have at most one invariant measure one needs to ensure that the process couple \textit{asymptotically} on a set of positive probability.


Under more restrictive assumptions, the abstract result in \cite{KS} (see in particular Corollary 4) ensures the asymptotic stability of the invariant measure.
\begin{theorem}
\label{KS}
Suppose that  the transition semigroup $P$ associated to \eqref{NS_abs} is a Feller semigroup on $H$ and for any $u_0, v_0 \in H$ there exists some $\xi_{u_0, v_0} \in \widehat{C}(\mathbb{P}_{u_0}, \mathbb{P}_{v_0})$ such that $\pi_1(\xi_{u_0, v_0})\sim \mathbb{P}_{u_0}$ and for any $\varepsilon>0$
\begin{equation}
\label{hyp_KS}
\lim_{n \rightarrow \infty} \xi_{u_0,v_0}\left(D^n_\varepsilon\right)=1.
\end{equation}
Then there exists at most one invariant probability measure and, if such a measure $\mu$ exists, then 
\begin{equation*}
\|P_t^*\delta_{u_0}-\mu\|_* \rightarrow 0 \quad \text{as} \ n \rightarrow \infty, \quad \forall \ u_0 \in H.
\end{equation*}
\end{theorem}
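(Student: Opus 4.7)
The overall strategy is to reduce both claims to the single--pair estimate
\[
\lim_{n\to\infty}\|P^*_n\delta_{u_0}-P^*_n\delta_{v_0}\|_*=0,\qquad \forall\,u_0,v_0\in H,
\]
from which uniqueness and asymptotic stability follow by Fubini--type arguments. Indeed, if $\mu_1,\mu_2$ are two $P$--invariant measures, then by $P_n$--invariance, for every $\varphi\in\text{Lip}_b(H)$ with $\|\varphi\|_L\le 1$,
\[
\Bigl|\int_H\varphi\,d\mu_1-\int_H\varphi\,d\mu_2\Bigr|\le\int_H\int_H\|P^*_n\delta_{u_0}-P^*_n\delta_{v_0}\|_*\,\mu_1(du_0)\mu_2(dv_0),
\]
and the integrand is dominated by $2$, so dominated convergence combined with the single--pair estimate yields $\mu_1=\mu_2$. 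Analogously, for the invariant measure $\mu$, writing $\mu=\int P^*_n\delta_{v_0}\,\mu(dv_0)$ and repeating the same argument gives $\|P^*_n\delta_{u_0}-\mu\|_*\to 0$ for every $u_0\in H$.

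The core of the proof is therefore the single--pair estimate, for which one uses the generalized coupling $\xi=\xi_{u_0,v_0}$ provided by the hypothesis. Denote by $(X,Y)$ the coordinate processes on $H^{\mathbb{N}}\times H^{\mathbb{N}}$ under $\xi$, and set $\rho_1:=d\pi_1(\xi)/d\mathbb{P}_{u_0}$, $\rho_2:=d\pi_2(\xi)/d\mathbb{P}_{v_0}$; the assumption $\pi_1(\xi)\sim\mathbb{P}_{u_0}$ forces $\rho_1>0$ $\mathbb{P}_{u_0}$--a.s., whereas $\rho_2$ is merely non--negative. For $\varphi\in\text{Lip}_b(H)$ with $\|\varphi\|_L\le 1$, the plan is to truncate the first coordinate to the set $A_K:=\{1/K\le\rho_1\le K\}$, which satisfies $\mathbb{P}_{u_0}(A_K)\uparrow 1$ as $K\to\infty$ since $\rho_1>0$ a.s.\ and $\rho_1\in L^1(\mathbb{P}_{u_0})$. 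On $A_K$ the densities are comparable, so the restriction of $\xi$ to $A_K$ induces a sub--coupling of measures proportional to $\mathbb{P}_{u_0}|_{A_K}$ and a sub--probability $\le\mathbb{P}_{v_0}$; the Lipschitz bound $|\varphi(X(n))-\varphi(Y(n))|\le\min(2,\|X(n)-Y(n)\|_H)$ together with \eqref{hyp_KS} controls the bulk contribution by a quantity of the form $\varepsilon+CK\,\xi((D^n_\varepsilon)^c)$, which vanishes as $n\to\infty$ for fixed $\varepsilon,K$. The tail contribution (outside $A_K$) is uniformly bounded by $2\mathbb{P}_{u_0}(A_K^c)$ plus a symmetric term coming from the $\rho_2$--decomposition, and vanishes as $K\to\infty$. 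Sending $n\to\infty$, then $\varepsilon\to 0$, then $K\to\infty$ would produce the estimate uniformly in $\varphi$.

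The main obstacle is the asymmetric measure--mismatch inherent to the generalized coupling: $\pi_2(\xi)$ need not charge the full support of $\mathbb{P}_{v_0}$, and $\rho_2$ may vanish on a set of positive $\mathbb{P}_{v_0}$--measure, so one cannot simply invoke a single change--of--measure identity $\mathbb{E}_{\mathbb{P}_{\cdot}}[f]=\mathbb{E}_\xi[f/\rho]$ on both sides. The stronger assumption $\pi_1(\xi)\sim\mathbb{P}_{u_0}$ is exactly what makes the $X$--side inversion well--posed and enables the bulk/tail decomposition above; the asymmetry on the $Y$--side must be absorbed by a more delicate bookkeeping and by exploiting uniform integrability of the densities. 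Combined with the Feller property (which, via \cite[Theorem 1.2.15]{KS_BOOK}, ensures that the Wasserstein distance $\|\cdot\|_*$ metrizes weak convergence on the Polish space $H$), this is the technical heart of Corollary~4 in \cite{KS}, the precise form of the abstract statement we apply here.
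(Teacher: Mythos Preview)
The paper does not prove this theorem at all: it is stated as an abstract black-box result, with the proof delegated to \cite{KS} (``see in particular Corollary~4''). So there is no ``paper's own proof'' to compare against---the paper simply quotes the statement and applies it later in the proof of Theorem~\ref{asy_sta}.

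Your proposal goes further than the paper in that you sketch the reduction to the single-pair estimate $\|P^*_n\delta_{u_0}-P^*_n\delta_{v_0}\|_*\to 0$ and the subsequent Fubini argument, both of which are correct and standard. However, the core of your sketch---controlling $P_n\varphi(v_0)$ through the generalized coupling---is precisely where you yourself flag the gap: with only $\pi_2(\xi)\ll\mathbb{P}_{v_0}$ (and not equivalence), the sub-probability $\pi_2(\xi)$ may miss a set of positive $\mathbb{P}_{v_0}$-mass, and no amount of truncation on the $X$-side fixes this. Your phrase ``must be absorbed by a more delicate bookkeeping'' is not a proof step; it is exactly the nontrivial content of \cite{KS}, which uses the Markov and Feller structure in an essential way (roughly: one transports the mismatch forward in time and uses Feller continuity plus tightness-type arguments, rather than a static density comparison). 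Since you conclude by invoking Corollary~4 of \cite{KS} anyway, your proposal and the paper ultimately land in the same place---a citation---with your version adding useful intuition about the strategy and correctly isolating where the real work lies.
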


\subsection{Uniqueness of the invariant measure}
\label{main_result_sec}

We prove here the uniqueness of the invariant measure relying on Theorem \ref{GHMR}. The key role in the proof is played by the Foias-Prodi type estimates that quantify the minimum number $\bar N$ of modes that need to be activated by the noise in order to get synchronization at infinity. We will thus need to impose the following requirement on the range of the noise: Assumption (G3) has to hold for some $M \ge \bar N$, where $\bar N$ is as in Theorem \ref{FP_cor}. 
This condition resembles the one  usually assumed  in presence of an \textit{additive} noise (see the many examples in \cite{GHMR17}).

\begin{theorem}
\label{unique_thm}
Assume (G1) and (G2),   with the additional condition \eqref{condizione32} for the case (G2)(iii). 
Then there exists  a positive integer
\begin{equation*}
\bar N=\begin{cases}
\bar N(L_G, K_1, \nu, \|f\|_{V^*}) & \text{under (G2)(i)},
\\
\bar N(L_G, K_2, \tilde{K}_2, \nu, \lambda_1, \gamma, \|f\|_{V^*})  &	\ \text{under (G2)(ii)},
\\
\bar N(L_G, K_3, \tilde{K}_3, \nu, \lambda_1, \|f\|_{V^*}) &\text{under (G2)(iii)},
\end{cases}
\end{equation*}
 such that if (G3) holds for some  $M \ge \bar N$
 , then $P$ possesses at most one ergodic invariant measure $\mu\in \mathcal{P}(H)$. 
\end{theorem}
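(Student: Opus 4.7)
The strategy is to verify the hypotheses of Theorem \ref{GHMR}. For each pair $u_0,v_0\in H$, I will construct a generalized coupling $\xi_{u_0,v_0}\in\widehat{\mathcal C}(\mathbb P_{u_0},\mathbb P_{v_0})$ with $\xi_{u_0,v_0}(D)>0$. Take $N=M$, which by the hypothesis $M\ge\bar N$ meets the Foias--Prodi threshold of Theorem \ref{FP_cor}. On the fixed stochastic basis, let $u=u(\cdot;u_0)$ solve \eqref{NS_abs} and, driven by the \emph{same} Wiener process $W$ and by $u$, let $v$ solve the nudged equation \eqref{NS_abs_nud} starting from $v_0$ with $\lambda=\nu\lambda_N/2$. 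Well-posedness of both processes is given by Theorem \ref{ex_uniq_sol} and the discussion after \eqref{NS_abs_nud}.

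The next step is a Girsanov change of measure. Using the non-degeneracy identity $G(v)g(v)=P_M=P_N$ from (G3), the nudging term factors as $\lambda P_N(u-v)=\lambda G(v)g(v)(u-v)$, so that \eqref{NS_abs_nud} becomes
\[
\mathrm{d}v+[\nu Av+B(v,v)]\,\mathrm{d}t=f\,\mathrm{d}t+G(v)\,\mathrm{d}\widetilde W,\qquad \widetilde W(t):=W(t)+\lambda\!\int_0^t g(v(s))\bigl(u(s)-v(s)\bigr)\mathrm{d}s.
\]
For each $T>0$, let $\mathcal E_T$ denote the Dol\'eans--Dade exponential of $-\lambda\int_0^\cdot g(v)(u-v)\,\mathrm{d}W$. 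Combining the uniform bound \eqref{bound_g} on $g$ with the moment estimates of Appendix~\ref{app_A} on $u,v$ and a localisation/stopping-time argument, one verifies that $\mathcal E_T$ is a genuine martingale. Setting $\mathrm{d}\mathbb Q_T=\mathcal E_T\,\mathrm{d}\mathbb P$, the process $\widetilde W$ becomes a cylindrical Wiener process on $[0,T]$ under $\mathbb Q_T$ and $v|_{[0,T]}$ solves \eqref{NS_abs} from $v_0$; hence the restriction of the $\mathbb P$-law of $v$ to cylinder sets based on $[0,T]$ is absolutely continuous with respect to $\mathbb P_{v_0}$. Consistency over $T$ yields that the pushforward $\xi_{u_0,v_0}$ of $\mathbb P$ under $(u(n),v(n))_{n\in\mathbb N}$ satisfies $\pi_1(\xi_{u_0,v_0})=\mathbb P_{u_0}$ and $\pi_2(\xi_{u_0,v_0})\ll\mathbb P_{v_0}$, i.e.\ $\xi_{u_0,v_0}\in\widehat{\mathcal C}(\mathbb P_{u_0},\mathbb P_{v_0})$.

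It then remains to prove $\xi_{u_0,v_0}(D)>0$. By Theorem \ref{FP_cor}, $\mathbb E\|u(t)-v(t)\|_H^2\to 0$ as $t\to+\infty$, with exponential rate under (G2)(i) and polynomial rate $t^{-p}$ under (G2)(ii) or (G2)(iii); the range of admissible $p$ is all of $(0,\infty)$ in case (ii) and equals $\bigl(0,\tfrac{\nu\lambda_1}{4\tilde K_3^2}-\tfrac38\bigr)$ in case (iii), which is non-empty precisely because of \eqref{condizione32}. Choosing a sampling sequence $t_k\uparrow\infty$ sparse enough to make $\sum_k\mathbb E\|u(t_k)-v(t_k)\|_H^2$ finite, the Borel--Cantelli lemma gives $\|u(t_k)-v(t_k)\|_H\to 0$ $\mathbb P$-a.s. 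Applying the discrete-time criterion of Theorem \ref{GHMR} to the Markov chain obtained from $P$ along the sub-sampling (whose invariant measures coincide with those of the continuous semigroup $P$), or equivalently invoking the continuous-time version of the criterion alluded to after Theorem \ref{GHMR}, yields $\xi_{u_0,v_0}(D)>0$, and therefore the desired uniqueness.

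The principal obstacle is the Girsanov step: the integrand $\lambda g(v)(u-v)$ is not globally bounded, so Novikov's condition cannot be checked outright on $[0,\infty)$. The uniform bound \eqref{bound_g} is what makes the argument viable, as it reduces matters to controlling an exponential moment of $\int_0^T\|u-v\|_H^2\,\mathrm{d}t$, which a standard stopping-time exhaustion based on the a priori bounds of Appendix~\ref{app_A} just permits; in case (G2)(iii) this is precisely where the viscosity threshold \eqref{condizione32} is needed to secure enough integrability. A secondary subtlety is the passage from the Foias--Prodi \emph{expectation} bound to almost sure convergence: in the borderline (G2)(iii) regime the admissible $p$ can be arbitrarily small, which forces the sparse sub-sampling $t_k$ rather than sampling at integer times.
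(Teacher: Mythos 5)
Your blueprint (Girsanov change of measure for the nudged process, Foias--Prodi in expectation, then Theorem \ref{GHMR}) is the right one, but you have dropped the one device the paper's proof cannot do without, namely the localization $\pmb{1}_{s\le \sigma_K}$ with $\sigma_K$ as in \eqref{def_sigma}, and this creates two genuine gaps. First, the measure-theoretic step: $\widehat{\mathcal C}(\mathbb P_{u_0},\mathbb P_{v_0})$ lives on $H^{\mathbb N}\times H^{\mathbb N}$, i.e.\ on the \emph{infinite} time horizon, and absolute continuity of the law of $v$ restricted to $[0,T]$ for every finite $T$ does \emph{not} imply $\pi_2(\xi_{u_0,v_0})\ll\mathbb P_{v_0}$ on the infinite product (Brownian motion with constant drift is the standard counterexample). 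To get global absolute continuity without a cutoff you need $\int_0^\infty\|h(s)\|_U^2\,{\rm d}s<\infty$ almost surely, which via \eqref{bound_g} reduces to $\int_0^\infty\|P_N(u-v)\|_H^2\,{\rm d}s<\infty$ a.s.; under (G2)(iii) with only \eqref{condizione32} the Foias--Prodi rate in Theorem \ref{FP_cor}(iii) is $t^{-p}$ with $p$ ranging over $(0,\tfrac{\nu\lambda_1}{4\tilde K_3^2}-\tfrac38)$, an interval that may contain no $p>1$, so this integrability is exactly what is \emph{not} available. (This is why the paper's Theorem \ref{asy_sta}, which does drop the cutoff, must strengthen the viscosity condition to \eqref{condizione72}.) Your appeal to Novikov via ``an exponential moment of $\int_0^T\|u-v\|_H^2$'' fares no better: Appendix \ref{app_A} only provides polynomial moments, and under (G2)(iii) only up to a finite order. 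In the paper, Lemma \ref{abs_cont_lem} sidesteps all of this because the stopped drift satisfies $\int_0^\infty\|h(s)\|_U^2\pmb{1}_{s\le\sigma_K}\,{\rm d}s\le \lambda^2\sup_x\|g(x)\|^2_{L(H,U)}K$ deterministically, making Novikov trivial on $[0,\infty)$.

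Second, the step $\xi_{u_0,v_0}(D)>0$: the set $D$ is defined through the integer-time skeleton, and Borel--Cantelli at integer times requires $\sum_n\mathbb E\|u(n)-v(n)\|_H^2<\infty$, i.e.\ again $p>1$, which fails in the borderline (G2)(iii) regime. Your fix --- sparse sampling times $t_k$ --- destroys the time-homogeneous Markov structure of the sampled chain, so Theorem \ref{GHMR} no longer applies to $(u(t_k))_k$, and there is no ``continuous-time version'' that upgrades an expectation bound $\mathbb E\|u(t)-v(t)\|_H^2\lesssim t^{-p}$ with $p<1$ to a.s.\ convergence. The paper's Proposition \ref{prop_coupling} resolves this differently: it applies Borel--Cantelli only on the event $\{\tau_{R,\beta}=\infty\}$, where Corollary \ref{cor_tau_R_beta} gives \emph{exponential} decay regardless of which case of (G2) holds, and controls $\mathbb P(\tau_{R,\beta}<\infty)$ by choosing $R$ large; this produces an event of probability $>\tfrac12$ on which both $\|u(n)-v(n)\|_H\to0$ along integers \emph{and} $\int_0^\infty\|P_N(u-v)\|_H^2\,{\rm d}s\le K$, the latter forcing $\sigma_K=\infty$ so that $v=\tilde v$ there. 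The choice of $K$ after the fact is what ties the two gaps together and is the actual content of the paper's proof. Your argument, as written, proves the theorem only in cases (G2)(i)--(ii) (where $p>1$ is available and the global Girsanov step can be repaired via the a.s.\ finiteness of $\int_0^\infty\|h\|_U^2$), but not in case (G2)(iii) under the stated hypothesis \eqref{condizione32}.
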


\begin{remark}
\label{rem_ex}
Notice that Theorem \ref{unique_thm} yields also the existence of an invariant measure,
since the condition  \eqref{condizione32} on the viscosity coefficient is stronger 
than  the condition \eqref{visco1} assumed  to get  the existence result in Proposition \ref{prop_mis_inv}.
\end{remark}

We need some auxiliary result in order to prove Theorem \ref{unique_thm}.

In order to 
exploit Theorem \ref{GHMR}, the idea is to introduce a modification of the Navier-Stokes equation \eqref{NS_abs} such that: (i) the law of the solution to the new SPDE is absolutely continuous with respect to the law of the solution to the original one \eqref{NS_abs}; (ii)  for any pair of distinct initial conditions, there is a positive probability that solutions to these systems converge at time infinity, when evaluated  on a infinite sequence of evenly spaced times. 

We start by introducing a modification of equation \eqref{NS_abs} such  that (i) holds and proving (see Lemma \ref{abs_cont_lem} below) that the law of the solution to this modified system \eqref{NS_abs_nud_stop} is absolutely continuous (actually equivalent) with respect to the law of the solution of the original system \eqref{NS_abs}, provided Assumption (G3) holds with $M \ge N$. Then, in Proposition \ref{prop_coupling} we prove that there exists $\bar N>0$ sufficiently large such that, if $N \ge \bar N$, 
then there exists an infinite sequence of evenly spaced times such that for any pair of different initial conditions, the probability that the solutions to systems \eqref{NS_abs} and \eqref{NS_abs_nud_stop} converge at time infinity is strictly positive. Uniqueness of the invariant measure will then steam as a consequence of Lemma \ref{abs_cont_lem}, Proposition \ref{prop_coupling} and Theorem \ref{GHMR} imposing Assumption (G3) to hold with $M \ge \bar N$.

Let Assumption (G3) hold  for some $M \ge N$. We fix two initial conditions $u_0, v_0 \in H$ and we consider the Navier-Stokes equation \eqref{NS_abs} starting from $u_0$ and its nudged equation \eqref{NS_abs_nud} starting from $v_0$. 
We define the shift $h$ by 
\begin{equation}
\label{sigma}
h(t):= \lambda\ g(v(t))\ P_N(u(t)-v(t)), \quad t \ge 0,
\end{equation}
where $ \lambda P_N(u(t)-v(t))$ is the nudged term in equation  \eqref{NS_abs_nud}.
Notice that the definition of $h$ does make sense, thanks to assumption (G3) 
and the fact we required $M \ge N$. The shift $h$ belongs to the space $U$, where the noise lives.
Given $K>0$ we introduce the stopping time 
\begin{equation}\label{def_sigma}
\sigma_K:= \inf\left\{t \ge 0:  \int_0^t \|P_N(u(s)-v(s))\|^2_H {\rm d}s\ge K  \right\}.
\end{equation}
The constant $K$ will be chosen in a suitable way later on (see the proof of Proposition \ref{prop_coupling}). 
We set 
\begin{equation*}
\widetilde{W}(t):=W(t)+ \int_0^t h(s) \pmb{1}_{s \le \sigma_K}\, {\rm d}s.
\end{equation*}
The modified equation upon which we will build a generalized coupling is given by 
\begin{equation}
\label{NS_abs_nud_stop}
\begin{cases}
{\rm d}\tilde v(t) + \left[\nu A\tilde v(t)+B(\tilde v(t),\tilde v(t))\right]\,{\rm d}t
=G(\tilde v(t))\,{\rm d}\widetilde W(t)+f\,{\rm d}t
\\
\tilde v(0)=v_0
\end{cases}
\end{equation}
We will refer to \eqref{NS_abs_nud_stop} as the \textit{nudged stopped equation} corresponding to the Navier-Stokes equation \eqref{NS_abs}.

We denote by $\Psi_{u_0}$ and $\widetilde{\Psi}_{u_0,v_0}$ the measurable maps induced by solutions to \eqref{NS_abs} and \eqref{NS_abs_nud_stop}, respectively, that map an underlying probability space $(\Omega, \mathcal{F}, \mathbb{P})$ to $C([0, \infty);H)$. The law of solutions of \eqref{NS_abs} and \eqref{NS_abs_nud_stop} are given by $\mathbb{P} \Psi^{-1}_{u_0}$ and $\mathbb{P} (\widetilde{\Psi}_{u_0, v_0})^{-1}$ respectively.

\begin{lemma}
\label{abs_cont_lem}
Let assumptions  (G1) and (G2) 
be in force and let $N$ be the integer appearing in \eqref{sigma}. If Assumption (G3) holds with $M \ge N$, then for any $K,\lambda>0$, the laws of  solutions to \eqref{NS_abs} and \eqref{NS_abs_nud_stop} are equivalent (i.e. mutually absolutely continuous), that is $\mathbb{P}\Psi^{-1}_{v_0}\sim \mathbb{P}(\widetilde{\Psi}_{u_0,v_0})^{-1}$ as measures on $C([0, \infty);H)$.
\end{lemma}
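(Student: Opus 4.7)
The plan is to view \eqref{NS_abs_nud_stop} as a Girsanov shift of the Navier--Stokes equation \eqref{NS_abs} with initial datum $v_0$, driven by the modified Wiener process $\widetilde W$; equivalence of the path laws will follow by transferring the equivalence of two probability measures on $\Omega$, one of which makes $\widetilde W$ a genuine cylindrical Wiener process, back to the pushforwards on $C([0,\infty);H)$.

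The first step is to check that the shift $h$ defined in \eqref{sigma} is a well-defined $U$-valued adapted process and that, stopped at $\sigma_K$, its $U$-norm is under control. The hypothesis $M\ge N$ guarantees that $P_N(u-v)\in P_MH \subseteq \text{Im}\,G(v)$, so by (G3) the expression $g(v)P_N(u-v)$ makes sense and lies in $U$; moreover \eqref{bound_g} combined with the very definition \eqref{def_sigma} of $\sigma_K$ yields a \emph{deterministic} bound of the form
\[
\int_0^{\infty}\bigl\|h(s)\,\pmb{1}_{s\le\sigma_K}\bigr\|_U^2\,{\rm d}s \;\le\; \lambda^2\Bigl(\sup_{u\in H}\|g(u)\|_{L(H,U)}\Bigr)^2 K,
\]
uniformly in $\omega\in\Omega$. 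This is the key quantitative input.

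With this in hand, the second step is Girsanov. Because the integrand above is bounded deterministically, Novikov's condition holds trivially, so the exponential
\[
\mathcal{E}_t := \exp\!\left(-\int_0^t \langle h(s)\pmb{1}_{s\le\sigma_K},\,{\rm d}W(s)\rangle_U - \tfrac12\int_0^t \|h(s)\pmb{1}_{s\le\sigma_K}\|_U^2\,{\rm d}s\right),\qquad t\ge 0,
\]
is a uniformly integrable $\mathbb P$-martingale with strictly positive $L^1$-limit $\mathcal{E}_\infty$ of mean one. Setting ${\rm d}\widetilde{\mathbb P}/{\rm d}\mathbb P:=\mathcal{E}_\infty$ defines a probability measure $\widetilde{\mathbb P}\sim\mathbb P$ on $\mathcal F$, under which the Girsanov theorem for cylindrical Wiener processes ensures that $\widetilde W$ is itself a $U$-cylindrical Wiener process (adapted to the same filtration $\mathbb F$).

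The final step is weak uniqueness. Under $\widetilde{\mathbb P}$, the system \eqref{NS_abs_nud_stop} has exactly the same form as the Navier--Stokes equation \eqref{NS_abs} with initial datum $v_0$, only with $W$ replaced by the $\widetilde{\mathbb P}$-Wiener process $\widetilde W$; by the strong existence and pathwise uniqueness of Theorem \ref{ex_uniq_sol} (which, together with Yamada--Watanabe, provides uniqueness in law), the pushforward of $\widetilde{\mathbb P}$ under $\widetilde\Psi_{u_0,v_0}$ therefore coincides with $\mathbb P\Psi_{v_0}^{-1}$. Since $\widetilde{\mathbb P}\sim\mathbb P$, the pushforwards $\mathbb P(\widetilde\Psi_{u_0,v_0})^{-1}$ and $\widetilde{\mathbb P}(\widetilde\Psi_{u_0,v_0})^{-1}$ are mutually absolutely continuous on $C([0,\infty);H)$, which combined with the previous identification gives the claim. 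The only step I expect to require a little care is the infinite-horizon version of Girsanov in the infinite-dimensional cylindrical setting, but the uniform-in-$\omega$ bound on the shift truly bypasses the usual subtleties (in particular one does not need to approximate with localizing stopping times).
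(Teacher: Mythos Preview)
Your argument is correct and follows the same route as the paper: the deterministic bound on $\int_0^\infty\|h(s)\pmb{1}_{s\le\sigma_K}\|_U^2\,{\rm d}s$ via \eqref{bound_g} and the definition of $\sigma_K$, Novikov, Girsanov, and then the identification of laws. You are in fact slightly more explicit than the paper in the last step, spelling out that the law identification $\widetilde{\mathbb P}(\widetilde\Psi_{u_0,v_0})^{-1}=\mathbb P\Psi_{v_0}^{-1}$ relies on uniqueness in law (via Yamada--Watanabe from Theorem \ref{ex_uniq_sol}), which the paper leaves implicit.
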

\begin{proof}

Bearing in mind \eqref{sigma}, we have  
\[\begin{split}
\int_0^\infty \|h(s)\|^2_{U}\pmb{1}_{s\le \sigma_K}{\rm d}s
&\le 
 \lambda^2 \left(\sup_{x \in H}\|g(x)\|_{L(H,U)}^2\right)\int_0^\infty \|P_N(u(s)-v(s))\|^2_{H}\pmb{1}_{s\le \sigma_K}{\rm d}s
\\&\le
 \lambda^2 \left(\sup_{x \in H} \|g(x)\|_{L(H,U)}^2\right) K
\end{split}\]
which is finite thanks to \eqref{bound_g} in Assumption (G3). 
Therefore, the drift $h(s)\pmb{1}_{s\le \sigma_K}$  satisfies the Novikov condition 
\begin{equation*}
\mathbb{E} \left[ \exp\left(\frac 12 \int_0^\infty  \|h(s)\|^2_U\pmb{1}_{s\le \sigma_K}  \,{\rm d}s\right)\right]<\infty
\end{equation*}
and from the Girsanov Theorem we infer that there exists a probability measure $\mathbb{Q}$ on $
C([0, \infty);U)$
 such that under $\mathbb{Q}$,  $\widetilde{W}$ is a $U$-valued Wiener process on the time interval $[0, \infty)$.  It follows that the law of the solution to  the nudget stopped equation 
 \eqref{NS_abs_nud_stop} is equivalent on $C([0, \infty);H)$ to the law of the solution to the equation \eqref{NS_abs} with initial condition $v_0$, i.e. $\mathbb{P}\Psi^{-1}_{v_0}\sim \mathbb{P}(\widetilde{\Psi}_{u_0,v_0})^{-1}$ as measures on $C([0, \infty);H)$.
\end{proof}

\begin{remark}
\label{rem_eq}
On the set $\{\sigma_K=\infty\}$ we have that $v=\tilde v$, $\mathbb P$-a.s.,  
where $v$ is the solution of the nudged equation \eqref{NS_abs_nud}.
This steams from the uniqueness of the solution of equation \eqref{NS_abs_nud}.
\end{remark}

The crucial ingredient to prove the following result is given by the Foias-Prodi estimates of Proposition \ref{FP_cor}.
\begin{proposition}
\label{prop_coupling}
Assume (G1) and 
(G2),   with the additional condition {\color{purple}\eqref{condizione32}}
for the case (G2)(iii).
Let  $u$ be the solution of  the Navier-Stokes equation \eqref{NS_abs} with initial velocity  $u_0$ and 
$\tilde v$  the solution of the stopped nudget equation \eqref{NS_abs_nud_stop} 
with initial velocity $v_0$.

Then there exist a positive integer 
\begin{equation*}
\bar N=\begin{cases}
\bar N(L_G, K_1, \nu, \|f\|_{V^*}) & \text{under (G2)(i)},
\\
\bar N(L_G, K_2,\tilde{K}_2, \nu, \lambda_1, \gamma, \|f\|_{V^*})  &	\ \text{under (G2)(ii)},
\\
\bar N(L_G, K_3, \tilde{K}_3, \nu, \lambda_1, \|f\|_{V^*}) &\text{under (G2)(iii)},
\end{cases}
\end{equation*}
 and  a positive $\lambda=\lambda(\bar N, \nu)$ 
such that when $N \ge \bar N$ and $u_0, v_0 \in H$, 
one has
\begin{equation*}
\mathbb{P}\left(\lim_{n \rightarrow \infty}\|\tilde v(n)-u(n)\|_H=0 \right)>0.
\end{equation*}
\end{proposition}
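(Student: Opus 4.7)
The plan is to combine the exponential-in-time decay on the large-deviation event $\{\tau_{R,\beta}=\infty\}$ from Corollary~\ref{cor_tau_R_beta} with the fact that, by Remark~\ref{rem_eq}, the stopped nudged solution $\tilde v$ coincides with the unstopped nudged solution $v$ of \eqref{NS_abs_nud} on the event $\{\sigma_K=\infty\}$. The idea is to show that, for suitably chosen parameters, the intersection of these two events has positive probability, and on this intersection the asymptotic synchronization $\|\tilde v(n)-u(n)\|_H\to 0$ holds pathwise.

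I would first fix $\lambda=\nu\lambda_N/2$ as in Proposition~\ref{FP_thm}, and let $v$ denote the solution of \eqref{NS_abs_nud} with initial velocity $v_0$. Choose $\beta$ as prescribed by \eqref{cond_beta_i}, \eqref{cond_beta_ii} or \eqref{cond_beta}, depending on which of (G2)(i)--(iii) is in force; this $\beta$ depends only on $\|u_0\|_H$. Take $\bar N$ large enough that Propositions~\ref{tau_exp}, \ref{tau_pol_ii}, \ref{tau_pol_iii} apply, so that for every $N\ge\bar N$ we have $\mathbb{P}(\tau_{R,\beta}<\infty)\to 0$ as $R\to\infty$. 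Then pick $R_0$ so large that $A:=\{\tau_{R_0,\beta}=\infty\}$ satisfies $\mathbb{P}(A)>0$.

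By Corollary~\ref{cor_tau_R_beta}, on $A$ one has the moment bound
\begin{equation*}
\mathbb{E}\bigl[\mathbf{1}_A\,\|u(t)-v(t)\|_H^2\bigr]\le e^{R_0+\beta-(\nu\lambda_N/4)t}\,\|u_0-v_0\|_H^2\qquad\forall\,t\ge 0.
\end{equation*}
Since $\|P_N(u-v)\|_H\le\|u-v\|_H$, Fubini--Tonelli yields
\begin{equation*}
\mathbb{E}\Bigl[\mathbf{1}_A\int_0^{\infty}\!\|P_N(u(s)-v(s))\|_H^2\,\mathrm{d}s\Bigr]\le \frac{4\,e^{R_0+\beta}}{\nu\lambda_N}\|u_0-v_0\|_H^2<\infty,
\end{equation*}
and summing the same bound along integer times gives
\begin{equation*}
\mathbb{E}\Bigl[\mathbf{1}_A\sum_{n=1}^{\infty}\|u(n)-v(n)\|_H^2\Bigr]<\infty.
\end{equation*}
Hence, $\mathbb{P}$-almost surely on $A$, both $\int_0^\infty\|P_N(u-v)\|_H^2\,\mathrm{d}s<\infty$ and $\|u(n)-v(n)\|_H\to 0$ as $n\to\infty$. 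Choose now $K>0$ large enough that $B_K:=A\cap\bigl\{\int_0^{\infty}\|P_N(u-v)\|_H^2\,\mathrm{d}s<K\bigr\}$ has positive probability; this is possible by monotone continuity of probability, since $\mathbb{P}(B_K)\nearrow\mathbb{P}(A)>0$ as $K\to\infty$. On $B_K$, the stopping time $\sigma_K$ defined in \eqref{def_sigma} is infinite, so Remark~\ref{rem_eq} forces $\tilde v\equiv v$ pathwise, and therefore $\|\tilde v(n)-u(n)\|_H=\|v(n)-u(n)\|_H\to 0$. Thus $\mathbb{P}\bigl(\lim_{n\to\infty}\|\tilde v(n)-u(n)\|_H=0\bigr)\ge \mathbb{P}(B_K)>0$, as desired.

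The key structural point, and the only nontrivial ingredient, is that the decay in Corollary~\ref{cor_tau_R_beta} is \emph{exponential in $t$} regardless of the growth condition on $G$: the three assumptions (G2)(i)--(iii) intervene only through the rate at which $\mathbb{P}(\tau_{R,\beta}<\infty)\to 0$ in $R$, and here we merely need $\mathbb{P}(A)>0$ for \emph{some} $R$. The main obstacle is tracking the correct form of $\bar N$ across the three regimes and checking that, in case (iii), the milder viscosity condition \eqref{condizione32} suffices for $\mathbb{P}(\tau_{R,\beta}<\infty)$ to tend to zero; this is precisely the content of Proposition~\ref{tau_pol_iii}, whose admissible exponents $p\in(0,\nu\lambda_1/(2\tilde K_3^2)-3/4)$ form a nonempty interval exactly when \eqref{condizione32} holds. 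The stronger viscosity condition needed for asymptotic stability (cf.\ Theorem~\ref{asy_sta}) plays no role here.
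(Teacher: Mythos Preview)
Your argument is correct and uses the same core ingredients as the paper (Corollary~\ref{cor_tau_R_beta}, Propositions~\ref{tau_exp}--\ref{tau_pol_iii}, and Remark~\ref{rem_eq}), but the execution is more direct. The paper introduces explicit events $B_n=\{\|v(n)-u(n)\|_H^2+\int_n^{n+1}\|P_N(u-v)\|_H^2\,\mathrm{d}s>n^{-2}\}$ and runs Borel--Cantelli on $\{B_n\cap\{\tau_{R,\beta}=\infty\}\}$, then separately controls the contribution on $[0,m^*]$ via an event $E_{R^*,m^*}$ bounded with the moment estimates of Lemmata~\ref{lem_A2} and~\ref{lem_v}; finally it assembles $K=R^*+\sum_{n\ge m^*}n^{-2}$. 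You bypass all this by observing that the exponential-in-$t$ bound of Corollary~\ref{cor_tau_R_beta} is already integrable/summable on $[0,\infty)$, so on $A=\{\tau_{R_0,\beta}=\infty\}$ one gets both $\int_0^\infty\|P_N(u-v)\|_H^2\,\mathrm{d}s<\infty$ and $\|u(n)-v(n)\|_H\to 0$ almost surely, and then choose $K$ by monotone continuity. This saves the splitting of the time axis and the appeal to Lemmata~\ref{lem_A2}--\ref{lem_v}. The paper's route has the minor advantage of producing an explicit lower bound $\mathbb{P}(\cdot)>\tfrac12$, whereas you only get $\mathbb{P}(B_K)>0$; for the purposes of Theorem~\ref{GHMR} either suffices.
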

\begin{proof}
For any  $n \in \mathbb{N}$ we introduce the events 
\begin{equation}
B_n:= \left\{ \|v(n)-u(n)\|^2_H +\int_n^{n+1}\|P_N(v(s)-u(s)\|^2_H\, {\rm d}s > \frac{1}{n^2}\right\},
\end{equation}
and, for $R$ and $m>0$ to be chosen later on,
\begin{equation}
\label{E_R_m}
E_{R,m}:=\left\{\int_0^{m}\|P_N(v(s)-u(s)\|^2_H\,{\rm d}s  > R\right\}.
\end{equation}
We set 
\begin{equation*}
B:= \bigcap_{m=1}^{\infty} \bigcup_{n=m}^{\infty} B_n.
\end{equation*}
Next we fix any suitably large value of $\bar N$ so that either Proposition \ref{tau_exp} or \ref{tau_pol_ii} or \ref{tau_pol_iii} (according to which assumption of the operator $G$ we consider: either (G2)(i) or (G2)(ii) or (G2)(iii)) and Corollary \ref{cor_tau_R_beta} hold. We fix $N\ge\bar N$ and set $\lambda=\frac{\nu\lambda_N}2$ in the nudged equation \eqref{NS_abs_nud}. 
We consider the stopping time $\tau_{R, \beta}$ defined in \eqref{tau_R_beta} and
 write 
\begin{equation*}
\mathbb{P}(B)=\mathbb{P}\left(B \cap \{\tau_{R, \beta}=\infty\} \right)+ \mathbb{P}\left(B \cap \{\tau_{R, \beta}<\infty\} \right).
\end{equation*}
Thanks to the Borel-Cantelli lemma we have that $\mathbb{P}\left(B \cap \{\tau_{R, \beta}=\infty\} \right)=0$ for any $R, \beta>0$. 
In fact, thanks to the Chebychev inequality, the Fubini theorem and Corollary \ref{cor_tau_R_beta}, for any $n \in \mathbb{N}$
\begin{align*}
\mathbb{P}\left(B_n \cap \{\tau_{R, \beta}=\infty\} \right)
&\le n^2 \mathbb{E} \left[\pmb{1}_{(\tau_{R, \beta}=+\infty)} \left(\|v(n)-u(n)\|^2_H+ \int_n^{n+1}\|v(s)-u(s)\|^2_H\, {\rm d}s\right)\right]
\\
&\lesssim_{N,\nu} e^{R+ \beta}\|u_0-v_0\|^2_H n^2 e^{-\frac{\nu \lambda_N}{4}n},
\end{align*}
so that $\sum_{n=1}^{\infty}\mathbb{P}(B_n \cap \{\tau_{R, \beta}=\infty\})< \infty$. Hence 
$\mathbb{P}\left(B \cap \{\tau_{R, \beta}=\infty\} \right)=0$. 

Thus, along with the estimates \eqref{tau_est_i}, \eqref{tau_est_ii} and \eqref{tau_est_iii} from Propositions \ref{tau_exp}, \ref{tau_pol_ii} and \ref{tau_pol_iii}, respectively, we can select suitable values of $\beta$ such that, for any $R>0$, we have
\[
\mathbb{P}(B)=\mathbb{P}(B \cap \{\tau_{R, \beta}< \infty\}) \le \mathbb{P}(\tau_{R, \beta}< \infty)
\le 
\begin{cases}
e^{-CR} & \text{under (G2)(i)} 
\\[2mm]
\dfrac{C}{R^p} & \text{ for any } p>0, 	\ \text{under (G2)(ii)} 
\\[2mm]
\dfrac{C}{R^p} &  \text{ for  }p\in(0,\frac{\nu \lambda_1}{2 \tilde{K}_3^2}- \frac 34), 	\ \text{under (G2)(iii)} 
\end{cases}
\]
where the above constants $C$ do not depend on $R$.
By choosing  
$R^*$
 sufficiently large, we have that $\mathbb{P}(B)$
 is close to $0$, hence $\mathbb{P}(B^c)$ is close to $1$.
  Hence, from the continuity from below, we can thus find $m^*>0$ sufficiently large so that 
\begin{equation*}
\mathbb{P}\left( \bigcap_{n=m^*}^{\infty}B_n^c \right)>\frac34. 
\end{equation*}
For this fixed value of $m^*$ we now consider the set $E_{R^*, m^*}$ introduced in \eqref{E_R_m}. 
The Chebychev inequality, \eqref{sup_q_est} and \eqref{est_v}  with $q=2$ yield
\begin{align*}
\mathbb{P}(E_{R^*, m^*})& \le \frac{\mathbb{E}\left[ \int_0^{m^*}\|P_N(v(s)-u(s)\|^2_H\,{\rm d}s\right]}{R^*}
\le \frac{C m^* }{R^*}, 
\end{align*}
for some constant $C$ dependent on the initial data $\|u_0\|^2_H, \|v_0\|^2_H$ and the parameters that appears in the statement of Lemmata \ref{lem_A2} and \ref{lem_v}.
It then follows, upon taking $R^*$ possibly larger,  that $\mathbb{P}(E^c_{R^*, m^*})>\frac 34$, hence
\begin{footnote}
{Here we use the inequality $\mathbb{P}(A \cap B)\ge \mathbb{P}(A)+\mathbb{P}(B)-1$.}
\end{footnote}
\begin{equation}
\label{sti_uniq_2}
\mathbb{P}\left(E_{R^*,m^*}^c \cap \bigcap_{n=m^*}^{\infty}B_n^c \right) >\frac 12.
\end{equation}
At this point we notice that, on the set $E_{R^*,m^*}^c \cap \bigcap_{n=m^*}^{\infty}B_n^c$, 
by splitting the integral as the sum of the integrals over the time intervals $[0,m^*]$, $[m^*, m^*+1]$, 
$[m^*+1, m^*+2]$ and so on,  we have 
\begin{equation*}
 \int_0^\infty\|P_N(v(s)-u(s)\|^2_H\,{\rm d}s
 \le 
 R^* + \sum_{n=m^*}^{\infty} \frac{1}{n^2}  < \infty.
 \end{equation*}
 We now choose 
 \[
 K=R^* + \sum_{n=m^*}^{\infty} \frac{1}{n^2} 
 \]
 as a parameter defining the stopping time $\sigma_K$ defined in \eqref{def_sigma}.
 Notice the two inclusions
 \[
 E_{R^*,m^*}^c \cap \bigcap_{n=m^*}^{\infty}B_n^c
\  \subseteq\  \{\sigma_K=\infty\}
 \]
 and for any $m^*$
 \[
  \bigcap_{n=m^*}^{\infty}B_n^c\subseteq 
\Big \{\lim_{n\to\infty} \|u(n)-v(n)\|^2_H =0\Big\}.
 \]
Thus it follows
\begin{multline}
\mathbb{P} \left(\lim_{n \rightarrow \infty}\|\tilde v(n)-u(n)\|^2_H=0 \right)
 \ge \mathbb{P}\left(\lim_{n \rightarrow \infty}\|\tilde v(n)-u(n)\|^2_H=0  \ \cap \{\sigma_K=+ \infty\}\right)
\\
= \mathbb{P}\left(\lim_{n \rightarrow \infty}\|v(n)-u(n)\|^2_H=0  \ \cap \{\sigma_K=+ \infty\}\right)
\ge \mathbb{P} \left( E_{R^*,m^*}^c \cap \bigcap_{n=m^*}^{\infty}B_n^c \right)
\end{multline}
where the  equality in the above relation steams from the fact that $v=\tilde v$ on $\{\sigma_K=+ \infty\}$ (see Remark \ref{rem_eq}).
 

Therefore for the previous choice of the parameters $R^*$, $m^*$ and $K$, from \eqref{sti_uniq_2}  we obtain 
\begin{equation*}
\mathbb{P}\left(\lim_{n \rightarrow \infty}\|\tilde v(n)-u(n)\|^2_H=0 \right)
\ge \mathbb{P} \left( E_{R^*,m^*}^c \cap \bigcap_{n=m^*}^{\infty}B_n^c \right)
>\frac 12
\end{equation*}
and this concludes the proof.
\end{proof}

We are ready to prove Theorem \ref{unique_thm}.
\begin{proof}[Proof of Theorem \ref{unique_thm}]
The uniqueness of the invariant measure is a consequence of Lemma \ref{abs_cont_lem} and Proposition \ref{prop_coupling} thanks to which we verify the assumptions of Theorem \ref{GHMR}. The proof is as follows.
For any $u_0, v_0 \in H$ we consider the measure $\xi_{u_0,v_0}$ on $H^{\mathbb{N}} \times H^{\mathbb{N}}$ given by the law of $(u(n), \tilde v(n))_{n \in \mathbb{N}}$, where $u$ and $\tilde v$ solve equations  \eqref{NS_abs} and \eqref{NS_abs_nud_stop}, respectively, with corresponding initial data $u_0, v_0$. Thanks to Lemma \ref{abs_cont_lem} we have that, provided Assumption (G3) holds with $M \ge N$, $\pi_2(\xi_{u_0,v_0}) \sim \mathbb{P}_{v_0}$. We therefore have that $\xi_{u_0, v_0} \in \widetilde{\mathcal{C}}\left(\mathbb{P}_{u_0}, \mathbb{P}_{v_0}\right)$. From the definition of $\xi_{u_0, v_0}$ and Proposition \ref{prop_coupling} we have 
\begin{equation*}
\xi_{u_0, v_0}(D)
=\mathbb{P}\left( \lim_{n \rightarrow \infty}\|\tilde v(n)-u(n)\|_H=0 \right) >0,
\end{equation*}
for the suitable choice of parameters $\lambda, N, K$ (that appears in the equation for $\tilde v$) made in Proposition \ref{prop_coupling}; in particular, $N\ge \bar N$, with $\bar N$ as in Proposition \ref{prop_coupling}. Since the test functions $\mathcal{G}$ defined in \eqref{G} determine measures on $(H, \|\cdot\|_H)$, thanks to Theorem \ref{GHMR} we conclude that there exists at most one invariant measure for $P$ in $\mathcal{P}(H)$, provided Assumption (G3) holds with $M \ge\bar N$. 
\end{proof}

\subsection{Asymptotic stability}
\label{asy_sta_sec}
Let us now come to the issue of asymptotic stability of the invariant measure.

 \begin{theorem}
\label{asy_sta}
Assume (G1) and 
(G2),   with the additional condition
\begin{equation}\label{condizione72}
 \nu > \frac{11\tilde{K}^2_3}{2\lambda_1}
 \end{equation}
for the case (G2)(iii). 
Then there exists  a positive integer 
\begin{equation*}
\bar N=\begin{cases}
\bar N(L_G, K_1, \nu, \|f\|_{V^*}) & \text{under (G2)(i)},
\\
\bar N(L_G, K_2, \tilde{K}_2, \nu, \lambda_1, \gamma, \|f\|_{V^*})  &	\ \text{under (G2)(ii)},
\\
\bar N(L_G, K_3, \tilde{K}_3, \nu, \lambda_1, \|f\|_{V^*}) &\text{under (G2)(iii)},
\end{cases}
\end{equation*}
 such that if (G3) holds with $M \ge \bar N$
 , then the transition semigroup $P$ associated to equation \eqref{NS_abs} possesses at most one ergodic invariant measure $\mu$ on $H$ and
\begin{equation*}
\lim_{t \rightarrow \infty} \|P_t^*\delta_{u_0}-\mu\|_*= 0 \quad \forall \ u_0 \in H.
\end{equation*}
\end{theorem}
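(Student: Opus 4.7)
The plan is to apply Theorem \ref{KS} to the Markov semigroup $(P_t)_{t\ge 0}$ of equation \eqref{NS_abs}. The Feller property was already obtained in Proposition \ref{prop_mis_inv}, and since \eqref{condizione72} is strictly stronger than \eqref{condizione32}, existence and uniqueness of an invariant measure follow respectively from Proposition \ref{prop_mis_inv} and Theorem \ref{unique_thm} (taking $\bar N$ at least as large as the threshold from Theorem \ref{unique_thm} and $M\ge \bar N$ in (G3)). It remains to exhibit, for every pair of initial conditions $u_0, v_0 \in H$, a single generalized coupling $\xi_{u_0, v_0} \in \widehat{\mathcal{C}}(\mathbb{P}_{u_0}, \mathbb{P}_{v_0})$ with $\pi_1(\xi_{u_0, v_0}) \sim \mathbb{P}_{u_0}$ for which $\xi_{u_0,v_0}(D^n_\varepsilon) \to 1$ as $n \to \infty$ for every fixed $\varepsilon > 0$.

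Following the same architecture as in the proof of Theorem \ref{unique_thm}, take $\xi_{u_0, v_0}$ to be the law of $(u(n), \tilde v(n))_{n \in \mathbb N}$, where $u$ solves \eqref{NS_abs} from $u_0$ and $\tilde v$ solves the nudged stopped equation \eqref{NS_abs_nud_stop} from $v_0$ with $N \ge \bar N$ and $\lambda = \nu \lambda_N/2$. Then $\pi_1(\xi_{u_0, v_0}) = \mathbb{P}_{u_0}$ trivially, while $\pi_2(\xi_{u_0, v_0}) \sim \mathbb{P}_{v_0}$ is given by Lemma \ref{abs_cont_lem}. Since $\tilde v \equiv v$ on $\{\sigma_K = +\infty\}$ (Remark \ref{rem_eq}), Chebychev's inequality yields
\begin{equation*}
\mathbb{P}\bigl(\|u(n) - \tilde v(n)\|_H > \varepsilon\bigr) \le \mathbb{P}(\sigma_K < \infty) + \varepsilon^{-2}\, \mathbb{E}\bigl[\|u(n)-v(n)\|_H^2\bigr],
\end{equation*}
and the second term tends to zero as $n \to \infty$ by the Foias--Prodi estimate of Theorem \ref{FP_cor}.

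The new point, relative to the uniqueness proof, is to force the first term to vanish. By the definition of $\sigma_K$ and monotone convergence,
\begin{equation*}
\mathbb{P}(\sigma_K < \infty) = \mathbb{P}\!\left(\int_0^\infty \|P_N(u(s) - v(s))\|_H^2\,{\rm d}s > K\right) \xrightarrow[K\to\infty]{} 0,
\end{equation*}
provided that the integral inside is almost surely finite. By Tonelli and Theorem \ref{FP_cor}, its expectation is controlled by $C\int_0^\infty e^{-\delta s}\,{\rm d}s$ in case (G2)(i) and, after splitting off a bounded short--time contribution using the energy bounds of Theorem \ref{ex_uniq_sol}, by a quantity of the form $C + C\int_1^\infty s^{-p}\, {\rm d}s$ in cases (G2)(ii) and (G2)(iii). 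This is finite precisely when one is allowed to choose $p > 1$: automatic in case (G2)(ii) where $p$ is arbitrary, but in case (G2)(iii) the admissible range $p \in \bigl(0, \tfrac{\nu \lambda_1}{4 \tilde K_3^2} - \tfrac{3}{8}\bigr)$ of Theorem \ref{FP_cor}(iii) contains some $p > 1$ if and only if $\nu > \tfrac{11 \tilde K_3^2}{2\lambda_1}$, which is exactly \eqref{condizione72}. This is what justifies the strengthened viscosity condition.

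The remaining and, in my view, most delicate point is that $K$ must be chosen independently of $\varepsilon$ in the very definition of $\xi_{u_0, v_0}$. I would handle this by letting $K \to \infty$, i.e.\ removing the truncation entirely, so that the shift $h(s)\pmb{1}_{s\le \sigma_K}$ is replaced by $h$ on the whole half line and consequently $\tilde v \equiv v$ pathwise. The associated Girsanov change of measure on $C([0,+\infty);H)$ remains valid once the Dol\'eans--Dade exponential driven by $h$ is a genuine martingale on $[0,+\infty)$; this can be verified by localization along the sequence $\sigma_{K_j}$ with $K_j \nearrow \infty$, combining Lemma \ref{abs_cont_lem} with the moment bounds of Theorem \ref{ex_uniq_sol} and the almost sure finiteness of $\int_0^\infty \|h(s)\|_U^2 \,{\rm d}s$ guaranteed by the integrability argument above. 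With such a coupling the term $\mathbb{P}(\sigma_K<\infty)$ drops from the estimate, so $\limsup_n \mathbb{P}(\|u(n) - \tilde v(n)\|_H > \varepsilon) = 0$ for every $\varepsilon > 0$, i.e.\ \eqref{hyp_KS} holds, and Theorem \ref{KS} then yields $\|P_t^*\delta_{u_0} - \mu\|_* \to 0$ for every $u_0 \in H$.
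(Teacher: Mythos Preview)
Your proposal is correct and lands on exactly the same argument as the paper: the generalized coupling is built from the pair $(u,v)$ with $v$ solving the \emph{unstopped} nudged equation \eqref{NS_abs_nud}, the marginal condition $\pi_2(\xi_{u_0,v_0})\ll\mathbb P_{v_0}$ follows from Girsanov once $\int_0^\infty\|h(s)\|_U^2\,{\rm d}s<\infty$ a.s., and this in turn is obtained from $\int_0^\infty\mathbb E\|u(s)-v(s)\|_H^2\,{\rm d}s<\infty$ via Theorem~\ref{FP_cor} with an exponent $p>1$, which is precisely where \eqref{condizione72} enters in case (G2)(iii). The only narrative difference is that you first set up the stopped coupling $(u,\tilde v)$ and then argue that the truncation must be removed, whereas the paper drops the localization term $\pmb{1}_{s\le\sigma_K}$ from the outset and works directly with $(u,v)$; consequently your intermediate estimate involving $\mathbb P(\sigma_K<\infty)$ is not needed in the paper's version, and your Girsanov justification ``by localization along $\sigma_{K_j}$'' amounts to the paper's direct appeal to the a.s.\ finiteness of $\int_0^\infty\|h\|_U^2\,{\rm d}s$.
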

\begin{proof}
The proof relies on Theorem \ref{KS} and it is based on a stochastic control argument similar to the one developed in Section \ref{main_result_sec}. However here we have to drop the localization term $\pmb{1}_{\sigma_K>t}$ in order to satisfy the assumptions of Theorem \ref{KS}. This is not an issue since, exploiting the Foias-Prodi estimates, we can show that the law of the pair $(u, v)$, with $u$ the solution to the Navier-Stokes equation
\eqref{NS_abs} and $v$ the solution to its nudged equation \eqref{NS_abs_nud}, is a generalized coupling that satisfies the assumptions of Theorem \ref{KS}.
The consequences of not using the localization term are seen only when working under assumption (G2)(iii) where the condition on viscosity becomes even stronger (see Remark \ref{rem_asy_sta} for further comments).

For the sake of exposition we divide the proof in three steps.
\begin{itemize}
\item [(i)]
Let Assumption (G3) hold with $M \ge N$. Set 
\begin{equation*}
h(t):= \lambda g(v(t))(P_N(u(t)-v(t)), \quad t \ge 0,
\end{equation*}
with $\lambda=\frac{\nu \lambda_N}{2}$, and define 
\begin{equation*}
\widetilde{W}(t):=W(t)+ \int_0^t h(s)\, {\rm d}s, \quad t\ge 0.
\end{equation*}
The equation \eqref{NS_abs_nud} for $v$ can be written in the form
\begin{equation}
{\rm d}v(t) + \left[\nu Av(t)+B(v(t),v(t))\right]\,{\rm d}t=G(v(t))\,{\rm d}\widetilde{W}(t)+ f \,{\rm d}t.
\end{equation}
Given any positive constant $c>0$, by the Chebychev inequality we infer
\begin{align}
\label{media-quadrato}
\mathbb{P}\left( \int_0^\infty\|h(s)\|_U^2\, {\rm d}s>c\right )
&\le \frac 1c \mathbb{E} \left[ \int_0^\infty\|h(s)\|_U^2 \, {\rm d}s\right]
\\ \notag
&\le \frac{\lambda^2}{c}\left( \sup_{x\in H}\|g(x)\|^2_{L(U,H)}\right)\mathbb{E}\int_0^\infty\|u(s)-v(s)\|_H^2\, {\rm d}s.
\end{align}
The term depending on $g$ is bounded thanks to assumption \eqref{bound_g}. 
Moreover, Theorem \ref{FP_cor} yields  
\begin{equation}
\label{f}
\mathbb{E}\left[ \|u(s)-v(s)\|^2_H\right]\le f(s):=
\begin{cases}
C e^{-\delta s}, & \text{under (G2)(i)}
\\[2mm]
\dfrac{C}{s^p}, \quad  \forall \ p>0, & \text{under (G2)(ii)}
\\[2mm]
\dfrac{C}{ s^p},  \quad \forall  \ p \in ( 0,\frac{\nu \lambda_1}{4 \tilde{K}_3^2}-\frac 38), & \text{under (G2)(iii)}
\end{cases}
\end{equation}
with $C$ positive constants depending on the parameters of the equations and the initial data but independent of $s$. Thus 
\[
\int_0^\infty  \mathbb{E}  \left[ \|u(s)-v(s)\|^2_H \right]\, {\rm d}s 
\le 
\int_0 ^\infty f(s) {\rm d}s.
\]
We consider the latter integral;  under (G2)(i) it is a finite number, and the same holds under (G2)(ii) or (G2)(iii) by choosing  $p>$1. We notice that under  (G2)(iii) it is necessary that $1<\frac{\nu \lambda_1}{4 \tilde{K}_3^2}-\frac 38$, which explains \eqref{condizione72}.

Thus, by letting $c$ go to infinity in \eqref{media-quadrato} we infer
\begin{equation}
\mathbb{P}\left(\int_0^\infty \|h(s)\|^2_U\, {\rm d}s< \infty \right)=1.
\end{equation}
By the Girsanov Theorem the law of $\widetilde{W}$ is absolutely continuous w.r.t. the law of $W$. In turns, the law of the solution $v$ to the nudged equation \eqref{NS_abs_nud} is absolutely continuous w.r.t. the law of the solution $u$ to equation \eqref{NS_abs} with initial datum $v_0$, as measures on $C([0, \infty);H)$. 

\item [(ii)] We check condition \eqref{hyp_KS} of Theorem \ref{KS}.

Fix and choose $\varepsilon>0$. From the Foias-Prodi estimates in Theorem \ref{FP_cor}, provided $N \ge \bar N$ (where $\bar N$ is as in Theorem \ref{FP_cor}), we infer 
\begin{equation}
\mathbb{P}\left(\|u(n)-v(n)\|^2_H >\varepsilon \right) \le \frac{1}{\varepsilon}\mathbb{E}\left[ \|u(t)-v(t)\|^2_H \right] \le \frac{f(n)}{\varepsilon}
\end{equation}
with $f$ as in \eqref{f}. Since $f(n) \rightarrow 0$ as $n \rightarrow \infty$, it follows that 
\begin{equation}
\lim_{n \rightarrow \infty}\mathbb{P}\left(\|u(n)-v(n)\|^2_H \le\varepsilon \right) =1.
\end{equation}

\item[(iii)] Steps (i) and (ii) lay the ground to apply Theorem \ref{KS}.
First we observe that the semigroup $P$ is Feller, as already proved in Proposition \ref{prop_mis_inv}.
For any $u_0, v_0 \in H$ we consider the measure $\xi_{u_0,v_0}$ on $H^{\mathbb{N}} \times H^{\mathbb{N}}$ given by the law of the associated random vector $(u(n), v(n))_{n \in \mathbb{N}}$, where $u$ and $v$ solve \eqref{NS_abs} and \eqref{NS_abs_nud}, respectively, with corresponding initial data $u_0, v_0$. We have that $\pi_1(\xi_{u_0,v_0})= \mathbb{P}_{u_0}$. Moreover, from Step (i), we have that $\pi_2(\xi_{u_0,v_0})\sim \mathbb{P}_{v_0}$, provided Assumption (G3) holds with $M \ge N$. Thus $\xi_{u_0,v_0} \in \widetilde C(\mathbb{P}_{u_0}, \mathbb{P}_{v_0})$.
From the definition of $\xi_{u_0, v_0}$ and Step (ii) we have, for any $\varepsilon>0$,
\begin{equation*}
\lim_{n \rightarrow \infty}\xi_{u_0, v_0}(D_\varepsilon)= \lim_{n \rightarrow \infty}\mathbb{P}\left((u(n),v(n))_{n \in \mathbb{N}} \in D_\varepsilon  \right)=\lim_{n \rightarrow \infty}\mathbb{P}\left(\|v(n)-u(n)\|_H \le \varepsilon \right) =1,
\end{equation*}
imposing $N\ge \bar N$, with $\bar N$ as in Theorem \ref{FP_cor}. Since the assumptions of Theorem \ref{KS} are verified, we conclude that there exists $\bar N$ sufficiently large such that, provided Assumption (G3) holds with $M \ge\bar N$, there exists
 at most one invariant measure for $P$ which is asymptotically stable.
 \end{itemize}
 \vspace{-5mm}
\end{proof}

\begin{remark}
\label{rem_asy_sta}
The introduction of the localization term $\pmb{1}_{s \le \sigma_k}$ is entirely superfluous working under (G2)(i)-(ii), while it allows the condition on the dissipation coefficient to be weakened by working under (G2)(iii). We observe that in \cite{KS} the authors, having to deal with an additive noise, emphasize that the localization term is entirely superfluous to show the uniqueness and asymptotic stability of the invariant measure: the reason is roughly speaking that in the proof they exploit \textit{pathwise} Foias-Prodi estimates  with an exponential decay. Our condition (G2)(i) most closely resembles the case considered in \cite{KS}. \end{remark}

\section{Final remarks}
\label{final_rem_sec}

We have shown how  the asymptotic generalized coupling techniques from \cite{GHMR17} and \cite{KS} can be successfully adapted to prove the uniqueness and the asymptotic stability of the invariant measure 
for the stochastic Navier-Stokes equations in the presence of multiplicative noise in an effectively elliptic setting. 

The key tool for proving these results are the Foias-Prodi estimates in expected value. These show different decay in time depending on the noise assumptions: exponential in the bounded noise case (compare the result with \cite{Oda2008}), polynomial for any exponent $p>0$ in the sublinear growth noise case, polynomial for any exponent $0<p<\frac{\nu \lambda_1}{4 \tilde{K}_3^2}-\frac 38$ in the linear growth noise case. Working under (G2)(i)-(ii) the Foias-Prodi estimates have sufficiently nice behavior to prove the results of uniqueness and asymptotic stability of the invariant measure at once. In these cases there is no need to introduce the localization term $\pmb{1}_{s \le \sigma_k}$ i.e. it is sufficient to use the techniques of \cite{KS} and the result follows from Theorem \ref{asy_sta}. Instead, the Foias-Prodi estimates that we obtain by working under (G2)(iii) impose the condition $0<p<\frac{\nu \lambda_1}{4 \tilde{K}_3^2}-\frac 38$ on the admissible parameters that give the polynomial decay. This fact has consequences for the conditions to be imposed on the viscosity coefficient in order to have uniqueness ($\nu>\frac{3\tilde K_3^2}{2\lambda_1}$) and asymptotic stability ($\nu>\frac{11\tilde K_3^2}{2\lambda_1}$) of the invariant measure: the localization term we can introduce to use the results of \cite{GHMR17} allows for the weaker condition. We observe that in the case of a noise with a linear growth we should not be surprised that a condition on viscosity appears: see, for example, \cite{Fla_Gat} where a condition  appeared just for the existence of the invariant measure (compare with Proposition \ref{prop_mis_inv} where the condition  $\nu>\frac{\tilde K_3^2}{2\lambda_1}$ appears). 

We conclude by pointing out that the problem we addressed here was inspired by Remark 3.7 in \cite{Oda2008} in which Odasso predicts (without proving it) polynomial mixing 
when the  covariance of the noise has  linear or sublinear growth, that is when we assume (G2)(ii) or (G2)(iii).
Our aim has been to deal with these assumptions and we obtained the uniqueness of the invariant measure and the  convergence  to it for large time. No quantitative mixing results are available so far, as in \cite{GHMR17} and \cite{KS} ,  but this  is under investigation. Finally, with a bounded multiplicative noise our technique  is  simpler than that of 
\cite{Oda2008}.

We expect that the different decays in time in our Foias-Prodi estimates, depending on the  three noise assumptions,  should lead to demonstrating different types of quantitative mixing (exponential/polynomial).



\appendix
\section{A priori estimates}\label{app_A}
In this Appendix we collect some apriori estimates on the solution to the Navier-Stokes equation \eqref{NS_abs} and  its nudged equation \eqref{NS_abs_nud}. We recall that we assume $u_0 \in H$, $\nu>0$ and $f\in V^*$.

\subsection{Moment estimates}
The following two lemmata collect some a priori  estimates and moments bounds on the solution to system \eqref{NS_abs}, according to the different Assumptions (G2)(i), (ii) or (iii).
\begin{lemma}
\label{lem_A1}
Assume (G1)-(G2) with the additional condition 
\begin{equation}\tag{\ref{visco1}}
\nu>\frac{\tilde{K}_3^2}{2\lambda_1}
\end{equation}
for the case (G2)(iii). Then there exist positive constants 
$a$ an $b$ such that the strong solution to 
the Navier-Stokes equation  \eqref{NS_abs} satisfies 
\begin{equation}\label{stima-media-energia}
\mathbb{E}\left[\|u(t)\|^2_H\right]+ a\int_0^t \mathbb{E}\left[\|u(s)\|^2_V\right]\, {\rm d}s \le \|u_0\|^2_H+bt, \qquad t >0, 
\end{equation}
with 
\begin{equation}
\label{a}
a=
\begin{cases}
\nu, & \text{under Assumption (G2)(i) or (G2)(ii)}
\\
\nu-\frac {\tilde K_3^2}{2\lambda_1}, & \text{under Assumption (G2)(iii)}
\end{cases}
\end{equation}
and 
\begin{equation}
\label{b}
b=
\begin{cases}
K_1^2 + \frac 1 \nu\|f\|^2_{V^*}, & \text{under Assumption (G2)(i)}
\\
b_1+ b_2\|f\|^2_{V^*},& \textit{under Assumption (G2)(ii)}
\\
b_3 K_3^2 +b_4 \|f\|^2_{V^*}, & \text{under Assumption (G2)(iii)}
\end{cases}
\end{equation}
where $b_1=b_1( \nu,  \lambda_1, K_2, \tilde{K}_2,\gamma)$, $b_2=b_2(\nu, \lambda_1, \tilde{K}_2)$, $b_3=b_3(\nu, \lambda_1, K_3,\tilde{K}_3)$
and $b_4=b_4(\nu, \lambda_1, \tilde{K}_3)$ are positive constants. 
\end{lemma}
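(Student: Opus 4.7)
The plan is to apply It\^o's formula to $\|u(t)\|_H^2$ and exploit the cancellation \eqref{B=0} of the nonlinear term, then balance the forcing and the diffusion contributions against the dissipation in a case-dependent way. Formally, It\^o together with $\langle B(u,u),u\rangle=0$ yields
\[
\mathrm{d}\|u(t)\|_H^2+2\nu\|u(t)\|_V^2\,\mathrm{d}t
=2\langle f,u(t)\rangle\,\mathrm{d}t+\|G(u(t))\|_{L_{HS}(U,H)}^2\,\mathrm{d}t+2\langle u(t),G(u(t))\,\mathrm{d}W(t)\rangle.
\]
I bound the forcing by Young's inequality, $2\langle f,u\rangle\le\varepsilon\|u\|_V^2+\varepsilon^{-1}\|f\|_{V^*}^2$, and I estimate $\|G(u)\|_{L_{HS}}^2$ separately in each regime. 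Under (G2)(i) it is simply $\le K_1^2$. Under (G2)(ii), the bound $(K_2+\tilde K_2\|u\|_H^\gamma)^2\le 2K_2^2+2\tilde K_2^2\|u\|_H^{2\gamma}$ combined with Young applied to $\|u\|_H^{2\gamma}$ (legitimate since $2\gamma<2$) and with Poincar\'e \eqref{lambda_1} yields a bound $\mu\|u\|_V^2+C(\mu,K_2,\tilde K_2,\gamma,\lambda_1)$ with $\mu>0$ arbitrarily small. Under (G2)(iii), the splitting $(K_3+\tilde K_3\|u\|_H)^2\le(1+\eta^{-1})K_3^2+(1+\eta)\tilde K_3^2\|u\|_H^2$ followed by \eqref{lambda_1} gives $(1+\eta)\tilde K_3^2\lambda_1^{-1}\|u\|_V^2+(1+\eta^{-1})K_3^2$.

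After rearrangement, the coefficient of $\|u\|_V^2$ on the left becomes $2\nu-\varepsilon-\mu$ in cases (i)--(ii), which trivially exceeds $\nu=a$ for $\varepsilon,\mu$ small, and $2\nu-\varepsilon-(1+\eta)\tilde K_3^2/\lambda_1$ in case (iii). In the latter, the \emph{strict} condition \eqref{visco1} provides exactly the slack needed: any $\varepsilon,\eta>0$ with $\varepsilon+\eta\tilde K_3^2/\lambda_1<\nu-\tilde K_3^2/(2\lambda_1)$ makes this coefficient at least $a=\nu-\tilde K_3^2/(2\lambda_1)$. The constants $b_1,\ldots,b_4$ then emerge as explicit rational functions of $\varepsilon,\mu,\eta$ and of the noise parameters, matching the dependencies claimed in the statement.

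To convert the resulting pathwise differential inequality into the expected-value bound \eqref{stima-media-energia}, I would localize with the stopping times $\tau_R:=\inf\{t\ge 0:\|u(t)\|_H^2+\int_0^t\|u(s)\|_V^2\,\mathrm{d}s\ge R\}$, so that the stochastic integral stopped at $t\wedge\tau_R$ is a genuine martingale of zero expectation. Taking expectation, applying Fubini on the time integral, and letting $R\to\infty$ via Fatou's lemma on the left and monotone convergence on the right---using the moment estimate \eqref{q_mom_sol} of Theorem \ref{ex_uniq_sol} to ensure $\tau_R\to\infty$ almost surely---yields \eqref{stima-media-energia}. The only delicate point is case (iii): it is precisely the strict inequality in \eqref{visco1} that gets consumed in the joint choice of $\varepsilon$ and $\eta$, and any equality instead of strict inequality in \eqref{visco1} would preclude a positive $a$ of the required form.
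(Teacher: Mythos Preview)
Your proof is correct and follows essentially the same approach as the paper: apply It\^o to $\|u\|_H^2$, use \eqref{B=0}, estimate the forcing and the It\^o correction via Young together with Poincar\'e in each (G2) case, and take expectation. The only cosmetic difference is that the paper bypasses your stopping-time localization by directly verifying, via \eqref{q_mom_sol}, that the stochastic integral is a true martingale with bounded quadratic variation; your localize-then-pass-to-the-limit route is equally valid.
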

\begin{proof}
Let $u$ be the solution to \eqref{NS_abs}.
We apply the It\^o formula to  $\|u(t)\|^2_H$. 
Exploiting \eqref{B=0} we infer, $\mathbb{P}$-a.s., for any $t\ge0$,
\begin{multline}
\label{sti1}
\|u(t)\|^2_H + 2 \nu\int_0^t \|u(s)\|^2_V\, {\rm d}s  = \|u_0\|_H^2+ \int_0^t\|G(u(s))\|^2_{L_{HS}(U,H)}{\rm d}s 
\\
 +2 \int_0^t\langle u(s), G(u(s))\, {\rm d}W(s)\rangle + 2 \int_0^t \langle u(s), f\rangle\, {\rm d}s.
\end{multline}
The Young inequality yields, for  arbitrary $\varepsilon>0$
\begin{equation}
\label{G(u)_est}
\|G(u)\|^2_{L_{HS}(U,H)}
\le 
\begin{cases}
K_1^2 & \text{under  (G2)(i)},
\\
2K_2^2 +2 \tilde{K}_2^2 \|u\|^{2\gamma}_H 
\le 2K_2^2 + C(\varepsilon, \gamma) + \varepsilon \frac{ \tilde{K}_2^2}{\lambda_1}\|u\|^2_V & \text{under (G2)(ii)},
\\
(1+\frac 1 \varepsilon)K_3^2 + (1+ \varepsilon)\tilde{K}_3^2 \|u\|^2_H 
         \le (1+\frac 1 \varepsilon)K_3^2 + (1+ \varepsilon) \frac{\tilde{K}_3^2}{\lambda_1}\|u\|^2_V& \text{under (G2)(iii)},
\end{cases}
\end{equation}
and,
for arbitrary $\eta>0$
\begin{equation}
\label{sti_f}
 2\langle u, f\rangle \le 2\|u\|_V\|f\|_{V^*} \le \eta \nu \|u\|_V^2+ \frac{1}{\eta\nu} \|f\|^2_{V^*}.
 \end{equation}
 From \eqref{sti1}, \eqref{G(u)_est} and \eqref{sti_f} we thus obtain, $\mathbb{P}$-a.s., for all $t \ge0$,
\begin{equation}
\label{sti_gen}
\|u(t)\|^2_H + a\int_0^t \|u(s)\|_V^2\, {\rm d}s \le \|u_0\|_H^2 + bt + M(t),
\end{equation}
where the (local)  martingale term is 
\begin{equation}
\label{M}
M(t):= 2 \int_0^t \langle u(s), G(u(s))\, {\rm d}W(s)\rangle,
\end{equation}
and $a, b$ are as in \eqref{a}, \eqref{b} respectively. 
More precisely, the expression of $a$ in case (G2)(i) steams from choosing $\eta=1$, whereas  the expression of $a$ in case (G2)(ii) steams from choosing $\varepsilon$ and $\eta$ small enough so that $(2-\eta)\nu-\varepsilon  \frac{\tilde{K}_2^2}{\lambda_1}\ge \nu$. 
In case (G2)(iii), the coefficient in front of $\int_0^t \|u(s)\|_V^2\, {\rm d}s$ is 
$(2-\eta)\nu -   (1+ \varepsilon) \frac{\tilde{K}_3^2}{\lambda_1}$; 
 thanks to assumption \eqref{visco1} we  can find   $\varepsilon$ and $\eta$  small enough so that 
$(2-\eta)\nu -   (1+ \varepsilon) \frac{\tilde{K}_3^2}{\lambda_1}= \nu-\frac{\tilde{K}_3^2}{2\lambda_1} $
so we conclude the estimate. 
\\
Let us now observe that the stochastic integral is indeed a martingale, in fact we can estimate its quadratic variation as
\begin{equation*}
[M](t) \le 4\int_0^t\|u(s)\|^2_H \|G(u(s))\|^2_{L_{HS}(U,H)}\,{\rm d}s,
\end{equation*}
which is bounded thanks to Assumption (G2) and \eqref{q_mom_sol}.
Therefore by taking the expected values on both sides of \eqref{sti_gen} we get the thesis. 
\end{proof}

\begin{lemma}
\label{lem_A2}
Assume (G1).
\begin{enumerate}[label=$(\roman{*})$]
\item If (G2)(i) holds, then for any $q\ge 2$ the solution to equation  \eqref{NS_abs} satisfies
\begin{equation}
\label{sup_q_est}
\mathbb{E}\left[\|u(t)\|^{q}_H \right]\le \underline C+ \|u_0\|^q_He^{-\bar Ct}
\qquad \forall t>0,
\end{equation}
 where $\bar C=C(q,\nu,\lambda_1)>0$ and $ \underline C=C(q,\nu,\lambda_1, K_1, \|f\|_{V^*})>0$.
\item If (G2)(ii) holds, then for any $q\ge 2$ the solution to equation  \eqref{NS_abs} satisfies \eqref{sup_q_est} with $\bar C=\bar C(q,\nu,\lambda_1)>0$ and $\underline C=\underline C (q,\nu,\lambda_1, K_2, \tilde{K}_2, \gamma, \|f\|_{V^*},\gamma)>0$.
 \item If  (G2)(iii) and \eqref{visco1} hold, 
 then for any $q\in [2,  1+\frac{2\nu \lambda_1}{\tilde{K}_3^2})$ the solution to equation  \eqref{NS_abs} satisfies \eqref{sup_q_est} with $\bar C=\bar C(q,\nu,\lambda_1, \tilde{K_3})>0$ and $\underline C=\underline C (q,\nu,\lambda_1, K_3, \tilde{K_3}, \|f\|_{V^*})>0$.
\end{enumerate}
\end{lemma}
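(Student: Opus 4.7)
The plan is to apply the It\^o formula to $\phi(u)=\|u\|_H^q$ and derive a differential inequality of the form
\[
\mathrm{d}\mathbb{E}\bigl[\|u(t)\|_H^q\bigr]+\bar C\,\mathbb{E}\bigl[\|u(t)\|_H^q\bigr]\mathrm{d}t\le \underline C\, \mathrm{d}t,
\]
after which a direct application of Gr\"onwall's lemma yields \eqref{sup_q_est}. The only subtle point is identifying the correct admissible range of $q$ in case (G2)(iii), which is why a careful bookkeeping of constants is required.

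The second-order Fr\'echet derivative of $\phi$ at $u$ acts as $D^2\phi(u)[h,k]=q\|u\|_H^{q-2}\langle h,k\rangle+q(q-2)\|u\|_H^{q-4}\langle u,h\rangle\langle u,k\rangle$, so the It\^o correction reads
\[
\tfrac{q}{2}\|u\|_H^{q-2}\|G(u)\|_{L_{HS}(U,H)}^2+\tfrac{q(q-2)}{2}\|u\|_H^{q-4}\|G(u)^*u\|_U^2.
\]
Using $\|G(u)^*u\|_U^2\le\|u\|_H^2\|G(u)\|_{L_{HS}(U,H)}^2$, both summands combine into at most $\tfrac{q(q-1)}{2}\|u\|_H^{q-2}\|G(u)\|_{L_{HS}(U,H)}^2$. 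Together with $\langle B(u,u),u\rangle=0$ and the bound $q\|u\|_H^{q-2}\langle u,f\rangle\le \tfrac{q\nu\eta}{2}\|u\|_H^{q-2}\|u\|_V^2+\tfrac{q}{2\nu\eta}\|u\|_H^{q-2}\|f\|_{V^*}^2$, one obtains pathwise
\[
\mathrm{d}\|u\|_H^q+q\nu\Bigl(1-\tfrac{\eta}{2}\Bigr)\|u\|_H^{q-2}\|u\|_V^2\,\mathrm{d}t\le \tfrac{q}{2\nu\eta}\|u\|_H^{q-2}\|f\|_{V^*}^2\,\mathrm{d}t+\tfrac{q(q-1)}{2}\|u\|_H^{q-2}\|G(u)\|_{L_{HS}(U,H)}^2\,\mathrm{d}t+\mathrm{d}M(t),
\]
with $M$ a local martingale that becomes a true martingale after localising by $\tau_n=\inf\{t:\|u(t)\|_H\ge n\}$.

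Next one absorbs the three cases by Poincar\'e and Young. Using $\|u\|_V^2\ge\lambda_1\|u\|_H^2$, the left-hand dissipation bounds $q\nu(1-\tfrac{\eta}{2})\lambda_1\|u\|_H^q$ from below. Under (G2)(i) the diffusion contribution is $\tfrac{q(q-1)}{2}K_1^2\|u\|_H^{q-2}$, bounded by $\varepsilon\|u\|_H^q+C(\varepsilon,q,K_1)$; analogously in case (G2)(ii) the term $\|u\|_H^{q-2+2\gamma}$ appears with $2\gamma<2$ and is absorbed by Young. In case (G2)(iii) one uses $\|G(u)\|_{L_{HS}(U,H)}^2\le (1+\delta^{-1})K_3^2+(1+\delta)\tilde K_3^2\|u\|_H^2$ to produce the \textbf{genuine} quadratic term $\tfrac{q(q-1)(1+\delta)\tilde K_3^2}{2}\|u\|_H^q$, which must be strictly dominated by the dissipation. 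Sending $\eta,\delta\to0$, this requires
\[
q\nu\lambda_1>\tfrac{q(q-1)\tilde K_3^2}{2}\quad\Longleftrightarrow\quad q<1+\tfrac{2\nu\lambda_1}{\tilde K_3^2},
\]
which is precisely the admissible range stated in (iii). Taking expectations (legitimate via the localisation $\tau_n$ together with \eqref{q_mom_sol} and Fatou), one arrives at
\[
\tfrac{\mathrm{d}}{\mathrm{d}t}\mathbb{E}\bigl[\|u(t)\|_H^q\bigr]+\bar C\,\mathbb{E}\bigl[\|u(t)\|_H^q\bigr]\le \underline C,
\]
and Gr\"onwall's lemma gives \eqref{sup_q_est} with $\bar C$ and $\underline C$ of the advertised form.

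The only genuine obstacle is the sharp tracking of constants in case (iii): one has to choose $\eta,\delta,\varepsilon$ small enough so that the coefficient of $\|u\|_H^q$ is exactly the residual $q\nu\lambda_1-\tfrac{q(q-1)\tilde K_3^2}{2}-o(1)$, and to check that this residual is strictly positive precisely on $[2,\,1+2\nu\lambda_1/\tilde K_3^2)$. The martingale property of $M$ after stopping is routine given Assumption (G2) and the finiteness of all moments from Proposition \ref{ex_uniq_sol}, while the passage to the limit $n\to\infty$ in the localisation is standard from monotone/dominated convergence applied to the differential inequality.
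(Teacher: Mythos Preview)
Your proof is correct and follows essentially the same route as the paper: apply It\^o's formula to $\|u\|_H^q$, use $\langle B(u,u),u\rangle=0$, bound the forcing term via Young, use Poincar\'e to convert the dissipation into $q\nu\lambda_1\|u\|_H^q$, absorb the diffusion contribution (via Young in cases (i)--(ii), via the quadratic bound in case (iii)), and conclude by Gr\"onwall. The identification of the admissible range $q<1+2\nu\lambda_1/\tilde K_3^2$ in case (iii) is exactly as in the paper. The only cosmetic differences are that the paper treats $q=2$ separately (invoking the previous lemma) and dispenses with the explicit localisation by citing the moment bound \eqref{q_mom_sol} directly, whereas you spell out the stopping-time argument.
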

\begin{proof}
We start dealing with the case $q=2$.
Using the Poincar\'e inequality \eqref{lambda_1} in the estimate \eqref{stima-media-energia} we obtain 
\[
\mathbb{E}\left[\|u(t)\|^2_H\right]+ \frac a {\lambda_1}\int_0^t \mathbb{E}\left[\|u(s)\|^2_H \right]\, {\rm d}s \le \|u_0\|^2_H+bt
\]
and  we conclude  thanks to the Gronwall lemma.

Now we observe that statement (i) can be proved as statement (ii) by simply taking  $\tilde{K}_2=0$ and $K_2=K_1$. We therefore provide just the proof of statements (ii) and (iii) when $q>2$. 
\begin{footnote}{One could actually prove just statement (iii) and then derive statements (i) and (ii) (see Remark \ref{rem_cases}). We prefer to provide separate proofs for statements (ii) and (iii) to emphasize how the condition \eqref{visco1} appears in statement (iii).}\end{footnote}
\begin{itemize}
\item [(ii)]
Let $u$ be the solution to the Navier-Stokes equation \eqref{NS_abs}.
We apply the It\^o formula to the functional $\|u(t)\|^q_H$, $q > 2$. 
Exploiting \eqref{B=0} and bearing in mind the previous computation for $q=2$, we infer 
\begin{align}
\label{sti3}
{\rm d}\|u(t)\|^q_H + q \nu \|u(t)\|_H^{q-2}\|u(t)\|^2_V\, {\rm d}t
&\le \frac{q(q-1)}{2}\|u(t)\|^{q-2}_H\|G(u(t))\|^2_{L_{HS}(U,H)}\, {\rm d}t 
\notag\\
& \qquad +q\|u(t)\|^{q-2}_H\langle u(t), G(u(t)){\rm d}W(t)\rangle + q\|u(t)\|_H^{q-2}\langle u(t), f\rangle\,{\rm d}t.
\end{align}
Using repeatedly the Young inequality we get that for any $\varepsilon>0$  and $\eta>0$ there exists a constant 
$C_1=C_1(\varepsilon, \eta, q, \nu)$ such that
\begin{align}
\label{sti_f_2}
q\|u\|_H^{q-2}\langle u, f\rangle
\le 
 q\|u\|_H^{q-2} \left(\eta  \nu   \|u\|^2_V + \frac{1}{\eta  \nu  }\|f\|^2_{V^*} \right)
\le 
 q\eta  \nu  \|u\|^{q-2}_H\|u\|^2_V+ \frac{\varepsilon}{2}\|u\|_H^q + C_1\|f\|_{V^*}^q. 
\end{align}
From (G2)(ii) and the Young inequality, for any  $\varepsilon>0$ there  exists a constant 
$C_2=C_2(\varepsilon, \gamma,q, K_2, \tilde{K}_2)$ such that
\begin{equation}
\label{sti4}
\frac{q(q-1)}{2}\|u\|^{q-2}_H\|G(u)\|^2_{L_{HS}(U,H)} \le C_2+ \frac{\varepsilon}{2} \|u\|_H^q.
\end{equation}
We choose $\eta=\frac 12$ and 
insert \eqref{sti_f_2} and \eqref{sti4} into \eqref{sti3}; we get
\begin{equation}
\label{sti3bis}
\begin{split}
{\rm d}\|u(t)\|^q_H &+ q \frac \nu 2 \|u(t)\|^{q-2}_H\|u(t)\|^2_V\, {\rm d}t- \varepsilon \|u(t)\|^q_H
\\
&\le C_2 + C_1\|f\|_{V^*}^q +q\|u(t)\|^{q-2}_H\langle u(t), G(u(t)){\rm d}W(t)\rangle.
\end{split}
\end{equation}
Since the stochastic integral is a martingale (thanks to \eqref{q_mom_sol}), 
taking the expected value in both sides of \eqref{sti3bis} and exploiting the Poincar\'e inequality  \eqref{lambda_1}, we find
\begin{align*}
\frac{{\rm d}}{{\rm d}t}\mathbb{E}\left[\|u(t)\|^q_H \right]\le C_2 + C_1\|f\|_{V^*}^q-\left(q \frac \nu 2 \lambda_1-\varepsilon\right)\mathbb{E}\left[\|u(t)\|^q_H\right].
 \end{align*}
 Choosing  $\varepsilon\le \frac{q \nu \lambda_1}{4}$ we get
 \[
 \frac{\rm d}{{\rm d}t} \mathbb{E}\left[\|u(t)\|^q_H \right]
 \le 
 -\frac{q \nu \lambda_1}{4} \mathbb{E}\left[\|u(t)\|^q_H \right]
+C_3
 \]
 where $C_3= C_3(\nu, \gamma, q, K_2, \tilde{K}_2, \|f\|_{V^*}) $. By Gronwall lemma we obtain
\[
\mathbb{E}\left[ \|u(t)\|^q_H\right]\le \|u_0\|^q_H e^{-\frac{q \nu \lambda_1}4 t}+ \frac4{q \nu \lambda_1} C_3
\]
and the thesis follows with $\bar C= \frac{q \nu \lambda_1}4$ and $\underline C=\frac4{q \nu \lambda_1} C_3$.  
 \item [(iii)]
 Notice at first that the condition on the viscosity coefficient ensures to have a non-empty set of admissible parameters $q$.
The proof follows then the lines of case (ii): we still have estimates \eqref{sti3} and \eqref{sti_f_2} but now, by means of the Young inequality, for  any arbitrary $\varepsilon>0$ we estimate as in \eqref{G(u)_est} and get
\begin{equation}
\label{sti5}
\frac{q(q-1)}{2}\|u(t)\|^{q-2}_H\|G(u)\|^2_{L_{HS}(U,H)} \le  C_4 + \frac{q(q-1)}{2}(1+\varepsilon)\tilde{K}_3^2 \|u(t)\|^q_H,
\end{equation}
with $C_4=C_4(\varepsilon, K_3,q)$.
Using \eqref{lambda_1}, \eqref{sti_f_2}, \eqref{sti5} and the fact that the stochastic integral is a martingale, by taking the expected value on both sides of \eqref{sti3}, we obtain for $C_1=C_1(\varepsilon, \eta, \nu, q)$
the same constant as above,
\begin{equation}
\frac{{\rm d}}{{\rm d}t}\mathbb{E}\left[\|u(t)\|_H^q\right] 
+\left(q\nu(1-\eta) \lambda_1
- \frac  \varepsilon 2 - \frac {1+\varepsilon}2 q(q-1)\tilde{K}_3^2\right) \mathbb{E}\left[\|u(t)\|^q_H\right] 
\le 
C_4+C_1\|f\|_{V^*}^q.
 \end{equation} 
Thanks to  assumption \eqref{visco1},  we can find $\varepsilon=\varepsilon(q,\nu,\lambda_1, \tilde{K}_3)>0$ and $\eta=\eta(q, \nu, \lambda_1)>0$ small enough such that $\bar C:=q\nu(1-\eta) \lambda_1
- \frac  \varepsilon 2 - \frac {1+\varepsilon}2q(q-1)\tilde{K}_3^2>0$
and the Gronwall lemma yields
\begin{equation*}
\mathbb{E}\left[ \|u(t)\|^q_H\right]\le \|u_0\|^q_H e^{-\bar C t}+ \frac{C_4+C_1\|f\|_{V^*}^q}{\bar C}\left(1-e^{-\bar Ct}\right),
\end{equation*}
and the thesis follows by taking $\underline{C}=\frac{C_4+C_1\|f\|_{V^*}^q}{\bar C}$.
\end{itemize}
\end{proof}

We now provide an a priori estimate on the solution to  the nudged equation \eqref{NS_abs_nud} with $u_0,v_0\in H$.

\begin{lemma}
\label{lem_v}
Assume (G1). 
\begin{itemize}
\item [(i)] If (G2)(i) holds, then for any $q \ge 2$ the solution $v=v(v_0, u_0)$ to the nudged equation \eqref{NS_abs_nud} satisfies 
\begin{equation}
\label{est_v}
\sup_{t \ge 0}\mathbb{E}\left[\|v(t)\|^q_H\right] \le C(1+ \|u_0\|^q_H+ \|v_0\|^q_H),
\end{equation} 
where $C=C(q,\nu, K_1, \lambda_1, \|f\|_{V^*})$.
\item [(ii)] If (G2)(ii) holds, then for any $q \ge 2$ the solution $v$ to the nudged equation \eqref{NS_abs_nud} satisfies  \eqref{est_v} with $C=C(q, \nu, K_2, \tilde{K}_2, \lambda_1, \gamma, \|f\|_{V^*})$.
\item[(iii)] If (G2)(iii) and  \eqref{visco1} hold, then  for any $q\in [2,  1+\frac{2\nu \lambda_1}{\tilde{K}_3^2})$
the solution $v$ to the nudged equation \eqref{NS_abs_nud} satisfies \eqref{est_v} with $C=C(q,\nu, K_3, \tilde{K}_3, \lambda_1, \|f\|_{V^*})$ .
\end{itemize}
\end{lemma}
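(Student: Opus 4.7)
The strategy is to mimic the proof of Lemma \ref{lem_A2}, applying It\^o's formula to $\|v(t)\|_H^q$ with $v$ solving the nudged equation \eqref{NS_abs_nud}, and then to absorb the extra nudging term $\lambda P_N(u-v)$ into the dissipation at the price of a source term controlled by $\mathbb{E}\|u(t)\|^q_H$. Since the latter is already uniformly bounded in $t$ by Lemma \ref{lem_A2}, Gronwall's inequality will yield the desired uniform-in-time bound \eqref{est_v}. The computation is performed in the three cases (i)--(iii) in parallel; only the treatment of $\|G(v)\|^2_{L_{HS}(U,H)}$ and the admissible range of $q$ differ, exactly as in Lemma \ref{lem_A2}.

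First I would apply It\^o's formula to $\|v(t)\|^q_H$ using \eqref{B=0}, obtaining (formally, up to a localization)
\begin{align*}
\mathrm{d}\|v(t)\|^q_H &+ q\nu\|v(t)\|_H^{q-2}\|v(t)\|^2_V\,\mathrm{d}t
\le \tfrac{q(q-1)}{2}\|v(t)\|_H^{q-2}\|G(v(t))\|^2_{L_{HS}(U,H)}\,\mathrm{d}t\\
&\quad+ q\|v(t)\|_H^{q-2}\langle v(t),f\rangle\,\mathrm{d}t
+ q\lambda\|v(t)\|_H^{q-2}\langle v(t),P_N(u(t)-v(t))\rangle\,\mathrm{d}t
+ \mathrm{d}M(t),
\end{align*}
where $M$ is the stochastic integral, a genuine martingale thanks to well-posedness of \eqref{NS_abs_nud} (cf.\ the remark after \eqref{NS_abs_nud}) together with the a priori bound \eqref{q_mom_sol} applied to $v$. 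The terms $q\|v\|_H^{q-2}\langle v,f\rangle$ and $\tfrac{q(q-1)}{2}\|v\|_H^{q-2}\|G(v)\|^2_{L_{HS}(U,H)}$ are handled exactly as in \eqref{sti_f_2}--\eqref{sti4} of Lemma \ref{lem_A2} (and analogously \eqref{sti5} under (G2)(iii)); the dissipation $q\nu\|v\|_H^{q-2}\|v\|^2_V$ combined with Poincar\'e \eqref{lambda_1} yields a term of the form $-\kappa\mathbb{E}\|v\|^q_H$ with $\kappa>0$ (and strictly positive under \eqref{visco1} in case (iii)).

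The only genuinely new term is the nudging contribution, which I would split as
\[
q\lambda\|v\|_H^{q-2}\langle v,P_N(u-v)\rangle
= q\lambda\|v\|_H^{q-2}\langle P_Nv,P_Nu\rangle - q\lambda\|v\|_H^{q-2}\|P_Nv\|^2_H.
\]
The second piece is non-positive and can be dropped. For the first I would use Cauchy--Schwarz and Young's inequality:
\[
q\lambda\|v\|_H^{q-2}\langle P_Nv,P_Nu\rangle
\le q\lambda\|v\|_H^{q-1}\|u\|_H
\le \varepsilon\|v\|^q_H + C(\varepsilon,q,\lambda)\|u\|^q_H,
\]
for an arbitrarily small $\varepsilon>0$ to be absorbed into the dissipation. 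Taking expectations and using the uniform bound from Lemma \ref{lem_A2},
\[
\mathbb{E}\|u(t)\|^q_H \le \underline C + \|u_0\|^q_H e^{-\bar Ct}\le \underline C + \|u_0\|^q_H\qquad \forall t\ge 0,
\]
I arrive at a differential inequality of the form
\[
\frac{\mathrm{d}}{\mathrm{d}t}\mathbb{E}\|v(t)\|^q_H + \kappa\,\mathbb{E}\|v(t)\|^q_H \le C\bigl(1+\|u_0\|^q_H+\|f\|_{V^*}^q\bigr),
\]
with $\kappa>0$ in all three regimes (in case (iii) this uses \eqref{visco1} and the restriction $q<1+2\nu\lambda_1/\tilde K_3^2$, so that $\varepsilon$ and the Young parameter $\eta$ can be chosen as in the proof of Lemma \ref{lem_A2}(iii)). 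A direct application of Gronwall's lemma then gives
\[
\mathbb{E}\|v(t)\|^q_H \le \|v_0\|^q_H e^{-\kappa t} + \frac{C}{\kappa}\bigl(1+\|u_0\|^q_H\bigr)\bigl(1-e^{-\kappa t}\bigr)
\le C'\bigl(1+\|u_0\|^q_H+\|v_0\|^q_H\bigr),
\]
uniformly in $t\ge 0$, which is precisely \eqref{est_v}.

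The only non-trivial issue is making sure that the sign and the smallness of the new terms do not destroy the coercivity $\kappa>0$ obtained in Lemma \ref{lem_A2}; since the nudging is a bounded perturbation in $L(H)$ of norm $\lambda$ and one full power of $\|v\|_H$ can always be traded against $\varepsilon\|v\|^q_H + C(\varepsilon)\|u\|^q_H$, this cost can be made arbitrarily small relative to the dissipation. Hence in cases (i)--(ii) no extra viscosity condition appears, while in case (iii) the existing requirement \eqref{visco1} suffices, matching the hypotheses of Lemma \ref{lem_A2}(iii).
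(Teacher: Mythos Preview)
Your proof is correct and follows essentially the same route as the paper: apply It\^o's formula to $\|v\|_H^q$, handle the noise and forcing terms exactly as in Lemma \ref{lem_A2}, estimate the nudging term by Cauchy--Schwarz/Young to obtain $\varepsilon\|v\|_H^q + C(\varepsilon,q,\lambda)\|u\|_H^q$, then invoke the uniform bound on $\mathbb{E}\|u(t)\|_H^q$ and Gronwall. The paper treats $q=2$ first and then $q>2$ separately, and bounds $\langle P_N(u-v),v\rangle\le \|u\|_H\|v\|_H-\|P_Nv\|_H^2$ before applying Young, but these are only cosmetic differences from your argument.
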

\begin{proof}
Let $v=v(v_0, u_0)$ be the solution to \eqref{NS_abs_nud}. Let $q=2$. We apply the It\^o formula to the functional $\|v(t)\|^2_H$. 
Exploiting \eqref{B=0} we infer, $\mathbb{P}$-a.s., for any $t\ge0$,
\begin{align*}
{\rm d}\|v(t)\|^2_H + 2 \nu \|v(t)\|^2_V\, {\rm d}t  = 
[\|G(v(t))\|^2_{L_{HS}(U,H)}  + 2 \langle v(t), f\rangle+ 2 \lambda\langle P_N(u(t)-v(t)), v(t)\rangle]\, {\rm d}t
+2 \langle v(t), G(v(t))\, {\rm d}W(t)\rangle.
\end{align*}
By means of the Cauchy-Schwartz and the Young inequalities, we estimate
\begin{equation*}\langle P_N(u-v), v\rangle \le \|u\|_H\|v\|_H-\|P_Nv\|^2_H \le \frac {1}{2\varepsilon} \|u\|^2_H + \frac {\varepsilon}{2} \|v\|^2_H,
\end{equation*}
for  any $\varepsilon>0$.

Proceeding as in Lemma \ref{lem_A1} and using  the Poincar\'e inequality  \eqref{lambda_1}, we infer 
\[
{\rm d}\|v(t)\|^2_H + a\lambda_1 \|v(t)\|_H^2\, {\rm d}t \le  \left[b + \frac{\lambda}{\varepsilon} \|u(t)\|_H^2 + \varepsilon \lambda \|v(t)\|^2_H \right]\, {\rm d}t + 2\langle v(t), G(v(t))\, {\rm d}W(t)\rangle,
\]
with $a$ and $b$ as in \eqref{a} and \eqref{b}, respectively. 
We choose $\varepsilon$ small enough  so that
$a\lambda_1-\varepsilon \lambda\ge \frac{a\lambda_1}{2}=:\bar a$.
Hence there exists a  positive constant $C_6$ such that 
\begin{equation}
\label{sti_v_1}
{\rm d}\|v(t)\|^2_H + \bar a \|v(t)\|_H^2\, {\rm d}t \le C_6 \left[1 + \|u(t)\|_H^2 \right]\, {\rm d}t + 2\langle v(t), G(v(t))\, {\rm d}W(t)\rangle,
\end{equation}
We now take the expected value on both sides of \eqref{sti_v_1}. Using the fact that the stochastic term is a martingale (thanks to \eqref{q_mom_sol}), exploiting the estimate $\sup_{t \ge 0}\mathbb{E}\left[\|u(t)\|^2_H\right] \le C(1+ \|u_0\|_H^2)$ that follows from estimate \eqref{sup_q_est} in Lemma \ref{lem_A2} we infer
\begin{equation*}
\frac{{\rm d}}{{\rm d}t}\mathbb{E}\left[\|v(t)\|_H^2\right] \le - \bar a \mathbb{E}\left[\|v(t)\|_H^2\right] +  C(1+ \|u_0\|^2_H).
\end{equation*}
The Gronwall lemma then yields
\begin{equation*}
\mathbb{E}\left[\|v(t)\|_H^2 \right] \le \|v_0\|^2_H e^{-\bar a t}+  \frac{C}{\bar a} (1+ \|u_0\|_H^2),
\qquad \forall t\ge 0
\end{equation*}
from which  the thesis follows.

For $q>2$ the proof follows the lines of the proof of Lemma \ref{lem_A2}. We apply the It\^o formula to the functional $\|v(t)\|^q_H$ and  obtain an equation  for $v$ which is of the form \eqref{sti3} where now it also appears the additional term 
\begin{equation}
\lambda q \|v(t)\|_H^{q-2}\langle v(t), P_N(u(t)-v(t))\rangle.
\end{equation}
By means of the Cauchy-Schwartz and the Young inequalities, we estimate, for any $\delta>0$,
\begin{align*}
\lambda q \|v\|_H^{q-2}\langle v, P_N(u-v)\rangle
\le \lambda q \|v\|_H^{q-2}\left( \frac{1}{2\delta}\|u\|_H^2+\frac{\delta}{2}\|v\|_H^2\right)\le \lambda q\delta\|v\|_H^q + C(q, \lambda, \delta)\|u\|_H^q.
\end{align*}
Bearing in mind the above estimate and arguing  as in the proof of Lemma \ref{lem_A2} the thesis follows.
\end{proof}

\subsection{Estimates in probability}
According to the different assumptions (G2)(i), (ii) or (iii) that we impose on the operator $G$,  we have different estimates in probability for the solution of the Navier-Stokes equation \eqref{NS_abs}. 
We collect them in the following two Propositions.

\begin{proposition}
\label{lem_exp}
Assume (G1) and (G2)(i). Let $u$ denote the corresponding solution of  the Navier-Stokes equation \eqref{NS_abs}. Then
\begin{equation}
\label{P_est_exp}
\mathbb{P}\left( \sup_{t\ge 0}\left[ \|u(t)\|^2_H + \frac{\nu}{2} \int_0^t \|u(s)\|^2_V \, {\rm d}s -\|u_0\|^2_H -\left( K_1^2+ \frac{\|f\|^2_{V^*}}{\nu}\right)t\right] \ge R\right) \le e^{-\frac{\nu\lambda_1}{8K_1^2}R},
\end{equation}
for all $R>0$.
\end{proposition}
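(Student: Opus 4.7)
The plan is to combine the energy identity from It\^o's formula with the exponential supermartingale (Doob) maximal inequality, in the classical spirit of Hairer--Mattingly style estimates for the 2D stochastic Navier--Stokes equations.

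First I would revisit identity \eqref{sti1} obtained in the proof of Lemma \ref{lem_A1}. Under (G2)(i) we have $\|G(u)\|_{L_{HS}(U,H)}^2 \le K_1^2$, and the forcing contribution is bounded via \eqref{sti_f} with $\eta=1$:
\[
 2\int_0^t \langle u(s),f\rangle\,ds \;\le\; \nu \int_0^t \|u(s)\|_V^2\,ds + \frac{\|f\|_{V^*}^2}{\nu}\,t.
\]
Plugging these bounds into \eqref{sti1} and rearranging yields, $\mathbb{P}$-a.s. for every $t\ge 0$,
\[
 Y(t):=\|u(t)\|_H^2 +\frac{\nu}{2}\int_0^t \|u(s)\|_V^2\,ds - \|u_0\|_H^2 - \Bigl(K_1^2+\tfrac{\|f\|_{V^*}^2}{\nu}\Bigr)t \;\le\; M(t) - \frac{\nu}{2}\int_0^t \|u(s)\|_V^2\,ds,
\]
where $M$ is the martingale defined in \eqref{M}.

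Next I would introduce, for a parameter $\theta>0$ to be chosen, the nonnegative exponential local martingale
\[
 Z(t):=\exp\!\Bigl(\theta M(t) - \tfrac{\theta^2}{2}[M](t)\Bigr),
\]
and bound its angle bracket using Cauchy--Schwarz in the Hilbert--Schmidt norm together with (G2)(i) and the Poincar\'e inequality \eqref{lambda_1}:
\[
 [M](t) \;\le\; 4\int_0^t \|u(s)\|_H^2\,\|G(u(s))\|_{L_{HS}(U,H)}^2\,ds \;\le\; \frac{4K_1^2}{\lambda_1}\int_0^t \|u(s)\|_V^2\,ds.
\]
Combining the two displays above gives
\[
 \theta Y(t) \;\le\; \log Z(t) + \Bigl(\tfrac{2\theta^2 K_1^2}{\lambda_1} - \tfrac{\theta\nu}{2}\Bigr)\!\int_0^t \|u(s)\|_V^2\,ds.
\]
Choosing $\theta=\frac{\nu\lambda_1}{8K_1^2}$ makes the bracketed coefficient equal to $-\tfrac{\theta\nu}{4}\le 0$, so $e^{\theta Y(t)} \le Z(t)$ for all $t\ge 0$.

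Finally I would apply Doob's maximal inequality for nonnegative supermartingales to $Z$ (which has $Z(0)=1$; a standard localisation via stopping times $\tau_n = \inf\{t\ge 0: [M](t)\ge n\}$ upgrades the local-martingale property of $Z$ to that of a supermartingale, after which one lets $n\to\infty$ by Fatou):
\[
 \mathbb{P}\Bigl(\sup_{t\ge 0} Y(t)\ge R\Bigr) \;\le\; \mathbb{P}\Bigl(\sup_{t\ge 0} Z(t)\ge e^{\theta R}\Bigr) \;\le\; e^{-\theta R} \;=\; e^{-\frac{\nu\lambda_1}{8K_1^2}R},
\]
which is exactly \eqref{P_est_exp}. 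The only mildly technical point is this localisation argument ensuring $Z$ is a genuine supermartingale; the moment bound \eqref{q_mom_sol} guarantees $\tau_n\to\infty$ and lets the maximal inequality pass to the limit.
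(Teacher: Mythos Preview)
Your proof is correct and follows essentially the same route as the paper: both derive the pathwise bound $Y(t)\le M(t)-\frac{\nu}{2}\int_0^t\|u(s)\|_V^2\,ds\le M(t)-\frac{\nu\lambda_1}{8K_1^2}[M](t)$ from \eqref{sti1}, \eqref{sti_f} with $\eta=1$, (G2)(i), and Poincar\'e, and then apply the exponential martingale inequality $\mathbb{P}(\sup_{t\ge0}[M(t)-\alpha[M](t)]\ge R)\le e^{-\alpha R}$ with $\alpha=\frac{\nu\lambda_1}{8K_1^2}$. The only cosmetic difference is that the paper invokes this inequality as a black box, whereas you unpack its proof via the exponential supermartingale $Z(t)$ and Doob's maximal inequality.
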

\begin{proof}
From estimates \eqref{sti_gen}, \eqref{a} and \eqref{b} we get, $\mathbb{P}$-a.s., for all $t\ge0$, 
\begin{align}
\|u(t)\|^2_H +\nu\int_0^t \|u(s)\|^2_V\, {\rm d}s  \le  \|u_0\|_H^2+ \left(K_1^2+ \frac{\|f\|^2_{V^*}}{\nu} \right) t + M(t),
\end{align}
where $M$ is defined in \eqref{M} and its quadratic variation is estimated as
\begin{equation}
[M](t) \le 
4K_1^2\int_0^t \|u(s)\|_H^2\,{\rm d}s
\underset{\text{by  }\eqref{lambda_1}} {\le}
\frac{4K_1^2}{\lambda_1} \int_0^t \|u(s)\|_V^2\,{\rm d}s.
\end{equation}
Hence
\[
\|u(t)\|^2_H +\frac{\nu}{2}\int_0^t \|u(s)\|^2_V\, {\rm d}s  -  \|u_0\|_H^2- \left(K_1^2+ \frac{\|f\|^2_{V^*}}{\nu} \right) t 
\le 
M(t) -\frac{\nu}{2}\int_0^t \|u(s)\|^2_V\, {\rm d}s
\le 
M(t) - \frac{\nu\lambda_1}{8K_1^2}[M](t).
\]
The thesis is  obtained from the exponential martingale inequality
\begin{equation*}
\mathbb{P}\left( \sup_{t \ge 0} \left[M(t)-\alpha [M](t)\right] \ge R\right) \le e^{-\alpha R}, \quad \forall \ R, \alpha>0
\end{equation*}
with $\alpha= \frac{\nu\lambda_1}{8K_1^2}$.
\end{proof}

\begin{proposition}
\label{lem_pol}
Assume (G1). 
Let $u$ denote the solution of  the Navier-Stokes equation \eqref{NS_abs}. 
Set $C_b=\min(1+b,2)$, where $b$ is defined in \eqref{b}.
\begin{enumerate}
\item[1.]  If  (G2)(ii) holds, then there exists a positive constant $C=C(\lambda_1,q,\nu, K_2, \tilde{K_2},\gamma, \|f\|_{V^*})$ such
 that for any arbitrary $q> 2$ 
\begin{align}
\label{P_est_ii}
\mathbb{P}\left( \sup_{t \ge T} \left[ \|u(t)\|^2_H+\nu \int_0^t \|u(s)\|^2_V\, {\rm d}s -\|u_0\|^2_H-C_b (t+1)\right] \ge R\right)
\le \frac{C(1+ \|u_0\|_H^{2q})}{(T+R)^{\frac q2-1}},
\end{align}
for all $T\ge 0$, $R>0$.
\item[2.]
If  (G2)(iii) holds and 
\begin{equation}\tag{\ref{condizione32}}
\nu > \frac{3\tilde{K}_3^2}{2\lambda_1}, 
\end{equation}
then there exists a positive constant $C=C(\lambda_1,q,\nu, K_3, \tilde{K_3}, \|f\|_{V^*})$ such
 that for any arbitrary $q\in \left(2,\frac 12 + \frac{\nu \lambda_1}{\tilde{K}_3^2}\right)$  
\begin{align}
\label{P_est_iii}
\mathbb{P}\left( \sup_{t \ge T} \left[ \|u(t)\|^2_H+ (\nu-\tfrac {\tilde K_3^2}{2\lambda_1})\int_0^t \|u(s)\|^2_V\, {\rm d}s -\|u_0\|^2_H-C_b (t+1)\right] \ge R\right)
\le \frac{C(1+ \|u_0\|_H^{2q})}{(T+R)^{\frac q2-1}},\end{align}
for all $T\ge 0$, $R>0$.
\end{enumerate}
\end{proposition}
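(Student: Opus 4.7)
My plan is to combine the pathwise energy estimate derived in the proof of Lemma~\ref{lem_A1} with a Doob-BDG argument on the partition of $[T,\infty)$ into unit intervals, closing with the uniform moment bounds of Lemma~\ref{lem_A2}. Recall that the proof of Lemma~\ref{lem_A1} produces, $\mathbb P$-almost surely,
\begin{equation*}
\|u(t)\|^2_H + a\int_0^t \|u(s)\|^2_V\,{\rm d}s \le \|u_0\|^2_H + bt + M(t), \qquad t \ge 0,
\end{equation*}
where $a$ is the coefficient appearing on the left-hand side of \eqref{P_est_ii} or \eqref{P_est_iii}, $b$ is as in \eqref{b}, and $M(t) = 2\int_0^t \langle u(s), G(u(s))\,{\rm d}W(s)\rangle$ is a martingale with $[M](t) \le 4\int_0^t \|u(s)\|^2_H \|G(u(s))\|^2_{L_{HS}(U,H)}\,{\rm d}s$. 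Interpreting $C_b$ so that $C_b(t+1)\ge bt+c_0(t+1)$ for some $c_0>0$ (any numerical discrepancy with the definition $C_b=\min(1+b,2)$ being absorbed into the final prefactor $C$), the event inside the probability is contained in $\{\sup_{t\ge T}[M(t) - c_0(t+1)] \ge R\}$.

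Next I would partition $[T,\infty) = \bigcup_{k\ge 0} [T+k,T+k+1)$ and bound
\begin{equation*}
\mathbb{P}\Big(\sup_{t\ge T}[M(t)-c_0(t+1)] \ge R\Big) \le \sum_{k=0}^{\infty} \mathbb{P}\Big(\sup_{t\le T+k+1}|M(t)| \ge R + c_0(T+k)\Big).
\end{equation*}
Each summand is then estimated through Chebyshev's inequality at exponent $q$ combined with the Burkholder-Davis-Gundy inequality,
\begin{equation*}
\mathbb{P}\Big(\sup_{t\le T+k+1}|M(t)| \ge A\Big) \le C_q\,A^{-q}\,\mathbb{E}\bigl[[M](T+k+1)^{q/2}\bigr].
\end{equation*}
The moment on the right is estimated using (G2)(ii), which yields $\|u\|^2_H\|G(u)\|^2_{L_{HS}(U,H)} \le C(1+\|u\|^{2+2\gamma}_H)$, or (G2)(iii), which yields $\|u\|^2_H\|G(u)\|^2_{L_{HS}(U,H)} \le C(1+\|u\|^{4}_H)$, combined with Jensen's inequality in the time variable and the uniform moment control of Lemma~\ref{lem_A2}, to obtain
\begin{equation*}
\mathbb{E}\bigl[[M](t)^{q/2}\bigr] \le C\,t^{q/2}(1+\|u_0\|^{2q}_H).
\end{equation*}
The elementary bound $(T+k+1)^{q/2}(R+c_0(T+k))^{-q} \le C(T+R+k)^{-q/2}$ together with the summability $\sum_{k\ge 0}(T+R+k)^{-q/2} \le C(T+R)^{1-q/2}$ (valid since $q>2$) yields exactly the right-hand side of \eqref{P_est_ii} and \eqref{P_est_iii}.

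The main obstacle is calibrating the exponent $q$ against the viscosity/noise ratio in case (G2)(iii). The BDG step requires $\sup_{s\ge 0}\mathbb{E}[\|u(s)\|^{2q}_H]<\infty$, which by Lemma~\ref{lem_A2}(iii) is available exactly when $2q < 1 + \tfrac{2\nu\lambda_1}{\tilde K_3^2}$, i.e.\ $q < \tfrac12 + \tfrac{\nu\lambda_1}{\tilde K_3^2}$, and this is precisely the admissible range in \eqref{P_est_iii}. For this range to contain some value $q>2$ (required both to apply BDG meaningfully and to ensure convergence of the series in $k$) one needs $\tfrac{\nu\lambda_1}{\tilde K_3^2}>\tfrac32$, which is exactly the hypothesis \eqref{condizione32}. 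Under (G2)(ii), by contrast, Lemma~\ref{lem_A2}(ii) grants the moment bound for every $q\ge 2$, so \eqref{P_est_ii} holds for arbitrary $q>2$ with no additional restriction on $\nu$.
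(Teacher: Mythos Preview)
Your approach is essentially identical to the paper's: reduce to a martingale deviation via the pathwise energy inequality \eqref{sti_gen}, partition $[T,\infty)$ into unit intervals, apply Chebyshev together with Burkholder--Davis--Gundy on each piece, control $\mathbb{E}\bigl[[M](t)^{q/2}\bigr]$ through the moment bounds of Lemma~\ref{lem_A2}, and sum the resulting series (which converges precisely when $q>2$, forcing \eqref{condizione32} in case (G2)(iii)). One small correction: your parenthetical claim that a discrepancy with $C_b=\min(1+b,2)$ can be ``absorbed into the final prefactor $C$'' is not right---if $C_b<b$ the drift $bt - C_b(t+1)$ diverges and the event has probability one---but this is a typo in the statement (it should read $C_b=\max(1+b,2)$), and the paper's own proof glosses over the same point.
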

\begin{proof}
When $G$ is unbounded, we proceed differently than in the bounded case, since the quadratic variation of the stochastic integral in the It\^o formula \eqref{sti1} has a growth with a power larger than 2 and thus cannot be balanced by the integral $\int_0^t \|u(s)\|^2_V\, {\rm d}s$ appearing in the l.h.s.

We start from estimate  \eqref{sti_gen} with $a$, $b$ as in \eqref{a}, \eqref{b} respectively and set 
$C_b = \min(b+1,2)$.
Therefore for any $R,T>0$,  we have
\begin{align}
\label{Mart1}
\mathbb{P} \left( \sup_{t \ge T}\left[ \|u(t)\|^2_H+a\int_0^t \|u(s)\|^2_V\, {\rm d}s -\|u_0\|^2_H-C_b(t+1)\right]\ge R\right) 
\le 
\mathbb{P}\left( \sup_{t \ge T} \left[ M(t)-t-2\right]\ge R\right).
\end{align}

We observe that, for any $T\ge 0$, $R > 0$,
\begin{equation}
\label{Mart2}
\left\{\sup_{t \ge T}\left[  M(t)-t-2\right] \ge R \right\} \subset \bigcup_{m \ge \lfloor T\rfloor}\left\{ \sup_{t \in [m, m+1)}\left[ M(t)-t-2\right] \ge R\right\},
\end{equation}
where  $\lfloor T\rfloor$ denotes the largest integer less than or equal to $T$. On the other hand, notice that for $R>0$ and any $m \ge 0$ 
\begin{equation}
\label{Mart3}
\left\{ \sup_{t \in [m, m+1)}\left[ M(t)-t-2\right] \ge R\right\} \subset \left\{ M^*(m+1) \ge R+m+2\right\},
\end{equation} 
where we adopt the notation $M^*(t):= \sup_{s \in [0,t] }| M(s)|$.
We will exploit the Burkholder-Davis-Gundy 
\[
\mathbb{E}\left[M^*(t)^q\right] \lesssim_q  \mathbb{E}\left[ \left[M \right](t)^{\frac q2} 
\right]
\]
 in order to obtain a suitable estimate for \eqref{Mart1} from \eqref{Mart2} and \eqref{Mart3}. 

By means of the Young inequality, under either (G2)(ii) or (G2)(iii), we can estimate the quadratic variation $[M](t)$ as follows
\begin{align*}
[M](t)
&\le 4  \int_0^t \|u(s)\|^2_H\|G(u(s))\|^2_{L_{HS}(U,H)}\, {\rm d}s
\le C_1\int_0^t \left(1+ \|u(s)\|^4_H \right)\, {\rm d}s,
\end{align*}
where 
\begin{equation*}
C_1
=
\begin{cases}
C_1(K_2, \tilde{K_2}, \gamma) & \text{under (G2)(ii)}
\\
C_1(K_3, \tilde K_3) & \text{under (G2)(iii)}
\end{cases}
\end{equation*}
 is a positive constant (see \eqref{G(u)_est}).
\\
Thus, from the Burkholder-Davis-Gundy and the H\"older inequalities and \eqref{sup_q_est}
we find that for all $q \ge 2$,
\begin{align}
\label{est_quadr_var}
\mathbb{E} \left[M^*(t) \right]^q
&\lesssim_q\mathbb{E}\left[ [M](t)^{\frac q2}\right]
\lesssim_{q, C_1} \mathbb{E}\left[\left(\int_0^t  \left(1+\|u(s)\|^{4}_H\right)\, {\rm d}s\right)^{\frac q2}\right]
\\\notag
&\lesssim_{q,C_1}t^{\frac{q-2}{2}} \mathbb{E}\left[\int_0^t  \left(1+\|u(s)\|^{2q}_H\right)\, {\rm d}s\right]
\le C(t+1)^{\frac{q}{2}}\left(1+\|u_0\|^{2q}_H \right),
\end{align} 
where 
\begin{equation}
\label{C}
C=
\begin{cases}
C(K_2, \tilde K_2, \nu, \lambda_1, q, \gamma, \|f\|_{V^*}) & \text{under (G2)(ii)}
\\
C(K_3, \tilde K_3, \nu, \lambda_1, q, \|f\|_{V^*}) & \text{under (G2)(iii)},\end{cases}
\end{equation}
is a positive constant.
We have to require $2q< 1+\frac{ 2 \nu\lambda_1}{\tilde{K}_3^2}$
 in order to use \eqref{sup_q_est} when Assumption (G2)(iii) is in force.
 
From \eqref{Mart2}, \eqref{Mart3}, the Chebychev inequality and \eqref{est_quadr_var}, where the constant $C$ is as in \eqref{C}, we have
\begin{multline*}
\mathbb{P}\left(\sup_{t \ge T}\left[ M(t)-t-2\right] \ge R \right) \le \sum_{m \ge  \lfloor T\rfloor}\mathbb{P}\Big( M^*(m+1) \ge R+m+2\Big)
\le \sum_{m \ge  \lfloor T\rfloor}\frac{\mathbb{E}\left[M^*(m+1)^q\right]} {(R+m+2)^q}
\\
\le C(1+ \|u_0\|_H^{2q})\sum_{m \ge  \lfloor T\rfloor}\frac{(m+2)^{\frac q2}} {(R+m+2)^q}
\le C(1+ \|u_0\|_H^{2q})\sum_{m \ge  \lfloor T\rfloor}\frac{1} {(R+m+2)^{\frac q2}}
\end{multline*}
The latter series is convergent when $q > 2$ and thus we obtain
\[
\mathbb{P}\left(\sup_{t \ge T}\left[M(t)-t-2\right] \ge R \right)
\lesssim
  \frac{1+ \|u_0\|_H^{2q}}{(T+R)^{\frac q2-1}}.
\]
Under Assumption (G2)(iii) the condition $q>2$ requires that $2<\frac 12 +\frac{\nu\lambda_1}{\tilde{K}_3^2} $, which is \eqref{condizione32}.
Keeping in mind \eqref{Mart1},
the estimates \eqref{P_est_ii} and \eqref{P_est_iii} follow.
\end{proof}

\section{Proof of Lemma \ref{techn_lemm}.}
\label{dim_lem}
\begin{proof}
Set $r:=u_1-u_2$; this difference satisfies
\begin{equation*}
{\rm d}r(t)+ \left[ \nu Ar(t)+B(r(t),u_1(t))+B(u_2(t),r(t))\right]\,{\rm d}t=(G(u_1(t))-G(u_2(t)))\,{\rm d}W(t)
\end{equation*}
with $r(0)=x-y$.
We follow an idea of \cite{Sch1997} and we apply the It\^o formula to ${\rm d}\left( e^{-\int_0^t \psi(s)\, {\rm d}s}\|r(t)\|^2_H\right)$, choosing $\psi$ as
\begin{equation*}
\psi(t):= L_G^2-\lambda_1 \nu+\frac{1}{\nu} \|u_1(t)\|^2_V,
\end{equation*}
where we recall that $L_G$ is the constant appearing in Assumption (G1) and $\lambda_1$ is the first eigenvalue of the Laplace operator.
We recall that $u_1 \in L^2(0,T,V)$ $\widetilde{\mathbb{P}}$-a.s., so $\psi \in L^1(0,T)$ $\widetilde{\mathbb{P}}$-a.s..
We have 
\begin{equation*}
{\rm d}\left( e^{-\int_0^t \psi(s)\, {\rm d}s}\|r(t)\|^2_H\right)
=
-\psi(t)e^{-\int_0^t \psi(s)\, {\rm d}s}\|r(t)\|^2_H +e^{-\int_0^t \psi(s)\, {\rm d}s}\, {\rm d}\|r(t)\|^2_H.
\end{equation*}
By similar computations as the ones done in the proof of Theorem \ref{FP_thm} one obtains 
\begin{equation*}
{\rm d}\|r(t)\|^2_H \le \left( L_G^2-\lambda_1 \nu + \frac{1}{\nu}\|u_1(t)\|^2_V\right) \|r(t)\|_H^2 + \langle G(u_1(t))-G(u_2(s)), r(t)\, {\rm d}W(t)\rangle.
\end{equation*}
Thus 
\begin{equation}
\label{sti_techn_1}
{\rm d}\left( e^{-\int_0^t \psi(s)\, {\rm d}s}\|r(t)\|^2_H\right)
\le e^{-\int_0^t \psi(s)\, {\rm d}s} \langle G(u_1(t))-G(u_2(s)), r(t)\, {\rm d}W(t)\rangle.\end{equation}
The r.h.s. is a martingale, in fact define
\begin{equation*}
N(t):=e^{-\int_0^t \psi(s)\, {\rm d}s} \langle G(u_1(t))-G(u_2(s)), r(t)\, {\rm d}W(t)\rangle.
\end{equation*}
Then, Assumption (G1) yields 
\begin{align*}
\widetilde{\mathbb{E}}[N(t)^2]
&\le \widetilde{\mathbb{E}}\left[ \int_0^t e^{-2\int_0^t \psi(s)\, {\rm d}s}\|r(s)\|^2_H\|G(u_1(s))-G(u_2(s))\|^2_{L_{HS}(U,H)}\, {\rm d} s\right]
\\
&\le L_G^2e^{2\lambda_1 \nu}\widetilde{\mathbb{E}}\left[\int_0^t\|r(s)\|^4_H\, {\rm d} s\right]
\end{align*}
which is finite thanks to \eqref{q_mom_sol}.
Therefore, by integrating \eqref{sti_techn_1} over $[0,t]$ and taking the expected value on both sides, we get
\begin{equation*}
\widetilde{\mathbb{E}}\left[e^{-\int_0^t \psi(s)\, {\rm d}s}\|r(t)\|^2_H \right] \le \|x-y\|^2_H
\end{equation*}
and this concludes the proof.
\end{proof}



\section*{Acknowledgements}

The authors are members of Gruppo Nazionale per l’Analisi Matematica, la Probabilità e le loro Applicazioni (GNAMPA) of the Istituto Nazionale di Alta Matematica (INdAM), and gratefully acknowledge financial support through the project 
CUP$-$E53C22001930001. 

\addcontentsline{toc}{section}{\numberline{} Bibliography} 
\bibliographystyle{abbrv}

\end{document}